\newcommand{\C}{\mathbb{C}}
\newcommand{\I}{\mathbb{I}}
\newcommand{\N}{\mathbb{N}}
\newcommand{\R}{\mathbb{R}}
\newcommand{\T}{\mathbb{T}}
\newcommand{\Z}{\mathbb{Z}}
\newcommand{\Xb}{\textbf{\upshape X}}
\newcommand{\Kc}{\mathcal{K}}
\newcommand{\Lc}{\mathcal{L}}
\newcommand{\from}{\colon}
\DeclareMathOperator{\im}{im}
\DeclareMathOperator{\op}{op}
\DeclareMathOperator{\ess}{ess}
\DeclareMathOperator{\BO}{BO}
\DeclareMathOperator{\clos}{clos}
\DeclareMathOperator{\conv}{conv}
\DeclareMathOperator{\Real}{Re}
\DeclareMathOperator{\Imag}{Im}
\DeclareMathOperator{\spec}{sp}
\DeclareMathOperator{\spess}{\spec_{\ess}}
\DeclareMathOperator{\sign}{sign}
\DeclareMathOperator{\PsiE}{\Psi \mathrm{E}}
\renewcommand{\Re}{\Real}
\renewcommand{\Im}{\Imag}
\renewcommand{\epsilon}{\varepsilon}
\renewcommand{\phi}{\varphi}
\renewcommand{\theta}{\vartheta}
\providecommand{\scpr}[2]{\left\langle #1, #2 \right\rangle}
\renewcommand{\sp}{\scpr}
\providecommand{\abs}[1]{\left\lvert#1\right\rvert}
\providecommand{\norm}[1]{\left\lVert#1\right\rVert}
\providecommand{\set}[1]{\left\{ #1\right\}}
\newtheorem{thm}{Theorem}
\newtheorem{lem}[thm]{Lemma}
\newtheorem{prop}[thm]{Proposition}
\newtheorem{cor}[thm]{Corollary}
\theoremstyle{definition}
\newtheorem{defn}[thm]{Definition}
\newtheorem{ex}[thm]{Example}
\begin{document}

\title{On the Spectrum and Numerical Range \\ of Tridiagonal Random Operators}
\author{Raffael Hagger\footnote{raffael.hagger@tuhh.de}}
\maketitle

\begin{abstract}
In this paper we derive an explicit formula for the numerical range of (non-self-adjoint) tridiagonal random operators. As a corollary we obtain that the numerical range of such an operator is always the convex hull of its spectrum, this (surprisingly) holding whether or not the random operator is normal. Furthermore, we introduce a method to compute numerical ranges of (not necessarily random) tridiagonal operators that is based on the Schur test. In a somewhat combinatorial approach we use this method to compute the numerical range of the square of the (generalized) Feinberg-Zee random hopping matrix to obtain an improved upper bound to the spectrum. In particular, we show that the spectrum of the Feinberg-Zee random hopping matrix is not convex.
\end{abstract}

\textit{2010 Mathematics Subject Classification:} Primary 47B80; Secondary 47A10, 47A12, 47B36.

\textit{Keywords:} random operator, spectrum, numerical range, tridiagonal, pseudo-ergodic

\section{Introduction}

Since the introduction of random operators to nuclear physics by Eugene Wigner \cite{Wigner} in 1955, there is an ongoing interest in random quantum systems, the most famous example probably being the Anderson model \cite{Anderson}. In the last twenty years also non-self-adjoint random systems were extensively studied, starting with the work of Hatano and Nelson \cite{HaNe}. Compared to self-adjoint random operators, non-self-adjoint random operators give rise to many new phenomena like complex spectra, (non-trivial) pseudospectra, etc. In return, the study of non-self-adjoint operators requires new techniques as the standard methods from spectral theory are often not available. 

We start with some limit operator and approximation results for numerical ranges of random operators. We then focus on the physically most relevant case of tridiagonal operators. In particular, we prove an easy formula for the (closure of the) numerical range of tridiagonal random operators (Theorem \ref{ntridiag}). As a corollary we get that the (closure of the) numerical range is equal to the convex hull of the spectrum for these operators, just like for self-adjoint or normal operators. Theorem \ref{ntridiag} thus provides the best possible convex upper bound to the spectrum of a random tridiagonal operator. In particular, it improves the upper bound given in \cite{ChaDa} for a particular class of random tridiagonal operators. The authors of \cite{ChaDa} considered the following tridiagonal random operator:
\[\begin{pmatrix} \ddots & \ddots & & & & \\ \ddots & 0 & 1 & & & \\ & c_{-1} & 0 & 1 & & \\ & & c_0 & 0 & 1 & \\ & & & c_1 & 0 & \smash{\ddots} \\ & & & & \ddots & \ddots \end{pmatrix},\]
where $(c_j)_{j \in \Z}$ is a sequence of i.i.d.~random variables taking values in $\set{\pm \sigma}$ and $\sigma \in (0,1]$. The special case $\sigma = 1$ was already considered earlier (e.g.~in \cite{ChaChoLi}, \cite{ChaChoLi2}, \cite{FeZee}, \cite{HoOrZee}) and is called the Feinberg-Zee random hopping matrix. It is also the main topic of \cite{Ha} and \cite{Ha2}, where the symmetries of the spectrum and the connections to the spectra of finite sections of this operator are studied, respectively.

Theorem \ref{ntridiag} also determines the spectrum completely in some cases. Consider for example the Hatano-Nelson operator
\[A = \begin{pmatrix} \ddots & \ddots & & & & \\ \ddots & v_{-1} & e^g & & & \\ & e^{-g} & v_0 & e^g & & \\ & & e^{-g} & v_1 & e^g & \\ & & & e^{-g} & v_2 & \smash{\ddots} \\ & & & & \ddots & \ddots \end{pmatrix},\]
where $(v_j)_{j \in \Z}$ is a sequence of i.i.d.~random variables taking values in some bounded set $V \subset \R$ and $g > 0$ is a constant, and assume that $V$ is an interval of length at least $4\cosh(g)$. Then Theorem \ref{ntridiag} implies that the spectrum of $A$ is equal to the numerical range, which is given by the union of the ellipses $E_v := \set{e^{g+i\theta} + v + e^{-(g+i\theta)} : \theta \in [0,2\pi)}$, $v \in V$.

In Section \ref{method} we introduce a method to compute numerical ranges of arbitrary (not necessarily random) tridiagonal operators that is based on the Schur test. For the (generalized) Feinberg-Zee random hopping matrix as studied in \cite{ChaDa} and mentioned above, we use this method to compute the numerical range of the square of the random operator, which will provide an improved upper bound to the spectrum. This is related to the concept of higher order numerical ranges as used in \cite{Davies0.5} and \cite{Martinez} for example. 

In the last part we provide explicit formulas for the numerical range and the numerical range of the square in the case of the (generalized) Feinberg-Zee random hopping matrix in order to show that this new upper bound is indeed a tighter bound to the spectrum than the numerical range. In particular, we confirm and improve the numerical results obtained in \cite{ChaChoLi2} concerning the question whether the spectrum is equal to the (closure of the) numerical range in the case $\sigma = 1$. More precisely, we show that the spectrum is a proper subset of the (closure of the) numerical range and not convex.

\subsection{Notation}

Throughout this paper we consider the Hilbert space $\Xb := \ell^2(\Z)$ and its closed subspace $\ell^2(\N)$. The set of all bounded linear operators $\Xb \to \Xb$ will be denoted by $\Lc(\Xb)$. The set of all compact operators $\Xb \to \Xb$ will be denoted by $\Kc(\Xb)$.

We want to think of $\Lc(\Xb)$ as a space of infinite matrices. Operators in $\Lc(\Xb)$ are identified with infinite matrices in the following way. Let $\left\langle \cdot,\cdot \right\rangle$ be a scalar product defined on $\Xb$ and let $\left\{e_i\right\}_{i \in \Z}$ be a corresponding orthonormal basis, i.e.~$\left\langle e_i,e_j \right\rangle = \delta_{i,j}$ for all $i,j \in \Z$. We will keep this orthonormal basis fixed for the rest of the paper. The subsequent notions may depend on the chosen basis.

Let $A \in \Lc(\Xb)$. Then the entry $A_{i,j}$ is given by $\left\langle Ae_j,e_i \right\rangle$. The matrix $\left(A_{i,j}\right)_{i,j \in \Z}$, in the following again denoted by $A$, acts on a vector $v \in \Xb$ in the usual way. If $v_j$ is the $j$-th component of $v$, then the $i$-th component of $Av$ is given by $\sum\limits_{j \in \Z} A_{i,j}v_j$. This identification of operators and matrices on $\Xb$ is an isomorphism (see e.g.~\cite[Section 1.3.5]{Marko}). Therefore we do not distinguish between operators and matrices. As usual, the vector $(A_{i,j})_{j \in \Z} \in \Xb$ is called the $i$-th row and $(A_{i,j})_{i \in \Z} \in \Xb$ is called the $j$-th column of $A$. For $k \in \Z$ the vector $(A_{i+k,i})_{i \in \Z} \in \Xb$ is called the $k$-th diagonal of $A$ or the diagonal with index $k$. $A$ is called a band operator if only a finite number of diagonals are non-zero. The set of all band operators will be denoted by $\BO(\Xb)$. Furthermore, we call $A$ tridiagonal if all diagonals with index $k \notin \set{-1,0,1}$ vanish.

We consider the following subclasses. Let $n \leq m$ be integers and let $U_n, \ldots, U_m \subset \C$ be non-empty compact sets. Then we define
\[M(U_n, \ldots, U_m) = \left\{A \in \Lc(\Xb) : A_{i+k,i} \in U_k \text{ if } n \leq k \leq m \text{ and } A_{i+k,i} = 0 \text{ otherwise}\right\},\]
i.e.~the $k$-th diagonal only contains elements from $U_k$. Similarly, we denote the set of all finite square matrices with this property by $M_{fin}(U_n, \ldots, U_m)$. If $A \in M(U_n, \ldots, U_m)$ satisfies $A_{i,j} = A_{i+p,j+p}$ for all $i,j \in \Z$ and some $p \geq 1$, then $A$ is called $p$-periodic and the set of all of these operators will be denoted by $M_{per,p}(U_n, \ldots, U_m)$. In the special case $p = 1$ these operators are usually called Laurent operators and therefore we additionally define $L(U_n, \ldots, U_m) := M_{per,1}(U_n, \ldots, U_m)$. The set of all periodic operators will be denoted by $M_{per}(U_n, \ldots, U_m)$.

$A \in M(U_n, \ldots, U_m)$ is called a random operator if for $k \in \left\{n, \ldots, m\right\}$ the entries along the $k$-th diagonal of $A$ are chosen randomly (say i.i.d.)~w.r.t.~some probability measure on $U_k$. Finally, pseudo-ergodic operators are defined as follows. Let $P_{k,l}$ be the orthogonal projection onto $\text{span}\left\{e_k, \ldots, e_l\right\}$. Then $A \in M(U_n, \ldots, U_m)$ is called pseudo-ergodic if for all $\epsilon > 0$ and all $B \in M_{fin}(U_n, \ldots, U_m)$ there exist $k$ and $l$ such that $\left\|P_{k,l}AP_{k,l} - B\right\| \leq \epsilon$. In other words, every finite square matrix of this particular kind can be found up to epsilon when moving along the diagonal of a pseudo-ergodic operator. Note that if all of the $U_k$ are discrete, one can simply put $\epsilon = 0$ in the definition. At first sight, it is not easy to see why one may want to consider operators of this type, but in fact, pseudo-ergodic operators are closely related to random operators. Under some reasonable conditions on the probability measure (see e.g.~\cite[Section 5.5.3]{HabilMarko}), one can show that a random operator is pseudo-ergodic almost surely. Therefore the definition of pseudo-ergodic operators is a nice circumvention of probabilistic arguments when dealing with random operators. We will make use of this fact for the rest of the paper and just mention here that every statement that holds for a pseudo-ergodic operator, holds for a random operator almost surely. The set of pseudo-ergodic operators is denoted by $\Psi E(U_n, \ldots, U_m)$. The notion of pseudo-ergodic operators goes back to Davies \cite{Davies}.

\subsection{Limit Operator Techniques}

Limit operators are an important tool in the study of band operators. For $k \in \mathbb{Z}$ define the $k$-th shift operator $V_k$ by $(V_k x)_j = x_{j-k}$ for all $x \in \Xb$. Let $A \in \Lc(\Xb)$ and let $h := (h_m)_{m \in \N}$ be a sequence of integers tending to infinity such that the strong limit\footnote{Sometimes different and more sophisticated notions of convergence are used to define limit operators. In the case of band operators on $\ell^2(\Z)$ all these notions coincide (see e.g.~\cite[Section 1.6.3]{Marko} or \cite[Example 4.6]{ChaLi}).} $A_h := \lim\limits_{m \to \infty} V_{-h_m}AV_{h_m}$ exists. Then $A_h$ is called a limit operator of $A$. The set of all limit operators is called the operator spectrum of $A$ and denoted by $\sigma^{\op}(A)$. Here are some basic properties of limit operators that we will need in the following (see e.g.~\cite[Proposition 3.4, Corollary 3.24]{Marko}):

\begin{prop} \label{basic}
Let $A,B \in \BO(\Xb)$ and let $h := (h_m)_{m \in \N}$ be a sequence of integers tending to infinity. Then the following statements hold:
\begin{itemize}
	\item There exists a subsequence $g := (g_m)_{m \in \N}$ of $h$ such that $A_g$ and $B_g$ exist.
  \item If $A_h$ and $B_h$ exist, so does $(A + B)_h$ and $(A+B)_h = A_h + B_h$.
  \item If $A_h$ and $B_h$ exist, so does $(AB)_h$ and $(AB)_h = A_h B_h$.
  \item If $A_h$ exists, so does $(A^*)_h$ and $(A^*)_h = (A_h)^*$.
  \item If $A_h$ exists, then $\left\|A_h\right\| \leq \left\|A\right\|$.
  \item If $A \in \Kc(\Xb)$, then $A_h = 0$.
\end{itemize}
\end{prop}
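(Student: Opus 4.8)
The plan is to reduce everything to one elementary fact about band operators. Fix $w$ so that all diagonals of $A$ (and $B$) vanish outside $\set{-w,\dots,w}$. Since $V_k e_i = e_{i+k}$ and $V_k^* = V_{-k}$, the shifted matrices have entries $(V_{-h_m}AV_{h_m})_{i,j} = A_{i+h_m,\,j+h_m}$, are band operators of the \emph{same} width $w$, and are uniformly bounded because $\norm{V_{-h_m}AV_{h_m}} = \norm{A}$ ($V_k$ is unitary on $\Xb = \ell^2(\Z)$). For such a sequence of matrices, strong convergence is equivalent to entrywise convergence: on a basis vector $e_j$ the image $V_{-h_m}AV_{h_m}e_j$ is supported in the finite set $\set{j-w,\dots,j+w}$, so strong convergence on $e_j$ is just convergence of those finitely many entries; and strong convergence on all basis vectors, together with the uniform bound, upgrades to strong convergence on all of $\Xb$ by the standard density argument. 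I would prove this equivalence first; the six items then follow almost mechanically.

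For the first item, observe that for each of the countably many pairs $(i,j)$ with $\abs{i-j}\le w$ the scalar sequences $\bigl(A_{i+h_m,j+h_m}\bigr)_m$ and $\bigl(B_{i+h_m,j+h_m}\bigr)_m$ are bounded (by $\norm{A}$ and $\norm{B}$). By Bolzano--Weierstrass and a diagonal extraction over this countable index set we obtain a subsequence $g$ of $h$ along which all these entries converge, and then the equivalence above gives that $A_g$ and $B_g$ exist.

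The algebraic items use $V_{h_m}V_{-h_m}=I$ and $V_k^*=V_{-k}$, which yield $V_{-h_m}(A+B)V_{h_m} = V_{-h_m}AV_{h_m}+V_{-h_m}BV_{h_m}$, $V_{-h_m}(AB)V_{h_m} = (V_{-h_m}AV_{h_m})(V_{-h_m}BV_{h_m})$ and $V_{-h_m}A^*V_{h_m} = (V_{-h_m}AV_{h_m})^*$. Additivity of strong limits handles $A+B$; for $AB$ I would invoke that $X_m\to X$ and $Y_m\to Y$ strongly with $\sup_m\norm{Y_m}<\infty$ implies $X_mY_m\to XY$ strongly, applied with $\norm{V_{-h_m}BV_{h_m}}=\norm{B}$. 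For $A^*$ one cannot argue directly, since strong convergence is \emph{not} preserved under adjoints in general; here one passes to entries: $A_{i+h_m,j+h_m}\to(A_h)_{i,j}$ forces $(A^*)_{i+h_m,j+h_m}=\overline{A_{j+h_m,i+h_m}}\to\overline{(A_h)_{j,i}}=\bigl((A_h)^*\bigr)_{i,j}$, and since $A^*$ is again a band operator of width $w$ the equivalence gives $(A^*)_h=(A_h)^*$. The norm bound is immediate from $\norm{A_hx}=\lim_m\norm{V_{-h_m}AV_{h_m}x}\le\norm{A}\norm{x}$, and for the compact case $h_m\to\infty$ forces $e_{j+h_m}\to 0$ weakly, so $\norm{Ae_{j+h_m}}\to0$ by compactness, hence $V_{-h_m}AV_{h_m}e_j=V_{-h_m}Ae_{j+h_m}\to0$ (the $V_{-h_m}$ are isometries), and the uniform bound extends this to all of $\Xb$, giving $A_h=0$.

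I expect the only genuinely delicate point to be the adjoint item: strong operator convergence does not in general commute with taking adjoints, so the argument must exploit that $A$ and $A^*$ are band operators of the same finite width, which is exactly what lets one replace strong convergence by entrywise convergence throughout. Everything else is bookkeeping around the unitarity of the shifts and the finiteness of the band.
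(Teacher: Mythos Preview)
Your argument is correct and is the standard route to these limit-operator facts. The paper itself does not prove Proposition~\ref{basic}; it simply records the statement and cites \cite[Proposition~3.4, Corollary~3.24]{Marko}, so there is no in-paper proof to compare against.

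One minor remark on the product item: the hypothesis you need is $\sup_m\norm{X_m}<\infty$ rather than $\sup_m\norm{Y_m}<\infty$, as one sees from the decomposition $X_mY_mx-XYx=X_m(Y_m-Y)x+(X_m-X)Yx$. This is harmless here since $\norm{V_{-h_m}AV_{h_m}}=\norm{A}$ is constant (and in any case strong convergence already forces uniform boundedness by Banach--Steinhaus), so the conclusion stands.
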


We call an operator $A$ Fredholm if $\ker(A)$ and $\im(A)^\perp$ are both finite-dimensional. As usual we define the spectrum
\[\spec(A) := \left\{\lambda \in \C : A - \lambda I \text{ is not invertible}\right\}\]
and the essential spectrum
\[\spess(A) := \left\{\lambda \in \C : A - \lambda I \text{ is not Fredholm}\right\}.\]

After introducing all the notation, we can cite the main theorem of limit operator theory (which holds in much more generality than stated and needed here).

\begin{thm} \label{mt}
(e.g.~\cite[Corollary 5.26]{HabilMarko})\\
Let $A \in \BO(\Xb)$. Then
\[\spess(A) = \bigcup\limits_{B \in \sigma^{\op}(A)} \spec(B).\]
\end{thm}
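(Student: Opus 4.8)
Although Theorem \ref{mt} is quoted from the literature, let me indicate the shape of a proof. The plan is to establish the two inclusions separately. It suffices to show, for each $\lambda \in \C$, that $A - \lambda I$ is Fredholm if and only if every limit operator of $A - \lambda I$ is invertible; since $(A - \lambda I)_h = A_h - \lambda I$ and $\sigma^{\op}(A - \lambda I) = \set{B - \lambda I : B \in \sigma^{\op}(A)}$ by Proposition \ref{basic}, there is no loss in taking $\lambda = 0$. I shall also use freely that the band-dominated operators $\Ac(\Xb) := \overline{\BO(\Xb)}$ form a $C^*$-algebra with $\Kc(\Xb) \subset \Ac(\Xb)$, so that $\Ac(\Xb)/\Kc(\Xb)$ is inverse-closed in the Calkin algebra; hence Fredholmness of the band operator $A$ is the same as invertibility of $A + \Kc(\Xb)$ in $\Ac(\Xb)/\Kc(\Xb)$, and the relevant parts of Proposition \ref{basic} extend from $\BO(\Xb)$ to $\Ac(\Xb)$ by approximation.

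For the inclusion $\bigcup_{B \in \sigma^{\op}(A)} \spec(B) \subseteq \spess(A)$, suppose $0 \notin \spess(A)$, i.e.\ $A$ is Fredholm, and fix an arbitrary $B = A_h \in \sigma^{\op}(A)$. Choose a band-dominated regularizer $C$ with $CA = I + K_1$ and $AC = I + K_2$, where $K_1, K_2 \in \Kc(\Xb)$. Passing to a subsequence $g$ of $h$ along which $C_g$ exists, and using that $D \mapsto D_g$ is additive and multiplicative, fixes $I$, and annihilates compact operators, the two identities become $C_g B = I$ and $B C_g = I$. Thus $B$ is invertible, so $0 \notin \spec(B)$; as $B$ was arbitrary this gives the claimed inclusion.

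For the reverse inclusion $\spess(A) \subseteq \bigcup_{B \in \sigma^{\op}(A)} \spec(B)$, assume every $B \in \sigma^{\op}(A)$ is invertible; I must show $A$ is Fredholm. The first and main step is the uniform estimate $\sup_{B \in \sigma^{\op}(A)} \norm{B^{-1}} < \infty$: if it failed, one could pick $B_k \in \sigma^{\op}(A)$ and unit vectors $x_k$ with $\norm{B_k x_k} \to 0$, and then -- exploiting the fixed finite band width of all the $B_k$ together with the self-similarity $\sigma^{\op}(A_h) \subseteq \sigma^{\op}(A)$ of the operator spectrum -- a shifting-and-diagonal argument extracts a limit operator in $\sigma^{\op}(A)$ possessing a nonzero kernel vector, contradicting invertibility. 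Granting the uniform bound, the second step is to assemble a Fredholm regularizer of $A$: one inverts the limit operators locally near ``infinity'', glues these local inverses with a partition of unity, and checks that the patching errors are compact (this is where the uniform invertibility enters), so that the glued operator inverts $A$ modulo $\Kc(\Xb)$. Hence $A$ is Fredholm, $0 \notin \spess(A)$, and the inclusion follows. The evident main obstacle is the uniform boundedness step; it holds here because $\sigma^{\op}(A)$ is rich for band operators, whereas in the fully general band-dominated setting it is a deep theorem.
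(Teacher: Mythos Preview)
The paper does not give a proof of this theorem at all; it is merely quoted from the literature (Lindner's Habilitationsschrift, \cite[Corollary 5.26]{HabilMarko}), so there is no ``paper's own proof'' against which to compare your attempt.

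That said, your sketch is correct in outline and matches the standard route taken in the cited sources (\cite{Marko}, \cite{HabilMarko}, \cite{RaRoSi}). The easy inclusion $\bigcup_B \spec(B) \subseteq \spess(A)$ is exactly as you describe: push a Fredholm regularizer through the limit-operator map using Proposition~\ref{basic}, noting that one must indeed first argue (via inverse-closedness of $\Ac(\Xb)/\Kc(\Xb)$ in the Calkin algebra) that the regularizer may be chosen band-dominated. For the hard inclusion you correctly isolate the two essential ingredients: a uniform bound $\sup_B \norm{B^{-1}} < \infty$ over $\sigma^{\op}(A)$, and a partition-of-unity gluing of local inverses near infinity to produce a two-sided regularizer modulo $\Kc(\Xb)$. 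Your diagonal/shifting heuristic for the uniform bound is the right idea, leaning on the closedness of $\sigma^{\op}(A)$ under taking limit operators of its members and on the common band width; in the references this is packaged as compactness of $\sigma^{\op}(A)$ in the $\Pc$-topology for \emph{rich} operators, and band operators on $\ell^2(\Z)$ are always rich. Your closing remark is also accurate: that uniform invertibility is automatic in the fully general band-dominated setting was only established later by Lindner and Seidel, after the reference cited here.
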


In order to apply this theorem to pseudo-ergodic operators, we use the following result that characterizes them in terms of limit operators.

\begin{prop} \label{lope}
Let $U_n, \ldots, U_m$ be non-empty and compact. Then $A \in \Psi E(U_n, \ldots, U_m)$ if and only if $\sigma^{\op}(A) = M(U_n, \ldots, U_m)$.
\end{prop}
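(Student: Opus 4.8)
The plan is to prove both implications separately. For the direction ``$A \in \PsiE(U_n, \ldots, U_m) \implies \sigma^{\op}(A) = M(U_n, \ldots, U_m)$'', I would first argue the inclusion $\sigma^{\op}(A) \subseteq M(U_n, \ldots, U_m)$. This is the easy half: if $B = A_h = \lim_m V_{-h_m} A V_{h_m}$ for some sequence $h_m \to \infty$, then for each fixed $k$ the $k$-th diagonal of $V_{-h_m} A V_{h_m}$ is a shifted piece of the $k$-th diagonal of $A$, hence has all entries in $U_k$ when $n \le k \le m$ and is zero otherwise; since $U_k$ is closed and strong convergence of band operators forces entrywise convergence, the limit $B$ also satisfies $B_{i+k,i} \in U_k$, so $B \in M(U_n, \ldots, U_m)$. (Note this inclusion uses nothing about pseudo-ergodicity — it holds for any $A \in M(U_n,\dots,U_m)$.) For the reverse inclusion $M(U_n, \ldots, U_m) \subseteq \sigma^{\op}(A)$, fix an arbitrary $B \in M(U_n, \ldots, U_m)$; I must produce a sequence $h_m \to \infty$ with $A_{h} = B$. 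Using pseudo-ergodicity, for each $m$ choose the finite central truncation $P_{-m,m} B P_{-m,m}$ (whose diagonal entries lie in the correct $U_k$, so it is a legitimate test matrix in $M_{fin}(U_n,\dots,U_m)$) and find a window of $A$, i.e.\ some translate $P_{k_m,l_m} A P_{k_m,l_m}$, matching it to within $\epsilon_m \to 0$; recentering this window gives an integer $h_m$ so that the central $(2m+1)\times(2m+1)$ block of $V_{-h_m} A V_{h_m}$ agrees with that of $B$ up to $\epsilon_m$. Passing to a subsequence if necessary (and using a diagonal argument to ensure $h_m \to \infty$ and that the limit operator actually exists — Proposition \ref{basic} guarantees a convergent subsequence), one checks that $V_{-h_m} A V_{h_m} \to B$ strongly, since on any fixed basis vector $e_j$ the discrepancy is eventually controlled by $\epsilon_m$ and the uniform band width. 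Hence $B \in \sigma^{\op}(A)$.

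For the converse direction ``$\sigma^{\op}(A) = M(U_n, \ldots, U_m) \implies A \in \PsiE(U_n, \ldots, U_m)$'', suppose the operator spectrum is all of $M(U_n,\dots,U_m)$ and fix $\epsilon > 0$ and $B \in M_{fin}(U_n, \ldots, U_m)$, say $B$ is an $N \times N$ matrix. Extend $B$ to an operator $\widetilde B \in M(U_n, \ldots, U_m)$ — for instance by tiling $B$ periodically along the diagonal, or more simply by filling the remaining diagonal entries with arbitrarily chosen points of the (non-empty) sets $U_k$ — so that $\widetilde B$ has $B$ sitting as a central finite block. By hypothesis $\widetilde B \in \sigma^{\op}(A)$, so there is a sequence $h_m \to \infty$ with $V_{-h_m} A V_{h_m} \to \widetilde B$ strongly. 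Strong convergence on the finitely many basis vectors spanning the relevant block forces the central $N \times N$ block of $V_{-h_m} A V_{h_m}$ — which is exactly a translated window $P_{k,l} A P_{k,l}$ of $A$ — to converge to $B$; hence for $m$ large enough $\norm{P_{k,l} A P_{k,l} - B} \le \epsilon$, which is the defining property of pseudo-ergodicity.

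The main obstacle is the bookkeeping in the first direction: turning the ``matching windows'' supplied by the definition of pseudo-ergodicity into a single shift sequence $h_m$ along which a genuine strong limit exists and equals the prescribed $B$. One has to be a little careful that the windows can be chosen to march off to infinity (they can, since pseudo-ergodicity lets one search arbitrarily far out along the diagonal — if a given finite block does not appear beyond some point, one simply asks for a longer block containing it, still available as a test matrix), and one needs the standard fact that for uniformly banded operators, entrywise convergence of the central blocks of $V_{-h_m} A V_{h_m}$ together with a uniform norm bound ($\norm{A}$) upgrades to strong convergence; combined with Proposition \ref{basic}, which ensures a subsequence along which the limit operator exists, this pins down $A_h = B$. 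Everything else is routine: the inclusion $\sigma^{\op}(A) \subseteq M(U_n,\dots,U_m)$ and the converse direction are both immediate consequences of the fact that strong convergence of band operators is the same as entrywise convergence of matrix entries, together with compactness (closedness) of the sets $U_k$.
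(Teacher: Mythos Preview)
Your proposal is essentially correct. The paper does not give a self-contained proof here at all --- it simply cites Corollary~3.70 in \cite{Marko} for diagonal operators and remarks that the argument carries over to band operators --- so your direct argument is considerably more detailed than the paper's, but it is the standard one and presumably what the cited reference contains.

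One small point on the step you correctly flag as the main obstacle. The phrase ``one simply asks for a longer block containing it'' needs a little sharpening: merely enlarging the test block does not by itself force the embedded copy of the target pattern $P$ to land outside a given bounded window of $A$. A clean fix is to use a test block containing \emph{two} copies of $P$ separated by a gap wider than that window (filled with arbitrary admissible entries, using non-emptiness of the $U_k$); since both copies cannot simultaneously sit inside the bounded region, at least one occurrence of $P$ must lie beyond it, and iterating gives occurrences at positions tending to infinity. With that tweak your construction of the shift sequence $h_m\to\infty$ goes through as sketched, and the rest of the argument --- entrywise convergence of uniformly banded, uniformly bounded shifts upgrading to strong convergence, and the converse direction via extending a finite block to an element of $M(U_n,\ldots,U_m)$ --- is fine.
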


\begin{proof}
For diagonal operators on $\ell^2(\Z)$ this is Corollary 3.70 in \cite{Marko}. The proof easily carries over to the case of band operators.
\end{proof}

Using this and $A \in M(U_n, \ldots, U_m)$, we get the following corollary.

\begin{cor} \label{mtpe}
Let $U_n, \ldots, U_m$ be non-empty and compact and let $A \in \Psi E(U_n, \ldots, U_m)$. Then
\begin{equation} \label{speceq}
\spec(A) = \spess(A) = \bigcup\limits_{B \in M(U_n, \ldots, U_m)} \spec(B).
\end{equation}
\end{cor}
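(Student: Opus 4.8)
The plan is to combine Theorem \ref{mt} with Proposition \ref{lope}, the only extra ingredient being to upgrade ``essential spectrum'' to ``spectrum''. First I would apply Proposition \ref{lope}: since $A \in \Psi E(U_n, \ldots, U_m)$, we have $\sigma^{\op}(A) = M(U_n, \ldots, U_m)$. Substituting this into Theorem \ref{mt} immediately yields
\[
\spess(A) = \bigcup\limits_{B \in \sigma^{\op}(A)} \spec(B) = \bigcup\limits_{B \in M(U_n, \ldots, U_m)} \spec(B),
\]
which is the second equality in \eqref{speceq}. So it remains to prove $\spec(A) = \spess(A)$, i.e.\ that $A - \lambda I$ is Fredholm if and only if it is invertible.

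The key observation is that $A$ is one of its own limit operators: since $\sigma^{\op}(A) = M(U_n, \ldots, U_m) \ni A$, there is a sequence $h$ of integers tending to infinity with $V_{-h_m} A V_{h_m} \to A$ strongly. This ``self-similarity'' is the heart of the argument. Suppose $A - \lambda I$ is Fredholm. By Theorem \ref{mt} applied to $A - \lambda I$ (whose limit operators are $B - \lambda I$ for $B \in \sigma^{\op}(A)$, using Proposition \ref{basic}), Fredholmness of $A - \lambda I$ is equivalent to invertibility of every limit operator $B - \lambda I$ with uniformly bounded inverses. In particular $A - \lambda I$ itself, being a limit operator of $A - \lambda I$, is invertible; hence $\lambda \notin \spec(A)$. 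This gives $\spec(A) \subseteq \spess(A)$, and the reverse inclusion $\spess(A) \subseteq \spec(A)$ is trivial since non-invertibility is weaker than non-Fredholmness. Therefore $\spec(A) = \spess(A)$, completing the proof.

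I expect the main (though minor) obstacle to be making precise the claim that $A - \lambda I$ inherits the relevant structure: one must check that $\sigma^{\op}(A - \lambda I) = \{B - \lambda I : B \in \sigma^{\op}(A)\}$, which follows from the first and last bullets of Proposition \ref{basic} (limit operators of $\lambda I$ are $\lambda I$, and limit operators respect sums), and that $A - \lambda I \in \sigma^{\op}(A - \lambda I)$. Once this bookkeeping is in place, the argument is a direct application of the cited results; no genuinely new estimate is needed. Alternatively, one can phrase the last step without invoking the uniform-boundedness refinement: simply note that whenever $A-\lambda I$ is Fredholm, Theorem \ref{mt} gives $\lambda \notin \spess(A) = \bigcup_{B} \spec(B) \supseteq \spec(A-\lambda I + \lambda I)$'s... in fact most cleanly, since $A \in \sigma^{\op}(A)$ we have $\spec(A) \subseteq \bigcup_{B \in \sigma^{\op}(A)} \spec(B) = \spess(A)$ directly, and this single inclusion together with the trivial reverse inclusion closes the argument.
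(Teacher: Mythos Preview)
Your proposal is correct and matches the paper's approach exactly: the paper's one-line proof is precisely your final clean version, namely that Proposition~\ref{lope} gives $\sigma^{\op}(A) = M(U_n,\ldots,U_m)$, Theorem~\ref{mt} then gives the second equality, and the observation $A \in M(U_n,\ldots,U_m)$ yields $\spec(A) \subseteq \bigcup_{B \in M} \spec(B) = \spess(A)$, which together with the trivial reverse inclusion gives $\spec(A) = \spess(A)$. Your initial detour through Fredholmness and uniform boundedness of inverses is unnecessary, as you yourself note at the end.
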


In particular, we see that the spectrum of a pseudo-ergodic operator only depends on the sets $U_n, \ldots, U_m$. Furthermore, Equation \eqref{speceq} provides a somewhat easy method to obtain lower bounds for the spectrum of $A \in \Psi E(U_n, \ldots, U_m)$. Indeed, we can take any operator $B \in M(U_n, \ldots, U_m)$ with a known spectrum and get a lower bound for the spectrum of $A$. For example the spectrum of a periodic operator $B$ can be computed via the Fourier transform.

\begin{thm} \label{spcper}
(e.g.~\cite[Theorem 4.4.9]{Davies2})\\
Let $U_n, \ldots, U_m$ be non-empty and compact, $p \in \N$, $B \in M_{per,p}(U_n, \ldots, U_m)$ and let $B_k \in \Lc(\C^p)$ be defined by $(B_k)_{i,j} = B_{i+kp,j}$ for all $i,j \in \left\{1, \ldots, p\right\}$ and $k \in \Z$. Then
\begin{equation} \label{specper}
\spec(B) = \bigcup\limits_{\theta \in [0,2\pi)} \spec\left(\sum\limits_{k \in \Z} B_k e^{-ik\theta}\right).
\end{equation}
\end{thm}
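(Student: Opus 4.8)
The plan is to diagonalize the $p$-periodic operator $B$ by means of a discrete Fourier/Bloch transform. First I would split the index set $\Z$ into residue classes modulo $p$, identifying $\ell^2(\Z)$ with $\ell^2(\Z) \otimes \C^p$ (or equivalently $\ell^2(\Z, \C^p)$), where the first factor records the ``block index'' and the second factor the position within a block. Under this identification the $p$-periodicity condition $B_{i,j} = B_{i+p,j+p}$ says precisely that $B$ commutes with the block-shift $S \otimes I_p$, where $S$ is the shift on $\ell^2(\Z)$; equivalently, $B$ becomes a (block) Laurent operator with symbol $b(\theta) := \sum_{k \in \Z} B_k e^{-ik\theta} \in \Lc(\C^p)$, the $B_k$ being exactly the $p \times p$ blocks defined in the statement. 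Because $B$ is a band operator, only finitely many $B_k$ are nonzero, so $b$ is a trigonometric polynomial with operator coefficients, hence a continuous (indeed real-analytic) function $[0,2\pi) \to \Lc(\C^p)$.

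Next I would invoke the Fourier transform $\Fc \from \ell^2(\Z, \C^p) \to L^2([0,2\pi), \C^p)$, which is unitary and conjugates the block-shift to multiplication by $e^{-i\theta}$. Under $\Fc$, the Laurent operator $B$ is carried to the multiplication operator $M_b$ acting by $(M_b f)(\theta) = b(\theta) f(\theta)$ on $L^2([0,2\pi), \C^p)$. It is then a standard fact about matrix-valued multiplication operators with continuous symbol that $\spec(M_b) = \bigcup_{\theta \in [0,2\pi)} \spec(b(\theta))$: if $\lambda \notin \spec(b(\theta))$ for every $\theta$, then $\theta \mapsto (b(\theta) - \lambda I)^{-1}$ is continuous, hence bounded on the compact circle, and furnishes a bounded inverse for $M_b - \lambda I$; conversely, if $\lambda \in \spec(b(\theta_0)) = \{\mu : \det(b(\theta_0) - \mu I) = 0\}$ for some $\theta_0$, one manufactures approximate eigenfunctions of $M_b - \lambda I$ by localizing near $\theta_0$. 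Chasing this equality back through the unitaries $\Fc$ and the block decomposition yields \eqref{specper}.

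The main obstacle — or rather the only point that needs care — is bookkeeping: verifying that the $p \times p$ blocks $(B_k)_{i,j} = B_{i+kp,j}$ are exactly the Fourier coefficients of the symbol that appears after the block decomposition, and that the residue-class identification intertwines everything correctly. Concretely one must check that $(Bv)$, written in block form, has $m$-th block equal to $\sum_k B_k v_{m-k}$, which is a direct computation using $B_{i,j} = B_{i+p,j+p}$ to reduce a general entry $B_{(mp+i),(np+j)}$ to $(B_{m-n})_{i,j}$. Once this matrix identity is in place, the spectral computation for matrix-valued multiplication operators is entirely standard (and is exactly the cited result, e.g.\ \cite[Theorem 4.4.9]{Davies2}), so in the write-up I would state the block identity explicitly, cite the multiplication-operator spectral fact, and keep the argument short.
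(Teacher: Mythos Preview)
Your argument is correct and is exactly the standard Bloch--Fourier diagonalization that underlies the cited result. The paper itself does not give a proof of this theorem; it is quoted as a known fact with a reference to \cite[Theorem 4.4.9]{Davies2}, so there is nothing to compare against beyond noting that your outline is precisely the approach taken in that reference.
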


This brief summary of limit operator theory is sufficient for the rest of this paper. We recommend \cite{Marko} and \cite{RaRoSi} for more details and further reading.

\section{The Numerical Range} \label{TheNumericalRange}

For the reader's convenience we start with the definition and some basic properties of the numerical range.

\begin{defn}
Let $A \in \Lc(\Xb)$. Then the numerical range is defined as
\[N(A) := \clos \left\{\left\langle Ax,x\right\rangle : x \in \Xb, \left\|x\right\| = 1\right\}.\]
For $\phi \in [0,2\pi)$ the (rotated) numerical abscissa is defined as
\[r_\phi(A) := \max\left\{\Re \, z : z \in N(e^{i\phi}A)\right\}.\]
\end{defn}

Note that the numerical range is usually defined without the closure (and denoted by $W(A)$), but we prefer to consider the numerical range as a compact set here. The following results are well-known and also hold in arbitrary Hilbert spaces.

\begin{thm} \label{HauToep}
(Hausdorff-Toeplitz)\\
Let $A \in \Lc(\Xb)$. Then $N(A)$ is convex.
\end{thm}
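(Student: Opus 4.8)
The statement (Theorem \ref{HauToep}) is classical, so I only sketch the standard argument. The plan is to prove convexity of the (non-closed) numerical range $W(A) := \set{\langle Ax,x\rangle : x \in \Xb,\ \norm{x} = 1}$; since $N(A) = \clos W(A)$ and the closure of a convex set is convex, this suffices. Convexity of $W(A)$ means: whenever $\lambda_1 = \langle Ax_1,x_1\rangle$ and $\lambda_2 = \langle Ax_2,x_2\rangle$ for unit vectors $x_1,x_2$, the whole segment $[\lambda_1,\lambda_2]$ lies in $W(A)$. If $x_1$ and $x_2$ are linearly dependent this is trivial, since then $\lambda_1 = \lambda_2$. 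Otherwise they span a two-dimensional subspace $V$; if $P$ denotes the orthogonal projection onto $V$, then $\langle Ax,x\rangle = \langle (PAP)|_V\,x,x\rangle$ for every unit $x \in V$, so $W\big((PAP)|_V\big) \subseteq W(A)$ and still contains $\lambda_1,\lambda_2$. It therefore suffices to prove the theorem for a $2 \times 2$ matrix.

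For the $2 \times 2$ case I first use that $W$ is invariant under unitary equivalence (replacing $x$ by $U^*x$ merely permutes the unit sphere), so by the Schur decomposition I may assume the matrix is upper triangular, $A = \begin{pmatrix} p & r \\ 0 & q \end{pmatrix}$. Parametrising a unit vector $x = (x_1,x_2)$ by $\abs{x_1}^2 = t \in [0,1]$ and $x_2\bar x_1 = \sqrt{t(1-t)}\,e^{i\psi}$ with $\psi \in \R$ (every such pair being attained), a direct computation gives
\[\langle Ax,x\rangle = \big(q + t(p-q)\big) + r\,x_2\bar x_1 = \big(q + t(p-q)\big) + \abs{r}\sqrt{t(1-t)}\,e^{i\psi'},\]
where $\psi'$ runs over all of $\R$. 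Hence, writing $c(t) := q + t(p-q)$ and $\rho(t) := \abs{r}\sqrt{t(1-t)}$, the set $W(A)$ is the union over $t \in [0,1]$ of the circles of radius $\rho(t)$ centred at $c(t)$. Since $W(A)$ is compact (a continuous image of the unit sphere) and this union of circles lies between the corresponding families of open and of closed discs, it in fact equals $\bigcup_{t \in [0,1]} \overline{D}\big(c(t),\rho(t)\big)$.

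Finally I show this union is convex. Given $w_j \in \overline{D}\big(c(t_j),\rho(t_j)\big)$ for $j = 0,1$ and $\mu \in [0,1]$, set $w_\mu := (1-\mu)w_0 + \mu w_1$ and $t_\mu := (1-\mu)t_0 + \mu t_1$. Because $c$ is affine,
\[\abs{w_\mu - c(t_\mu)} \leq (1-\mu)\abs{w_0 - c(t_0)} + \mu\abs{w_1 - c(t_1)} \leq (1-\mu)\rho(t_0) + \mu\rho(t_1) \leq \rho(t_\mu),\]
the last step using that $t \mapsto \sqrt{t(1-t)}$ is concave on $[0,1]$; thus $w_\mu$ again lies in the union. (Equivalently, one recognises $W(A)$ as the solid ellipse with foci $p$ and $q$, which is convex.) This proves convexity in the $2 \times 2$ case, hence in general.

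I expect the only genuine computation to be the one in the second paragraph — rewriting the quadratic form in the $(t,\psi)$ parametrisation and reading off the one-parameter family of circles (and checking the union of circles equals the union of closed discs); the reduction to dimension two, the Schur step, and the concavity argument are all routine.
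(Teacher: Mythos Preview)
Your sketch is the standard Toeplitz--Hausdorff argument and is essentially correct. Note, however, that the paper does not prove this theorem at all: it is stated as a well-known classical result (``The following results are well-known and also hold in arbitrary Hilbert spaces'') and used without proof, so there is no paper proof to compare against.

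One small point: the sentence ``this union of circles lies between the corresponding families of open and of closed discs, it in fact equals $\bigcup_{t}\overline{D}(c(t),\rho(t))$'' is not quite an argument. Compactness of $W(A)$ alone does not give you the equality. The clean way to close this is the intermediate value step you allude to at the end: for $w$ strictly inside $\overline{D}(c(t_0),\rho(t_0))$, the continuous function $t\mapsto |w-c(t)|^2-\rho(t)^2$ is nonnegative at $t=0$ and $t=1$ (where $\rho$ vanishes) and nonpositive at $t_0$, hence vanishes somewhere, putting $w$ on one of the circles. With that remark in place (and treating the degenerate case $r=0$ separately, where $W(A)$ is the segment $[p,q]$), your proof is complete.
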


\begin{thm} \label{convspccN}
Let $A \in \Lc(\Xb)$. It holds
\[\conv(\spec(A)) \subseteq N(A)\]
with equality if $A$ is normal. Moreover,
\[\sup\limits_{\norm{x} = 1}\abs{\sp{Ax}{x}} \leq \norm{A}\]
with equality if $A$ is normal.
\end{thm}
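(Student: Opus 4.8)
The plan is to reduce the inclusion $\conv(\spec(A)) \subseteq N(A)$ to the single-set inclusion $\spec(A) \subseteq N(A)$, which is legitimate because $N(A)$ is convex by Theorem~\ref{HauToep}, so it contains the convex hull of any of its subsets. To prove $\spec(A) \subseteq N(A)$ I would argue by contraposition. Suppose $\lambda \notin N(A)$; since $N(A)$ is closed, $d := \operatorname{dist}(\lambda, N(A)) > 0$. For every unit vector $x$, Cauchy--Schwarz and the definition of $d$ give $\norm{(A - \lambda I)x} \ge \abs{\sp{(A-\lambda I)x}{x}} = \abs{\sp{Ax}{x} - \lambda} \ge d$, and homogeneity upgrades this to $\norm{(A-\lambda I)x} \ge d\norm{x}$ for all $x$; hence $A - \lambda I$ is bounded below, so it is injective with closed range. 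The same estimate applied to $A^* - \bar\lambda I$ (using $\sp{(A^*-\bar\lambda I)x}{x} = \overline{\sp{(A-\lambda I)x}{x}}$) shows $\ker(A^* - \bar\lambda I) = \im(A - \lambda I)^\perp = \set{0}$, so the range of $A-\lambda I$ is also dense, hence all of $\Xb$. Therefore $A - \lambda I$ is invertible and $\lambda \notin \spec(A)$.

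For the equality in the normal case it remains to show $N(A) \subseteq \conv(\spec(A))$. Here I would invoke the spectral theorem: writing $A = \int_{\spec(A)} z \, dE(z)$, for any unit vector $x$ the set function $\mu_x(\cdot) := \sp{E(\cdot)x}{x}$ is a Borel probability measure carried by $\spec(A)$, and $\sp{Ax}{x} = \int z \, d\mu_x(z)$ is its barycenter, which lies in $\conv(\spec(A))$. Since $\spec(A)$ is a compact subset of $\C \cong \R^2$, its convex hull is already compact (Carathéodory), hence closed, so passing to closures gives $N(A) \subseteq \conv(\spec(A))$; combined with the first part this yields $N(A) = \conv(\spec(A))$. (It is worth noting that no extra closure on the right-hand side is needed precisely because of this compactness.)

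The bound $\sup_{\norm{x}=1}\abs{\sp{Ax}{x}} \le \norm{A}$ is immediate from $\abs{\sp{Ax}{x}} \le \norm{Ax}\,\norm{x} \le \norm{A}$ for unit $x$. For equality when $A$ is normal, I would observe that $\sup_{\norm{x}=1}\abs{\sp{Ax}{x}} = \max_{z \in N(A)}\abs{z}$ (continuity of $\abs{\cdot}$ together with compactness of $N(A)$), which by the equality just proved equals $\max_{z \in \conv(\spec(A))}\abs{z} = \max_{z \in \spec(A)}\abs{z}$, the spectral radius $r(A)$ (the disc $\set{\abs{z}\le r(A)}$ is convex and contains $\spec(A)$, so it contains $\conv(\spec(A))$, giving the reverse bound for free). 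Finally, for normal $A$ one has $\norm{A^{2^k}} = \norm{A}^{2^k}$ for all $k$ by the $C^*$-identity and normality, so by Gelfand's formula $r(A) = \lim_k \norm{A^{2^k}}^{1/2^k} = \norm{A}$, and the chain of equalities closes.

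As for the main obstacle: none of the steps is genuinely hard, but the real content of both ``normal'' assertions is the appeal to the spectral theorem — equivalently, the two facts that $\sp{Ax}{x}$ is a weighted average of points of $\spec(A)$ and that $\norm{A} = r(A)$ for normal operators; the rest is elementary Hilbert-space bookkeeping (Cauchy--Schwarz, boundedness below, and the elementary properties of convex hulls of compact planar sets).
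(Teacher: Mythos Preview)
Your proof is correct and follows the standard textbook route. Note, however, that the paper does not actually prove this theorem: it is stated there as a well-known result (alongside the Hausdorff--Toeplitz theorem) and used as a black box throughout. So there is no ``paper's proof'' to compare against; your argument simply supplies the standard justification the paper omits.
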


To determine the numerical range of an operator $A$, one usually applies the following method by Johnson \cite{Johnson}. Since the numerical range is convex by Theorem \ref{HauToep}, it suffices to compute the numerical abscissae $r_{\phi}(A)$ for every angle $\phi \in [0,2\pi)$. Fix $\phi \in [0,2\pi)$ and let $B := \frac{1}{2}(e^{i\phi}A + e^{-i\phi}A^*)$. Then
\[r_{\phi}(A) = \sup\limits_{\norm{x} = 1} \Re \sp{e^{i\phi}Ax}{x} = \sup\limits_{\norm{x} = 1} \frac{1}{2}\sp{(e^{i\phi}A + e^{-i\phi}A^*)x}{x} = \sup\limits_{\norm{x} = 1} \sp{Bx}{x} = r_0(B).\]
Since $B$ is self-adjoint, $r_{\phi}(A)$ is exactly equal to the rightmost point of the spectrum of $B$. This observation is the starting point for almost every result we prove in this paper.

We will also find it useful to talk about convergence of set sequences.

\begin{defn}
Let $(M_n)_{n \in \N}$ be a sequence of compact subsets of $\C$. Then we define
\begin{align*}
\limsup\limits_{n \to \infty} M_n &:= \set{m \in \C: m \text{ is an accumulation point of a sequence } (m_n)_{n \in \N}, m_n \in M_n},\\
\liminf\limits_{n \to \infty} M_n &:= \set{m \in \C: m \text{ is the limit of a sequence } (m_n)_{n \in \N}, m_n \in M_n}.
\end{align*}
The Hausdorff metric for compact sets $A,B \subset \C$ is defined as
\[h(A,B) := \max\set{\max\limits_{a \in A} \min_{b \in B} \abs{a-b},\max\limits_{b \in B} \min_{a \in A} \abs{a-b}}.\]
Moreover, we define $\lim\limits_{n \to \infty} M_n$ as the limit of the sequence $(M_n)_{n \in \N}$ w.r.t.~the Hausdorff metric.
\end{defn}

These notions are compatible with each other in the sense that they satisfy the same relations as they do for ordinary sequences:

\begin{prop}
(\cite[Proposition 3.6]{HaRoSi})\\
Let $(M_n)_{n \in \N}$ be a sequence of compact subsets of $\C$. Then the limit $\lim\limits_{n \to \infty} M_n$ exists if and only if $\limsup\limits_{n \to \infty} M_n = \liminf\limits_{n \to \infty} M_n$ and in this case we have
\[\lim\limits_{n \to \infty} M_n = \limsup\limits_{n \to \infty} M_n = \liminf\limits_{n \to \infty} M_n.\]
\end{prop}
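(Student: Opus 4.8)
The plan is to prove both one-sided inclusions that hold unconditionally, then treat the two directions of the equivalence. Throughout I would work under the (harmless, and in all our applications automatic) assumption that the $M_n$ are uniformly bounded and non-empty: this is what makes the Hausdorff metric finite, and it is genuinely needed, since e.g. $M_n = \set{0,n}$ has $\liminf_n M_n = \limsup_n M_n = \set 0$ but does not converge to $\set 0$ in the Hausdorff metric. The inclusion $\liminf_n M_n \subseteq \limsup_n M_n$ is immediate from the definitions, since a limit of a sequence $(m_n)$ with $m_n \in M_n$ is in particular an accumulation point of it. I would also recall the standard facts that both $\liminf_n M_n$ and $\limsup_n M_n$ are closed, hence (under uniform boundedness) compact, and that $a \mapsto \operatorname{dist}(a,K)$ is $1$-Lipschitz for any non-empty set $K$, with the distance attained when $K$ is compact.

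For the implication ``$\lim_n M_n =: M$ exists $\Rightarrow \liminf_n M_n = \limsup_n M_n = M$'', I would split $h(M_n,M) \to 0$ into its two halves. From $\sup_{a \in M}\operatorname{dist}(a,M_n) \to 0$ and compactness of each $M_n$, for every $a \in M$ one may pick $m_n \in M_n$ realizing $\operatorname{dist}(a,M_n)$, whence $m_n \to a$ and so $a \in \liminf_n M_n$; thus $M \subseteq \liminf_n M_n$. From $\sup_{b \in M_n}\operatorname{dist}(b,M) \to 0$: if $m_{n_k} \in M_{n_k}$ and $m_{n_k} \to m$, then $\operatorname{dist}(m,M) = \lim_k \operatorname{dist}(m_{n_k},M) = 0$ by Lipschitz continuity, and since $M$ is closed, $m \in M$; thus $\limsup_n M_n \subseteq M$. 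Combining with the trivial inclusion gives $M \subseteq \liminf_n M_n \subseteq \limsup_n M_n \subseteq M$, so all three sets equal $M$.

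For the converse, set $M := \liminf_n M_n = \limsup_n M_n$, which is compact, and show both one-sided distances vanish. \emph{No escape:} if $\sup_{b \in M_n}\operatorname{dist}(b,M) \not\to 0$, then along a subsequence it is $\ge \varepsilon$ and, by compactness of the $M_n$, attained at some $b_{n_k} \in M_{n_k}$ with $\operatorname{dist}(b_{n_k},M) \ge \varepsilon$; uniform boundedness and Bolzano--Weierstrass give a further subsequence $b_{n_{k_j}} \to b$, so $b \in \limsup_n M_n = M$, contradicting $\operatorname{dist}(b,M) \ge \varepsilon$. \emph{Eventual covering:} for each fixed $a \in M = \liminf_n M_n$ there is $m_n \in M_n$ with $m_n \to a$, so $\operatorname{dist}(a,M_n) \to 0$ pointwise; to upgrade this to uniformity, note $\sup_{a \in M}\operatorname{dist}(a,M_n)$ is attained at some $a_n \in M$ by compactness of $M$, and if it stayed $\ge \varepsilon$ along a subsequence, a convergent further subsequence $a_{n_k} \to a^\ast \in M$ combined with $|\operatorname{dist}(a_{n_k},M_{n_k}) - \operatorname{dist}(a^\ast,M_{n_k})| \le |a_{n_k} - a^\ast| \to 0$ and the pointwise statement $\operatorname{dist}(a^\ast, M_{n_k}) \to 0$ would force $\operatorname{dist}(a_{n_k},M_{n_k}) \to 0$, a contradiction. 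Hence $h(M_n,M) \to 0$.

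The main obstacle is the last step: passing from the pointwise fact $\operatorname{dist}(a,M_n) \to 0$ (which is essentially the definition of $\liminf$) to the uniform statement needed for Hausdorff convergence, where the base point $a$ is allowed to vary with $n$. A naive interchange of limits is not legitimate here; the compactness of $M$ together with the Lipschitz dependence of the distance function on the base point is exactly what repairs it. The ``no escape'' half, by contrast, is a routine compactness/diagonal extraction once uniform boundedness is in place, and the forward direction is straightforward. I would also flag explicitly that the uniform boundedness hypothesis, implicit in our use of these set limits, is indispensable, as the example above shows.
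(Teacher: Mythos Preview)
The paper does not give its own proof of this proposition; it simply quotes it from \cite[Proposition~3.6]{HaRoSi}. So there is no ``paper's proof'' to compare against, and your write-up stands on its own.

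Your argument is correct. The forward direction is straightforward, and in the converse you have correctly identified that the only non-trivial point is upgrading the pointwise convergence $\operatorname{dist}(a,M_n)\to 0$ (which is essentially the definition of $\liminf$) to the uniform statement $\sup_{a\in M}\operatorname{dist}(a,M_n)\to 0$; your compactness-plus-Lipschitz argument handles this cleanly. The ``no escape'' half is a routine extraction, as you say.

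Your remark that uniform boundedness (and non-emptiness) of the $M_n$ is genuinely needed is a good catch: the counterexample $M_n=\{0,n\}$ shows the statement as literally written is false without it. In the applications made in this paper the sets are numerical ranges of operators with uniformly bounded norm, so the hypothesis is automatic, but it is worth flagging.
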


\subsection{Limit Operator and Approximation Results}

We will first prove the following limit operator result, which can be proven (without further effort) in much more generality than we state it here.

\begin{thm} \label{mtnr}
Let $A \in \BO(\Xb)$. Then
\begin{equation} \label{mtnreq}
\bigcap\limits_{K \in \Kc(\Xb)} N(A+K) = \conv\left(\bigcup\limits_{B \in \sigma^{\op}(A)} N(B)\right).
\end{equation}
\end{thm}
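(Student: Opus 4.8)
The plan is to prove the two inclusions separately, exploiting the characterizations we already have. For the inclusion $\supseteq$, the key observation is that if $B = A_h$ is a limit operator of $A$, then for any unit vector $x$ with finite support we can translate it far out along the sequence $h$: setting $x_m := V_{h_m} x$, we have $\langle A x_m, x_m\rangle = \langle V_{-h_m} A V_{h_m} x, x\rangle \to \langle B x, x\rangle$. Since $A+K$ and $A$ have the same limit operators (because $K_h = 0$ for compact $K$ by Proposition \ref{basic}), the same computation shows $\langle (A+K) x_m, x_m\rangle \to \langle B x, x\rangle$, so $\langle B x, x\rangle \in N(A+K)$ for every compact $K$. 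Taking the closure over all finitely supported unit vectors $x$ gives $N(B) \subseteq \bigcap_K N(A+K)$ for each $B \in \sigma^{\op}(A)$, and since the right-hand side is convex (an intersection of convex sets), we get the convex hull of the union of all $N(B)$.

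For the reverse inclusion $\subseteq$, I would use the numerical-abscissa reduction of Johnson together with Theorem \ref{mt}. Fix $\phi \in [0,2\pi)$ and let $B_\phi := \tfrac12(e^{i\phi}A + e^{-i\phi}A^*)$, a self-adjoint band operator; then $r_\phi(A+K) = r_0\big(\tfrac12(e^{i\phi}(A+K) + e^{-i\phi}(A+K)^*)\big)$, and the latter is the top of the spectrum of a compact perturbation of $B_\phi$. By Theorem \ref{mt} applied to the self-adjoint operator $B_\phi$, the essential spectrum of $B_\phi$ — which equals the essential spectrum of any compact perturbation — is $\bigcup_{C \in \sigma^{\op}(B_\phi)} \spec(C)$; moreover, choosing $K$ to remove the (finitely many, by Fredholm theory) eigenvalues of $B_\phi$ above $\sup \spess(B_\phi)$, one gets $r_\phi(A+K)$ down to $\sup \spess(B_\phi)$, and it cannot go lower. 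Using the multiplicativity and $*$-compatibility of limit operators from Proposition \ref{basic}, every $C \in \sigma^{\op}(B_\phi)$ has the form $\tfrac12(e^{i\phi} A_h + e^{-i\phi} (A_h)^*)$ for some $A_h \in \sigma^{\op}(A)$, so $\sup \spess(B_\phi) = \sup_{B \in \sigma^{\op}(A)} r_\phi(B) = \sup_{B \in \sigma^{\op}(A)} \max\{\Re z : z \in N(B)\}$. Hence $\inf_K r_\phi(A+K)$ equals the $\phi$-abscissa of $\conv\big(\bigcup_B N(B)\big)$ for every $\phi$, and since a compact convex set is the intersection of the half-planes determined by its support function, intersecting over all $\phi$ (and over all $K$) yields the claimed identity.

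One technical point to handle carefully is that the two intersections — over $K$ and over $\phi$ — must be interchanged: the natural output of the argument above is $r_\phi\big(\bigcap_K N(A+K)\big) \le \inf_K r_\phi(A+K)$, and one needs equality plus the fact that a single $K$ can be chosen to realize (or approximate) the infimum simultaneously. This is where I expect the main obstacle: showing that $\bigcap_K N(A+K)$ is itself a numerical range of the form whose support function is $\phi \mapsto \inf_K r_\phi(A+K)$. The clean way around it is to avoid claiming a single optimal $K$ and instead argue at the level of support functions: $\bigcap_K N(A+K)$ is compact and convex, so it is determined by its support function $\phi \mapsto \max\{\Re z : z \in \bigcap_K N(A+K)\}$, and this quantity is squeezed between $\sup \spess(B_\phi)$ (from the $\supseteq$ direction, since the right-hand side of \eqref{mtnreq} lies in $\bigcap_K N(A+K)$) and $\inf_K r_\phi(A+K) = \sup \spess(B_\phi)$ (from the spectral argument). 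Matching support functions then forces set equality, completing the proof.
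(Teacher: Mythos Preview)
Your proposal is correct, and the $\supseteq$ direction is essentially the paper's argument (the paper packages the ``translate a unit vector far out'' idea into Lemma~\ref{nrBS} applied to the sequence $V_{-h_n}(A+K)V_{h_n}$, but the content is the same).

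For the $\subseteq$ direction your route differs from the paper's. You apply Theorem~\ref{mt} directly to the self-adjoint operator $B_\phi = \tfrac12(e^{i\phi}A + e^{-i\phi}A^*)$, combine it with Weyl's theorem (stability of $\spess$ under compact perturbations) and the classical fact that for a self-adjoint operator one can excise, by a finite-rank perturbation, the finitely many eigenvalues lying above $\sup\spess(B_\phi)+\epsilon$; this yields $\inf_K r_\phi(A+K) = \sup\spess(B_\phi) = \sup_{B\in\sigma^{\op}(A)} r_\phi(B)$, and then the support-function comparison closes the argument. The paper instead shifts by $z_0 = \|A\|$ to make the self-adjoint part positive, rewrites $r_0(A+K)$ as $\tfrac12\|A+K+(A+K)^*+2z_0 I\| - z_0$, and invokes the essential-norm identity $\inf_{K\in\Kc}\|C+K\| = \max_{B\in\sigma^{\op}(C)}\|B\|$ from \cite{HaLiSe}. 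Your approach has the advantage of relying only on tools already stated in the paper (Theorem~\ref{mt} and Proposition~\ref{basic}) plus standard self-adjoint spectral theory, whereas the paper's proof imports an external essential-norm result; on the other hand, the paper's $z_0$-shift trick avoids having to discuss removal of discrete eigenvalues and the $\epsilon$-limit, so it is a bit more streamlined once the cited identity is granted. One small wording point: your phrase ``the (finitely many, by Fredholm theory) eigenvalues of $B_\phi$ above $\sup\spess(B_\phi)$'' should be ``above $\sup\spess(B_\phi)+\epsilon$'', since infinitely many isolated eigenvalues may accumulate at the essential-spectral edge; your final support-function paragraph already handles this correctly.
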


To prove this, we need the following lemma that we will then apply to sequences $(V_{-h_n}(A+K)V_{h_n})_{n \in \N}$, where $K \in \Kc(\Xb)$ and $(h_n)_{n \in \N}$ is a sequence of integers tending to infinity.

\begin{lem} \label{nrBS}
Let $A \in \Lc(\Xb)$ and let $(A_n)_{n \in \N}$ be a sequence in $\Lc(\Xb)$ that converges to $A$ in weak operator topology. Then $N(A) \subseteq \liminf\limits_{n \to \infty} N(A_n)$.
\end{lem}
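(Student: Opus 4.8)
The plan is to show that every element of $N(A)$ lies in $\liminf_{n\to\infty} N(A_n)$ by writing it as a genuine limit of numerical-range values of the $A_n$. Fix $z \in N(A)$. Since $N(A)$ is the closure of $\set{\sp{Ax}{x} : \norm{x}=1}$, it suffices to handle the case $z = \sp{Ax}{x}$ for some unit vector $x$ (the general case follows by a diagonal argument, using that $\liminf$ is a closed set — one approximates $z$ by such values, produces approximating sequences for each, and diagonalizes). So assume $z = \sp{Ax}{x}$ with $\norm{x}=1$.

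The key step is then to exhibit, for each $n$, a unit vector $x_n$ with $\sp{A_n x_n}{x_n} \to z$; taking $z_n := \sp{A_n x_n}{x_n} \in N(A_n)$ gives $z \in \liminf N(A_n)$. The natural choice is $x_n := x$ for all $n$. Indeed, weak operator convergence $A_n \to A$ means $\sp{A_n u}{v} \to \sp{A u}{v}$ for all $u,v \in \Xb$; applying this with $u = v = x$ yields $\sp{A_n x}{x} \to \sp{A x}{x} = z$ directly. Hence $z_n := \sp{A_n x}{x} \in N(A_n)$ and $z_n \to z$, which is exactly the statement that $z \in \liminf_{n\to\infty} N(A_n)$.

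Finally I would assemble the two pieces. Let $z \in N(A)$ be arbitrary. Choose unit vectors $x^{(k)}$ with $\sp{A x^{(k)}}{x^{(k)}} \to z$. For each fixed $k$, the argument above gives $\sp{A_n x^{(k)}}{x^{(k)}} \xrightarrow{n\to\infty} \sp{A x^{(k)}}{x^{(k)}}$, so we may pick an index $n_k$ (increasing in $k$) with $\abs{\sp{A_{n} x^{(k)}}{x^{(k)}} - \sp{A x^{(k)}}{x^{(k)}}} \le 1/k$ for all $n \ge n_k$. Define $w_n := x^{(k)}$ for $n_k \le n < n_{k+1}$; then $w_n$ is a unit vector, $\sp{A_n w_n}{w_n} \in N(A_n)$, and $\sp{A_n w_n}{w_n} \to z$ by the triangle inequality. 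Thus $z \in \liminf_{n\to\infty} N(A_n)$, proving $N(A) \subseteq \liminf_{n\to\infty} N(A_n)$.

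The only mild subtlety — and the main thing to get right — is the diagonalization needed because $N(A)$ is the \emph{closure} of the raw numerical range rather than the raw set itself; for points actually attained as $\sp{Ax}{x}$ the proof is immediate from the definition of weak convergence. No closedness or boundedness of $\liminf$ beyond its defining property is required, and the sequence $(A_n)$ need not be norm-bounded for this inclusion (boundedness would matter for the reverse inclusion, which is not claimed here).
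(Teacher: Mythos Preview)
Your proof is correct and follows essentially the same approach as the paper: both arguments pick, for a given $z \in N(A)$, a sequence of unit vectors $x^{(k)}$ with $\sp{Ax^{(k)}}{x^{(k)}} \to z$, use weak convergence to find increasing indices $n_k$ at which $\sp{A_n x^{(k)}}{x^{(k)}}$ is close to $\sp{Ax^{(k)}}{x^{(k)}}$, and then diagonalize to produce $z_n \in N(A_n)$ with $z_n \to z$. The only cosmetic differences are that the paper does not separate out the ``attained'' case $z = \sp{Ax}{x}$ as a preliminary step, and it explicitly handles the finitely many indices $n < n_1$ by choosing $z_n \in N(A_n)$ arbitrarily (a trivial point you left implicit).
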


\begin{proof}
$A_n \to A$ in the weak operator topology implies $\left\langle (A_n-A)x,x \right\rangle \to 0$ for all $x \in \Xb$ as $n \to \infty$. Let $z \in N(A)$. Choose $x_1 \in \Xb$ with $\left\|x_1\right\| = 1$ such that $\left|z - \left\langle Ax_1,x_1 \right\rangle\right| < 1$ and $n_1$ such that $\left|\left\langle (A_n-A)x_1,x_1 \right\rangle\right| < 1$ for all $n \geq n_1$. For $j \in \N$ choose $x_{j+1} \in \Xb$ with $\left\|x_{j+1}\right\| = 1$ such that $\left|z - \left\langle Ax_{j+1},x_{j+1} \right\rangle\right| < \frac{1}{j+1}$ and $n_{j+1} > n_j$ such that $\left|\left\langle (A_n-A)x_{j+1},x_{j+1} \right\rangle\right| < \frac{1}{j+1}$ for all $n \geq n_{j+1}$. Of course this implies $\left|z - \left\langle A_nx_j,x_j \right\rangle\right| < \frac{2}{j}$ for all $n \geq n_j$. Now define a sequence $(z_n)_{n \in \N}$ of complex numbers as follows. For $n < n_1$ choose $z_n \in N(A_n)$ arbitrarily. For $j \in \N$ and $n_j \leq n < n_{j+1}$ choose $z_n \in N(A_n)$ such that $\left|z - z_n\right| < \frac{2}{j}$. We get $\left|z - z_n\right| \to 0$ as $n \to \infty$. Thus $N(A) \subseteq \liminf\limits_{n \to \infty} N(A_n)$.
\end{proof}

\begin{proof}[Proof of Theorem \ref{mtnr}]
Let $B \in \sigma^{\op}(A)$ and $K \in \Kc(\Xb)$. To prove ``$\supseteq$'' it suffices to show $N(B) \subseteq N(A+K)$ because the intersection of convex sets is again convex. So let $h$ be a sequence of integers tending to infinity such that $A_h = B$. By Proposition \ref{basic}, $B$ is also a limit operator of $A+K$:
\[(A+K)_h = A_h + K_h = A_h + 0 = A_h = B.\]
Applying Lemma \ref{nrBS} to the sequence $(V_{-h_n}(A+K)V_{h_n})_{n \in \N}$ and using that the numerical range is invariant under unitary transformations, we get
\[N(B) \subseteq \liminf\limits_{n \to \infty} N(V_{-h_n}(A+K)V_{h_n}) = \liminf\limits_{n \to \infty} N(A+K) = N(A+K).\]

To prove the other inclusion, recall that it suffices to compare numerical abscissae, i.e.~to show
\[\inf\limits_{K \in \Kc(\Xb)} r_\phi(A+K) \leq \max\set{r_\phi(B) : B \in \sigma^{\op}(A)}.\]
for all $\phi \in [0,2\pi)$. Since $r_\phi(A) = r_0(e^{i\phi}A)$ for all $A \in \Lc(\Xb)$ and $\phi \in [0,2\pi)$, it even suffices to consider $\phi = 0$. Set $z_0 := \norm{A}$. Then
\begin{align*}
r_0(A+K) &= \sup\limits_{\norm{x} = 1} \Re\sp{(A+K)x}{x}\\
&= \sup\limits_{\norm{x} = 1} \Re\sp{(A + K + z_0 I)x}{x} - z_0\\
&\leq \sup\limits_{\norm{x} = 1} \abs{\Re\sp{(A + K + z_0 I)x}{x}} - z_0\\
&= \sup\limits_{\norm{x} = 1} \abs{\frac{1}{2}\sp{(A+K + (A+K)^* + 2z_0 I)x}{x}} - z_0\\
&= \frac{1}{2}\norm{A+K + (A+K)^* + 2z_0 I} - z_0,
\end{align*}
where we applied Theorem \ref{convspccN} to the self-adjoint (hence normal) operator $A+K + (A+K)^* + 2z_0 I$. Taking the infimum, we arrive at
\begin{align*}
\inf\limits_{K \in \Kc(\Xb)} r_0(A+K) &\leq \frac{1}{2}\inf\limits_{K \in \Kc(\Xb)}\norm{A+K + (A+K)^* + 2z_0 I} - z_0\\
&= \frac{1}{2}\inf\limits_{\substack{K \in \Kc(\Xb) \\ K = K^*}}\norm{A + A^* + K + 2z_0 I} - z_0.\\
\end{align*}
For a self-adjoint operator $C \in \Lc(\Xb)$, the norm $\norm{C+K}$ is minimized by a self-adjoint operator $K \in \Kc(\Xb)$. This can be seen as follows:
\begin{align*}
\norm{C+K} &\geq \sup\limits_{\norm{x} = 1} \abs{\sp{(C+K)x}{x}}\\
&= \sup\limits_{\norm{x} = 1} \abs{\sp{\left(C + \frac{K+K^*}{2}\right)x}{x} + \sp{\left(C + \frac{K-K^*}{2}\right)x}{x}}\\
&\geq \sup\limits_{\norm{x} = 1} \abs{\sp{\left(C + \frac{K+K^*}{2}\right)x}{x}}\\
&= \norm{C + \frac{K+K^*}{2}},
\end{align*}
where we used Theorem \ref{convspccN} and the fact that $\sp{\left(C + \frac{K+K^*}{2}\right)x}{x} \in \R$ and $\sp{\left(C + \frac{K-K^*}{2}\right)x}{x} \in i\R$ for all $x \in \Xb$. Moreover, we have
\[\inf\limits_{K \in \Kc(\Xb)} \norm{A+K} = \max\limits_{B \in \sigma^{\op}(A)} \norm{B}\]
for all $A \in \BO(\Xb)$ by \cite[Theorem 3.2]{HaLiSe}. Combining these results and using Proposition \ref{basic}, we get
\begin{align*}
\inf\limits_{K \in \Kc(\Xb)} r_0(A+K) &\leq \frac{1}{2} \inf\limits_{K \in \Kc(\Xb)} \norm{A + A^* + K + 2z_0 I} - z_0\\
&= \frac{1}{2} \max\set{\norm{B} : B \in \sigma^{\op}(A + A^* + 2z_0 I)} - z_0\\
&= \frac{1}{2} \max\set{\norm{B + B^* + 2z_0 I} : B \in \sigma^{\op}(A)} - z_0.
\end{align*}
Since $r_{\phi}(B) \leq \norm{B} \leq \norm{A}$ for all $\phi \in [0,2\pi)$ by Theorem \ref{convspccN} and Proposition \ref{basic}, $N(B + z_0 I)$ is contained in the right half plane for every $B \in \sigma^{\op}(A)$. This implies
\begin{align*}
r_0(B) &= \sup\limits_{\norm{x} = 1} \Re\sp{Bx}{x}\\
&= \sup\limits_{\norm{x} = 1} \Re\sp{(B + z_0 I)x}{x} - z_0\\
&= \sup\limits_{\norm{x} = 1} \abs{\Re\sp{(B + z_0 I)x}{x}} - z_0\\
&= \sup\limits_{\norm{x} = 1} \abs{\frac{1}{2}\sp{(B + B^* + 2z_0 I)x}{x}} - z_0\\
&= \frac{1}{2}\norm{B + B^* + 2z_0 I} - z_0.
\end{align*}
We conclude
\[\inf\limits_{K \in \Kc(\Xb)} r_0(A+K) \leq \max\set{r_0(B) : B \in \sigma^{\op}(A)}.\qedhere\]
\end{proof}

If we apply this result to pseudo-ergodic operators, we get the following corollary:

\begin{cor} \label{mtnrpe}
Let $U_n, \ldots, U_m$ be non-empty and compact. It holds
\[N(A) = \bigcup\limits_{B \in M(U_n, \ldots, U_m)} N(B)\]
for all $A \in \Psi E(U_n, \ldots, U_m)$.
\end{cor}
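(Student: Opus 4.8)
The plan is to obtain this as an immediate consequence of Theorem \ref{mtnr} together with the limit operator characterisation of pseudo-ergodic operators in Proposition \ref{lope}. The observation that makes everything work is that a pseudo-ergodic operator is one of its own limit operators: since $A \in \Psi E(U_n, \ldots, U_m) \subseteq M(U_n, \ldots, U_m)$ and $\sigma^{\op}(A) = M(U_n, \ldots, U_m)$ by Proposition \ref{lope}, we have $A \in \sigma^{\op}(A)$.

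First I would use this to see that the numerical range of $A$ is invariant under compact perturbations. Because $A \in \sigma^{\op}(A)$, the right-hand side of \eqref{mtnreq} contains $N(A)$, so Theorem \ref{mtnr} gives
\[N(A) \subseteq \conv\Bigl(\bigcup_{B \in \sigma^{\op}(A)} N(B)\Bigr) = \bigcap_{K \in \Kc(\Xb)} N(A+K) \subseteq N(A),\]
where the last inclusion is obtained by taking $K = 0$. Hence all these sets coincide, and inserting $\sigma^{\op}(A) = M(U_n, \ldots, U_m)$ from Proposition \ref{lope} yields
\[N(A) = \conv\Bigl(\bigcup_{B \in M(U_n, \ldots, U_m)} N(B)\Bigr).\]

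It remains to discard the convex hull. Since $A$ itself belongs to $M(U_n, \ldots, U_m)$, we have
\[N(A) \subseteq \bigcup_{B \in M(U_n, \ldots, U_m)} N(B) \subseteq \conv\Bigl(\bigcup_{B \in M(U_n, \ldots, U_m)} N(B)\Bigr) = N(A),\]
forcing equality throughout; in particular the union on the right is already convex and equals $N(A)$, which is the claim. There is no real obstacle here: the only points that require attention are the self-referential fact $A \in \sigma^{\op}(A)$, which produces the two-sided sandwich above, and the remark that $A \in M(U_n, \ldots, U_m)$, which makes the convex hull in Theorem \ref{mtnr} redundant in this setting.
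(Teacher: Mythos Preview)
Your proof is correct and follows exactly the line the paper indicates: apply Theorem \ref{mtnr} together with Proposition \ref{lope} to obtain $N(A) = \conv\bigl(\bigcup_{B \in M(U_n,\ldots,U_m)} N(B)\bigr)$, and then use $A \in M(U_n,\ldots,U_m)$ to drop the convex hull. The paper does not spell out the argument but merely remarks that ``taking the convex hull is obviously not necessary here''; your sandwich argument is the natural way to make this precise.
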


Note that taking the convex hull is obviously not necessary here. In fact, it suffices to consider periodic operators on the right-hand side:

\begin{cor} \label{nrperapprox}
Let $U_n, \ldots, U_m$ be non-empty and compact. It holds
\[N(A) = \clos\left(\bigcup\limits_{B \in M_{per}(U_n, \ldots, U_m)} N(B)\right)\]
for all $A \in \Psi E(U_n, \ldots, U_m)$.
\end{cor}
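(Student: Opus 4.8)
The plan is to deduce this from Corollary~\ref{mtnrpe}, which already gives $N(A) = \bigcup_{B \in M(U_n, \ldots, U_m)} N(B)$, by showing that on the right-hand side it suffices (after taking a closure) to retain only the periodic operators. The inclusion ``$\supseteq$'' is immediate: since $M_{per}(U_n, \ldots, U_m) \subseteq M(U_n, \ldots, U_m)$, we have $\bigcup_{B \in M_{per}(U_n, \ldots, U_m)} N(B) \subseteq \bigcup_{B \in M(U_n, \ldots, U_m)} N(B) = N(A)$ by Corollary~\ref{mtnrpe}, and $N(A)$ is closed, so its closure is contained in $N(A)$ as well.

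For the reverse inclusion, fix $z \in N(A)$ and $\epsilon > 0$. By Corollary~\ref{mtnrpe} there is some $B \in M(U_n, \ldots, U_m)$ with $z \in N(B)$, so we may pick a unit vector $x \in \Xb$ with $\abs{\sp{Bx}{x} - z} < \epsilon/2$. The idea is to truncate $x$ and then build a periodic operator agreeing with $B$ on the relevant finite window. Writing $P_N := P_{-N,N}$, the vectors $x_N := P_N x / \norm{P_N x}$ are well-defined unit vectors for $N$ large, and since $B$ is bounded and $P_N x \to x$ in norm we get $\sp{B x_N}{x_N} \to \sp{Bx}{x}$; hence $\abs{\sp{B x_N}{x_N} - z} < \epsilon$ once $N$ is large enough. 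Fix such an $N$.

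Next define a $(2N+1)$-periodic operator $B'$ by copying, on each diagonal, the stretch of that diagonal of $B$ lying over the positions $\set{-N, \ldots, N}$ and repeating it with period $2N+1$; formally, $B'_{i,j} := B_{i - k(2N+1),\, j - k(2N+1)}$, where $k \in \Z$ is chosen so that $j - k(2N+1) \in \set{-N, \ldots, N}$. Since $i-j$ is unchanged and every entry of $B'$ equals an entry of $B$ on the same diagonal, $B'$ lies in $M_{per,2N+1}(U_n, \ldots, U_m) \subseteq M_{per}(U_n, \ldots, U_m)$, and by construction $B'_{i,j} = B_{i,j}$ for all $i,j \in \set{-N, \ldots, N}$. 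As $x_N$ is supported in $\set{-N, \ldots, N}$, this yields $\sp{B' x_N}{x_N} = \sp{B x_N}{x_N}$, hence $z$ has distance less than $\epsilon$ from $N(B')$. Letting $\epsilon \to 0$ shows $z \in \clos\bigl(\bigcup_{B \in M_{per}(U_n, \ldots, U_m)} N(B)\bigr)$, which completes the proof.

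The argument is essentially routine; the only points needing a little care are the convergence $\sp{B x_N}{x_N} \to \sp{Bx}{x}$ under truncation (which follows from boundedness of $B$ together with joint continuity of the inner product on norm-convergent sequences) and the verification that the periodized operator $B'$ still has the prescribed diagonal entries. The latter is automatic, because $B'$ only reuses entries of $B$ from the same diagonals and $B \in M(U_n, \ldots, U_m)$ already forces those entries to lie in the sets $U_k$; so no genuine obstacle arises.
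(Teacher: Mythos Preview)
Your proof is correct. Both the paper's proof and yours establish ``$\supseteq$'' via Corollary~\ref{mtnrpe} and ``$\subseteq$'' by approximating with periodic operators, but the mechanics differ. The paper approximates the pseudo-ergodic operator $A$ itself by a sequence $(A_k)_{k\in\N} \subset M_{per}(U_n,\ldots,U_m)$ converging strongly (hence weakly) to $A$, and then invokes Lemma~\ref{nrBS} to get $N(A) \subseteq \liminf_k N(A_k)$, which immediately yields the required inclusion. You instead pass through Corollary~\ref{mtnrpe} to an arbitrary $B \in M(U_n,\ldots,U_m)$, truncate the test vector, and periodize $B$ explicitly on the resulting finite window. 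Your route is slightly more hands-on and avoids citing Lemma~\ref{nrBS}, at the cost of unpacking the truncation/convergence argument that Lemma~\ref{nrBS} already encapsulates; the paper's route is shorter because it reuses that lemma. Substantively the two arguments amount to the same idea.
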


\begin{proof}
Let $A \in \Psi E(U_n, \ldots, U_m)$. It is not difficult to find a sequence $(A_k)_{k \in \N} \subset M_{per}(U_n, \ldots, U_m)$ that converges weakly to $A$ (even strongly). Thus by Lemma \ref{nrBS} and Corollary \ref{mtnrpe}, we have
\[N(A) \subseteq \liminf\limits_{k \to \infty} N(A_k) \subseteq \clos\left(\bigcup\limits_{B \in M_{per}(U_n, \ldots, U_m)} N(B)\right) \subseteq N(A).\qedhere\]
\end{proof}

In the next section we will see that in the case of a tridiagonal pseudo-ergodic operator $A$, it even suffices to consider the Laurent operators contained in $\sigma^{\op}(A)$.

So far we only considered numerical ranges of operators $A \in \BO(\ell^2(\Z))$. However, it is sometimes more convenient to work with operators $A \in \Lc(\ell^2(\N))$. We will thus find the following well-known proposition useful.

\begin{prop} \label{selfsim}
Let $A \in \BO(\ell^2(\Z))$ and let $A_+ := P_{\N}AP_{\N}|_{\im P_{\N}} \in \Lc(\ell^2(\N))$, where $P_{\N}$ denotes the projection onto $\text{span}\left\{e_1,e_2, \ldots \right\}$. If there exists a sequence $(h_m)_{m \in \N}$ of integers tending $+\infty$ such that $A_h$ exists and is equal to $A$, then $N(A) = N(A_+)$.
\end{prop}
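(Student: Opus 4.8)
The plan is to establish the two inclusions $N(A_+) \subseteq N(A)$ and $N(A) \subseteq N(A_+)$ separately; the first is essentially formal, and the second is where the self-similarity hypothesis $A_h = A$ enters.

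For the inclusion $N(A_+) \subseteq N(A)$, I identify $\ell^2(\N)$ with the subspace $\im P_{\N}$ of $\Xb$ spanned by $e_1, e_2, \ldots$ in the obvious way, so that the inner product on $\ell^2(\N)$ is the restriction of the one on $\Xb$. Given $y \in \im P_{\N}$ with $\norm{y} = 1$ we have $P_{\N} y = y$, hence $\sp{A_+ y}{y} = \sp{P_{\N} A P_{\N} y}{y} = \sp{A y}{y}$. Thus every point of the (non-closed) numerical range of $A_+$ is a point of the non-closed numerical range of $A$, and taking closures gives $N(A_+) \subseteq N(A)$.

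For the reverse inclusion $N(A) \subseteq N(A_+)$, note that $N(A_+)$ is closed, so it suffices to show $\sp{Ax}{x} \in N(A_+)$ for every finitely supported unit vector $x \in \Xb$, since such values are dense in $N(A)$. Fix such an $x$, with support contained in $\set{e_{-N}, \ldots, e_N}$ say, and for $m \in \N$ put $x_m := V_{h_m} x$, the right shift of $x$ by $h_m$. Since $h_m \to +\infty$, the support of $x_m$ is contained in $\set{e_1, e_2, \ldots}$, i.e.\ $x_m \in \im P_{\N}$, for all sufficiently large $m$, while $\norm{x_m} = \norm{x} = 1$ because $V_{h_m}$ is unitary. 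For those $m$, using the first paragraph and unitarity of $V_{h_m}$,
\[\sp{A_+ x_m}{x_m} = \sp{A x_m}{x_m} = \sp{A V_{h_m} x}{V_{h_m} x} = \sp{V_{-h_m} A V_{h_m} x}{x}.\]
By hypothesis $V_{-h_m} A V_{h_m} \to A_h = A$ in the strong operator topology (which, for band operators, is the notion of convergence defining limit operators; cf.\ the footnote accompanying the definition of limit operators), so the last expression converges to $\sp{Ax}{x}$ as $m \to \infty$. Hence $\sp{Ax}{x}$ is a limit of points of the numerical range of $A_+$ and therefore lies in $N(A_+)$. Combining the two inclusions yields $N(A) = N(A_+)$.

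I do not expect a genuine obstacle here: the proof is short, and the only points that require care are the bookkeeping that lets one pass between $\ell^2(\N)$ and $\im P_{\N} \subseteq \Xb$ without changing the quadratic form, and the routine density reduction to finitely supported test vectors — which is exactly what makes the shifted vectors $V_{h_m} x$ eventually supported on $\N$. The one indispensable ingredient is the assumption that $A$ occurs among its own limit operators along a sequence tending to $+\infty$.
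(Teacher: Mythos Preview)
Your proof is correct and takes a genuinely different route from the paper's. Both arguments dispatch the easy inclusion $N(A_+) \subseteq N(A)$ in the same way. For the reverse inclusion, you work directly with test vectors: reduce to finitely supported unit vectors $x$, shift them by $h_m$ into $\im P_{\N}$, and use the strong convergence $V_{-h_m}AV_{h_m} \to A$ to exhibit $\sp{Ax}{x}$ as a limit of quadratic-form values of $A_+$. The paper instead constructs an auxiliary operator $\tilde{A} = P_{\N}AP_{\N} \oplus cQ_{\N}$ on $\ell^2(\Z)$ for any fixed $c \in N(A_+)$, observes via convexity that $N(\tilde{A}) \subseteq N(A_+)$, and then invokes the limit-operator machinery (Theorem~\ref{mtnr}, ultimately Lemma~\ref{nrBS}) together with the fact that $A$ is a limit operator of $\tilde{A}$ along the same sequence $h$ to get $N(A) \subseteq N(\tilde{A})$. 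Your approach is more elementary and self-contained: it needs neither the auxiliary operator nor the earlier results of the paper, only the definition of limit operator and the closedness of $N(A_+)$. The paper's approach is slightly slicker in that it avoids the density reduction and makes the role of convexity explicit, but at the cost of relying on heavier prior machinery.
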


\begin{proof}
Clearly, $\sp{A_+x}{x} = \sp{Ax}{x}$ for all $x \in \im P_{\N}$ and thus $N(A_+) \subseteq N(A)$. Conversely, let $c \in N(A_+)$, $Q_{\N} := I - P_{\N}$ and consider $\tilde{A} := P_{\N}AP_{\N}|_{\im P_{\N}} + cQ_{\N}|_{\im Q_{\N}}$. Then
\[\langle\tilde{A}x,x\rangle = \sp{A_+P_{\N}x}{P_{\N}x} + c\sp{Q_{\N}x}{Q_{\N}x} = \sp{A_+\frac{P_{\N}x}{\norm{P_{\N}x}}}{\frac{P_{\N}x}{\norm{P_{\N}x}}}\norm{P_{\N}x}^2 + c\norm{Q_{\N}x}^2.\]
Since $\norm{P_{\N}x}^2 + \norm{Q_{\N}x}^2 = \norm{x}^2$ and $N(A_+)$ is convex, we get $N(\tilde{A}) \subseteq N(A_+)$. Moreover, $A$ is a limit operator of $\tilde{A}$ and thus $N(A) \subseteq N(\tilde{A})$ by Theorem \ref{mtnr}. We conclude $N(A) = N(A_+)$.
\end{proof}

\subsection{Tridiagonal Pseudo-Ergodic Operators}

In this section we focus on the case of tridiagonal pseudo-ergodic operators. Here the following simplification of Corollary \ref{nrperapprox} can be achieved:

\begin{thm} \label{ntridiag}
Let $U_{-1}$, $U_0$ and $U_1$ be non-empty and compact. Then for $A \in \Psi E(U_{-1},U_0,U_1)$ the following formula holds:
\[N(A) \stackrel{(i)}{=} \conv\left(\bigcup\limits_{B \in L(U_{-1},U_0,U_1)} \spec(B)\right) \stackrel{(ii)}{=}  \conv\left(\bigcup\limits_{\substack{u_k \in U_k, \\ k = -1,0,1}} \left\{u_{-1}e^{i\theta} + u_0 + u_1e^{-i\theta} : \theta \in [0,2\pi)\right\}\right).\]
In particular, $\spec(A) = N(A)$ if $\bigcup\limits_{B \in L(U_{-1},U_0,U_1)} \spec(B)$ is convex.
\end{thm}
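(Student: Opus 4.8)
The plan is to establish the two equalities separately, exploiting the machinery already in place. For equality $(ii)$, I would start from the formula in Theorem~\ref{spcper}: a Laurent operator $B \in L(U_{-1},U_0,U_1)$ is $1$-periodic, so the ``symbol'' $\sum_{k} B_k e^{-ik\theta}$ reduces to the scalar $u_{-1}e^{i\theta} + u_0 + u_1 e^{-i\theta}$ (note the index bookkeeping: $B_{i+k,i} \in U_k$ contributes $e^{-ik\theta}$, and after a relabelling $\theta \mapsto -\theta$, which does not change the union over $[0,2\pi)$, one gets the stated ellipse). Taking the union over all choices of $u_{-1} \in U_{-1}$, $u_0 \in U_0$, $u_1 \in U_1$ and then the convex hull gives $(ii)$ immediately.

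For equality $(i)$, the inclusion ``$\supseteq$'' is essentially Corollary~\ref{mtnrpe} (or Corollary~\ref{mtnrpe} combined with $\conv(\spec(B)) \subseteq N(B)$ from Theorem~\ref{convspccN}): since every Laurent operator $B \in L(U_{-1},U_0,U_1)$ lies in $M(U_{-1},U_0,U_1)$, we have $\conv(\spec(B)) \subseteq N(B) \subseteq N(A)$, and $N(A)$ is convex by Theorem~\ref{HauToep}, so the convex hull of the union of the $\spec(B)$ is contained in $N(A)$. The real content is the reverse inclusion ``$\subseteq$'': $N(A)$ is contained in the convex hull of the spectra of the Laurent operators. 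By Johnson's method, it suffices to show that for every $\phi \in [0,2\pi)$ the numerical abscissa $r_\phi(A)$ is attained by some Laurent operator, i.e.\ $r_\phi(A) \le \max_{B \in L} r_\phi(B)$ and $r_\phi(B) \le \max_{z \in \spec(B)} \Re(e^{i\phi}z)$ — the latter being trivial once we know the relevant Laurent symbol operator is normal (which it is, being a Laurent operator with a scalar symbol, hence unitarily a multiplication operator).

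So the crux is: $r_\phi(A) \le \max\{r_\phi(B) : B \in L(U_{-1},U_0,U_1)\}$. Fix $\phi$ and recall $r_\phi(A) = r_0(B_\phi)$ where $B_\phi = \tfrac12(e^{i\phi}A + e^{-i\phi}A^*)$ is self-adjoint and, crucially, also tridiagonal and pseudo-ergodic — its diagonals take values in the compact sets $\tfrac12 e^{i\phi}U_0 + \tfrac12 e^{-i\phi}\overline{U_0}$ (diagonal $0$), $\tfrac12 e^{i\phi}U_{-1} + \tfrac12 e^{-i\phi}\overline{U_1}$ (diagonal $-1$), and $\tfrac12 e^{i\phi}U_1 + \tfrac12 e^{-i\phi}\overline{U_{-1}}$ (diagonal $+1$). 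Hence by Corollary~\ref{mtpe}, $r_0(B_\phi) = \max \spec(B_\phi) = \max_{C} \max\spec(C)$ over $C \in M(\ldots)$ of that form; I would then argue that the rightmost point of the spectrum of such a self-adjoint tridiagonal operator is attained by a \emph{periodic} one, and — here is where the tridiagonal structure is essential — in fact by a $1$-periodic (Laurent) one. The key observation is that a Jacobi-type self-adjoint tridiagonal operator can only push its spectral edge outward by oscillating, and one can check by a direct estimate (Weyl sequence / approximate eigenvector supported on a long block) that constant diagonals already achieve the extreme; alternatively one shows the real part of $e^{i\phi}$ times the Laurent ellipse $u_{-1}e^{i\theta}+u_0+u_1e^{-i\theta}$ already sweeps out $[\,?\,,\max_C\max\spec(C)]$. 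Pinning down this last reduction — from periodic self-adjoint tridiagonal to $1$-periodic — is the main obstacle, and I expect the cleanest route is to reverse-engineer it from $(ii)$: compute $\max_{u_k \in U_k} \max_\theta \Re\big(e^{i\phi}(u_{-1}e^{i\theta}+u_0+u_1e^{-i\theta})\big)$ explicitly and show it equals $r_\phi(A)$ by producing, for the maximizing parameters, a normalized approximately-invariant vector for $B_\phi$ (a smooth bump times $e^{i\theta_0 j}$ on a long interval), which is exactly what pseudo-ergodicity supplies. Finally, combining $(i)$ and $(ii)$ with Corollary~\ref{mtpe} gives the ``in particular'' claim: if $\bigcup_{B \in L} \spec(B)$ is convex it equals its own convex hull, which by $(i)$ is $N(A)$, while by Corollary~\ref{mtpe} it is contained in $\spec(A)$; since always $\spec(A) \subseteq \conv(\spec(A)) \subseteq N(A)$, all four sets coincide.
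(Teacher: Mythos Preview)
Your reductions for $(ii)$, the ``in particular'' clause, and the inclusion ``$\supseteq$'' in $(i)$ are correct and match the paper. The gap is in ``$\subseteq$'': you correctly isolate the crux as the upper bound $r_\phi(A) \le \max\{r_\phi(B) : B \in L(U_{-1},U_0,U_1)\}$, but your proposed way to close it is in the wrong direction. Producing, via pseudo-ergodicity, a Weyl-type vector $x$ (a smooth bump times $e^{i\theta_0 j}$ on a long block where $A$ looks like the maximizing Laurent operator) shows that $r_\phi(A) \ge \Re\langle e^{i\phi}Ax,x\rangle \approx r_\phi(B)$, i.e.\ the \emph{lower} bound --- which you already have from ``$\supseteq$''. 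It says nothing about why $r_\phi(A)$ cannot exceed the Laurent maximum. Similarly, invoking Corollary~\ref{mtpe} for the self-adjoint $B_\phi$ only rewrites $r_0(B_\phi)$ as a supremum over \emph{all} tridiagonal $C$ in the relevant class; you still owe an argument that this supremum is attained among the $1$-periodic $C$, and ``a Jacobi-type operator can only push its spectral edge outward by oscillating'' is not one.

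The paper supplies exactly the missing upper bound via the Wiener estimate: for any band operator, the norm is bounded by the sum over diagonals of the sup of the entries on that diagonal. After shifting by $z_0 = \|A\|$ so that everything sits in the right half-plane, one has
\[
r_0(A) = \tfrac12\|A + A^* + 2z_0 I\| - z_0 \le \max_{u_{-1},u_1}\bigl|u_1 + \overline{u_{-1}}\bigr| + \tfrac12\max_{u_0}\bigl|u_0 + \overline{u_0} + 2z_0\bigr| - z_0,
\]
and the right-hand side is precisely $\tfrac12\|B+B^*+2z_0 I\| - z_0 = r_0(B)$ for the Laurent operator $B = L(w_{-1},w_0,w_1)$ with $w_k$ the maximizers, because a self-adjoint tridiagonal Laurent operator has norm equal to the sum of the absolute values of its three diagonal entries. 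This is where tridiagonality is genuinely used (as Example~\ref{ex} shows, the statement fails for five diagonals), and it replaces your unfinished reduction in one line.
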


\begin{proof}
The last assertion follows from $(i)$ since
\[\bigcup\limits_{B \in L(U_{-1},U_0,U_1)} \spec(B) \subseteq \spec(A) \subseteq N(A)\]
by Corollary \ref{mtpe} and Theorem \ref{convspccN}. Moreover, $(ii)$ follows immediately from Theorem \ref{spcper}. We thus focus on the proof of $(i)$.

``$\supseteq$'' : Theorem \ref{convspccN} and  Corollary \ref{mtpe} imply
\[N(A) \supseteq \spec(A) \supseteq \bigcup\limits_{B \in L(U_{-1},U_0,U_1)} \spec(B)\]
as above. Taking the convex hull on both sides yields
\[N(A) \supseteq \conv\left(\bigcup\limits_{B \in L(U_{-1},U_0,U_1)} \spec(B)\right).\]
by Theorem \ref{HauToep}.

``$\subseteq$'': As in the proof of Theorem \ref{mtnr}, it suffices to compare $r_0(A)$ with $\max\limits_{B \in L(U_{-1},U_0,U_1)} r_0(B)$. This then implies
\[N(A) \subseteq \conv\left(\bigcup\limits_{B \in L(U_{-1},U_0,U_1)} N(B)\right)\]
and hence
\[N(A) \subseteq \conv\left(\bigcup\limits_{B \in L(U_{-1},U_0,U_1)} \conv(\spec(B))\right) = \conv\left(\bigcup\limits_{B \in L(U_{-1},U_0,U_1)} \spec(B)\right)\]
because Laurent operators are normal. We also set $z_0 = \norm{A}$ again, which implies $N(B + z_0 I) \subset \C_{\Re \geq 0}$ for all $B \in M(U_{-1},U_0,U_1)$. It follows
\begin{align*}
r_0(A) &= \sup\limits_{\left\|x\right\| = 1} \Re \, \left\langle Ax,x \right\rangle\\
&= \sup\limits_{\left\|x\right\| = 1} \Re \, \left\langle (A + z_0 I)x,x \right\rangle - z_0\\
&= \sup\limits_{\left\|x\right\| = 1} \left|\Re \, \left\langle (A + z_0 I)x,x \right\rangle\right| - z_0\\
&= \frac{1}{2} \sup\limits_{\left\|x\right\| = 1} \left|\left\langle (A + A^* + 2z_0 I)x,x \right\rangle\right| - z_0\\
&= \frac{1}{2} \left\|A + A^* + 2z_0 I\right\| - z_0,
\end{align*}
where we used Theorem \ref{convspccN} in the last line. Using that the norm of an operator is bounded by the sum of the maximal elements of its diagonals (also called Wiener estimate, see e.g.~\cite[p.~25]{Marko}), we arrive at
\begin{align} \label{ineq1}
r_0(A) &= \frac{1}{2} \left\|A + A^* + 2z_0 I\right\| - z_0\notag\\
&\leq \max\limits_{\substack{u_{-1} \in U_{-1} \\ u_1 \in U_1}} \left|u_1 + \overline{u_{-1}}\right| + \frac{1}{2} \max\limits_{u_0 \in U_0} \left|u_0 + \overline{u_0} + 2z_0\right| - z_0.
\end{align}
Fix $w_{-1} \in U_{-1}$, $w_0 \in U_0$ and $w_1 \in U_1$ such that the maximum in \eqref{ineq1} is attained, i.e.
\[\max\limits_{\substack{u_{-1} \in U_{-1} \\ u_1 \in U_1}} \left|e^{i\phi}u_1 + e^{-i\phi}\overline{u_{-1}}\right| = \left|e^{i\phi}w_1 + e^{-i\phi}\overline{w_{-1}}\right|\]
and
\[\max\limits_{u_0 \in U_0} \left|e^{i\phi}u_0 + e^{-i\phi}\overline{u_0} + 2z_0\right| = \left|e^{i\phi}w_0 + e^{-i\phi}\overline{w_0} + 2z_0\right|.\]
It is not hard to see that the spectrum of a tridiagonal Laurent operator $L(v_{-1},v_0,v_1)$ (to simplify the notation we identify the set $L(v_{-1},v_0,v_1) := L(\left\{v_{-1}\right\},\left\{v_0\right\},\left\{v_1\right\})$ with its only element) is given by an ellipse with center $v_0$ and half-axes $\bigl\lvert\lvert v_{-1}\rvert \pm\lvert v_1\rvert\bigr\rvert$ (see e.g. \cite{LiRo}). If in addition $C := L(v_{-1},v_0,v_1)$ is self-adjoint, then its spectrum is given by the interval
\[\spec(C) = [v_0 - \abs{v_{-1}} - \abs{v_1},v_0 + \abs{v_{-1}} + \abs{v_1}]\]
and thus $\left\|C\right\| = \left|v_0\right| + \left|v_{-1}\right| + \left|v_1\right|$. In our case, if we put $B := L(w_{-1},w_0,w_1)$, we get
\[\left\|B + B^* + 2z_0 I\right\| = 2\left|w_1 + \overline{w_{-1}}\right| + \left|w_0 + \overline{w_0} + 2z_0\right|\]
and therefore
\[r_0(A) \leq \frac{1}{2}\left\|B + B^* + 2z_0 I\right\| - z_0.\]
From here we can go all the way back to finish the proof:
\begin{align*}
r_0(A) &\leq \frac{1}{2} \left\|B + B^* + 2z_0 I\right\| - z_0\\
&= \frac{1}{2} \sup\limits_{\left\|x\right\| = 1} \left|\left\langle (B + B^* + 2z_0 I)x,x \right\rangle\right| - z_0\\
&= \sup\limits_{\left\|x\right\| = 1} \left|\Re \, \left\langle (B + z_0 I)x,x \right\rangle\right| - z_0\\
&= \sup\limits_{\left\|x\right\| = 1} \Re \, \left\langle (B + z_0 I)x,x \right\rangle - z_0\\
&= r_0(B).\qedhere
\end{align*}
\end{proof}

Combining Corollary \ref{mtpe}, Theorem \ref{convspccN} and Theorem \ref{ntridiag} we also get the following corollary.

\begin{cor}
Let $U_{-1}$, $U_0$ and $U_1$ be non-empty and compact and let $A \in \Psi E(U_{-1},U_0,U_{1})$. Then $A$ has the following property:
\[N(A) = \conv(\spec(A)).\]
\end{cor}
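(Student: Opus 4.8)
The plan is to sandwich $N(A)$ between $\conv(\spec(A))$ and itself using the three results just cited. First I would invoke Theorem~\ref{convspccN}, which gives $\conv(\spec(A)) \subseteq N(A)$ with no hypothesis beyond boundedness of $A$; this is the easy direction and uses nothing about the tridiagonal or pseudo-ergodic structure.

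For the reverse inclusion I would start from the explicit formula of Theorem~\ref{ntridiag}, namely $N(A) = \conv\bigl(\bigcup_{B \in L(U_{-1},U_0,U_1)} \spec(B)\bigr)$. The point is then to observe that every tridiagonal Laurent operator $B \in L(U_{-1},U_0,U_1)$ lies in $M(U_{-1},U_0,U_1)$, so Corollary~\ref{mtpe} (which identifies $\spec(A)$ with the union of the spectra of all operators in $M(U_{-1},U_0,U_1)$) yields $\spec(B) \subseteq \spec(A)$ for every such $B$. Taking the union over $B \in L(U_{-1},U_0,U_1)$ and then the convex hull, monotonicity of $\conv(\cdot)$ gives $N(A) = \conv\bigl(\bigcup_B \spec(B)\bigr) \subseteq \conv(\spec(A))$.

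Combining the two inclusions closes the argument: $\conv(\spec(A)) \subseteq N(A) \subseteq \conv(\spec(A))$, hence equality. I expect no real obstacle here, since all the work has already been done in Theorem~\ref{ntridiag}; the only thing to handle is the bookkeeping that $L(U_{-1},U_0,U_1) \subseteq M(U_{-1},U_0,U_1)$ and that $\conv$ is monotone under inclusion of compact sets. Equivalently, one could simply reorganize the chain already recorded in Theorem~\ref{ntridiag}, namely $\bigcup_{B \in L}\spec(B) \subseteq \spec(A) \subseteq N(A) = \conv(\bigcup_{B \in L}\spec(B))$, by applying $\conv$ throughout and using $\conv(N(A)) = N(A)$ (Hausdorff-Toeplitz, Theorem~\ref{HauToep}) to collapse it to $N(A) = \conv(\spec(A))$.
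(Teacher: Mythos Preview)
Your proposal is correct and matches the paper's approach exactly: the paper simply states that the corollary follows by combining Corollary~\ref{mtpe}, Theorem~\ref{convspccN}, and Theorem~\ref{ntridiag}, which is precisely the chain of inclusions you spell out. There is nothing to add.
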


This corollary is quite remarkable because one can usually not expect this property from non-normal operators. As a consequence, any tridiagonal random operator has this property almost surely. We do not know if pseudo-ergodic operators with more than three diagonals share this property, but we do know that Theorem \ref{ntridiag} is wrong if the tridiagonality assumption is dropped.

\begin{ex} \label{ex}
Let $U_{-2} = \left\{1\right\}$, $U_{-1} = \left\{\pm 1\right\}$, $U_0 = \left\{0\right\}$, $U_1 = \left\{1\right\}$ and $U_2 = \left\{1\right\}$. Consider the $3$-periodic operator
\[A = \begin{pmatrix} \ddots & \ddots & \ddots & & & \\ \ddots & 0 & 1 & 1 & &\\ \ddots & 1 & 0 & 1 & 1 & \\ & 1 & 1 & 0 & -1 & \ddots \\ & & 1 & 1 & 0 & \ddots \\ & & & \ddots & \ddots & \ddots \end{pmatrix} \in M_{per,3}(U_{-2}, \ldots, U_2).\]
Then
\[B := \frac{1}{2}(A + A^*) = \begin{pmatrix} \ddots & \ddots & \ddots & & & \\ \ddots & 0 & 1 & 1 & &\\ \ddots & 1 & 0 & 1 & 1 & \\ & 1 & 1 & 0 & 0 & \ddots \\ & & 1 & 0 & 0 & \ddots \\ & & & \ddots & \ddots & \ddots \end{pmatrix}.\]
By Theorem \ref{spcper} we get 
\[\spec(B) = \bigcup\limits_{\theta \in [0,2\pi)} \spec(b(\theta)),\]
where
\[b(\theta) := \begin{pmatrix} 0 & 1 + e^{-i\theta} & 1 + e^{-i\theta} \\ 1 + e^{i\theta} & 0 & e^{-i\theta} \\ 1 + e^{i\theta} & e^{i\theta} & 0 \end{pmatrix}.\]
The spectrum of $b(0)$ is given by $\set{\frac{1}{2} - \frac{\sqrt{33}}{2},-1,\frac{1}{2} + \frac{\sqrt{33}}{2}}$. So in particular, we get $\frac{1}{2} - \frac{\sqrt{33}}{2} \in \spec(B)$ and thus
\[r_{\pi}(A) = r_0(-A) \geq \frac{\sqrt{33}}{2} - \frac{1}{2} > \frac{9}{4}.\]
Let us denote the two operators in $L(U_{-2}, \ldots, U_2)$ by $C_1$ and $C_2$. We get
\[\min\limits_{z \in \spec(C_1)} \Re \, z = \min\limits_{\theta \in [0,2\pi)} \Re\left(e^{-2i\theta} + e^{-i\theta} + e^{i\theta} + e^{2i\theta}\right) = \min\limits_{\theta \in [0,2\pi)} 2(\cos(2\theta) + \cos(\theta)) = -\frac{9}{4}\]
and
\[\min\limits_{z \in \spec(C_2)} \Re \, z = \min\limits_{\theta \in [0,2\pi)} \Re\left(e^{-2i\theta} + e^{-i\theta} - e^{i\theta} + e^{2i\theta}\right) = \min\limits_{\theta \in [0,2\pi)} 2\cos(2\theta) = -2 \]
by Theorem \ref{spcper} again. This implies that the numerical range of $A$ exceeds the convex hull of the spectra of Laurent operators in the direction of the negative real axis, i.e.
\[N(A) \not\subseteq \conv\left(\bigcup\limits_{B \in L(U_{-1},U_0,U_1)} \spec(B)\right).\]
So in particular, in view of Corollary \ref{mtnrpe}, Theorem \ref{ntridiag} is not valid for five diagonals.
\end{ex}

\subsection{A Method to Compute Numerical Ranges for General Tridiagonal Operators} \label{method}

In this section we introduce a method to compute numerical ranges for tridiagonal operators. As explained at the beginning of Section \ref{TheNumericalRange}, it suffices to compute the numerical abscissae $r_{\phi}$ for $\phi \in [0,2\pi)$. Fix $\phi \in [0,2\pi)$ and recall that we have $r_{\phi}(A) = r_0(B)$ for $B := \frac{1}{2}(e^{i\phi}A + e^{-i\phi}A^*)$. In case $A$ is a tridiagonal infinite matrix acting on $\ell^2(\N)$ or $\ell^2(\Z)$, the non-zero entries of $B$ are given by
\begin{align*}
B_{j,j-1} &= \frac{1}{2}(e^{i\phi}A_{j,j-1} + e^{-i\phi}\overline{A_{j-1,j}}),\\
B_{j,j} &= \frac{1}{2}(e^{i\phi}A_{j,j} + e^{-i\phi}\overline{A_{j,j}}) = \Re(e^{i\phi}A_{j,j}),\\
B_{j,j+1} &= \frac{1}{2}(e^{i\phi}A_{j,j+1} + e^{-i\phi}\overline{A_{j+1,j}})\\
\end{align*}
for all $j$ in the respective index set. $B$ can now be transformed to a real symmetric matrix by applying the unitary diagonal transformation that is defined recursively as follows:
\begin{align*}
T_{1,1} &= 1,\\
T_{j+1,j+1} &= \sign(B_{j,j+1})T_{j,j}
\end{align*}
for all $j$ in the respective index set, where $\sign \from \C \to \T$ is defined as
\[\sign(z) := \begin{cases} \frac{z}{\abs{z}} & \text{if } z \neq 0, \\ 1 & \text{if } z = 0.\end{cases}\]
$C := TBT^*$ is then real and symmetric with $r_0(C) = r_0(B) = r_{\phi}(A)$ and
\begin{align} \label{C}
C_{j,j} &= \Re(e^{i\phi}A_{j,j}) \in \R,\notag\\
C_{j,j+1} &= \frac{1}{2}\abs{e^{i\phi}A_{j,j+1} + e^{-i\phi}\overline{A_{j+1,j}}} \geq 0.
\end{align}
Thus the computation of $r_{\phi}(A)$ is reduced to the computation of $r_0(C)$, which is also the rightmost point in the spectrum of $C$. In the following we can also assume that $C_{j,j+1} > 0$ for all $j$ because if $C_{j,j+1} = 0$ for some $j$, then $C$ can be divided into blocks and the spectrum of $C$ is then given by the closure of the union of the spectra of these blocks. Moreover, shifting $C$ by $\lambda I$ for some $\lambda \in \R$ only shifts the spectrum of $C$ by $\lambda$. Thus we can also assume that $C$ only has positive entries on its main diagonal.

This matrix $C$ now satisfies the requirements of the following lemma by Szwarc\footnote{Szwarc \cite{Szwarc} actually proved it for $C \in \Lc(\ell^2(\Z))$, but the proof is very similar for $C \in \Lc(\ell^2(\N))$.} that is basically a reformulation of the Schur test.

\begin{lem} \label{SzwarcLemma1}
(\cite[Proposition 1]{Szwarc})\\
Let $C \in \Lc(\ell^2(\N))$ be real, symmetric and tridiagonal with $C_{j,j}, C_{j,j+1} > 0$ for all $j \in \N$ and $N > \sup\limits_{j \in \N} C_{j,j}$. If there is a sequence $(g_j)_{j \in \N}$ that satisfies $g_j \in [0,1]$ and
\begin{equation} \label{BoundCondition}
\frac{C_{j,j+1}^2}{(N - C_{j,j})(N - C_{j+1,j+1})} \leq g_{j+1}(1 - g_j)
\end{equation}
for all $j \in \N$, then $r_0(C) \leq N$.
\end{lem}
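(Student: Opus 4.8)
The plan is to rephrase $r_0(C)\le N$ as positive semidefiniteness of $NI-C$, and then to certify the latter by writing the quadratic form $\sp{(NI-C)x}{x}$ as a sum of elementary two-variable quadratic forms, one for each off-diagonal entry, each of which is nonnegative precisely because of \eqref{BoundCondition}. First I would record that, since $C$ is self-adjoint, $r_0(C)=\sup_{\norm{x}=1}\sp{Cx}{x}$, so $r_0(C)\le N$ is equivalent to $\sp{(NI-C)x}{x}\ge 0$ for every $x\in\ell^2(\N)$. As $C$ is real and tridiagonal with $C_{j,j+1}>0$,
\[\sp{(NI-C)x}{x}=\sum_{j}(N-C_{j,j})\abs{x_j}^2-2\sum_{j}C_{j,j+1}\Re(x_j\overline{x_{j+1}})\ge\sum_{j}(N-C_{j,j})\abs{x_j}^2-2\sum_{j}C_{j,j+1}\abs{x_j}\abs{x_{j+1}},\]
so it suffices to prove nonnegativity of the right-hand side, i.e.\ to assume $x$ has nonnegative real entries. (All series here converge absolutely because $C\in\Lc(\ell^2(\N))$ and $x\in\ell^2(\N)$.)

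The heart of the argument is a telescoping identity. For $j\in\N$ set
\[Q_j(s,t):=(1-g_j)(N-C_{j,j})\,s^2-2C_{j,j+1}\,st+g_{j+1}(N-C_{j+1,j+1})\,t^2.\]
Splitting each diagonal weight $N-C_{j,j}$ via $1=(1-g_j)+g_j$ and reindexing, the partial sums satisfy
\[\sum_{j=1}^{M}Q_j(x_j,x_{j+1})=\sum_{j=1}^{M}(N-C_{j,j})x_j^2-2\sum_{j=1}^{M}C_{j,j+1}x_jx_{j+1}-g_1(N-C_{1,1})x_1^2+g_{M+1}(N-C_{M+1,M+1})x_{M+1}^2.\]
Letting $M\to\infty$, the last term tends to $0$ (since $x_{M+1}\to 0$, $g_{M+1}\in[0,1]$, and $N-C_{j,j}$ is bounded), so $\sp{(NI-C)x}{x}=g_1(N-C_{1,1})x_1^2+\sum_{j=1}^{\infty}Q_j(x_j,x_{j+1})$.

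It then remains to check that each $Q_j$ is nonnegative on $\R^2$. From \eqref{BoundCondition}, together with $N>\sup_j C_{j,j}$ and $C_{j,j+1}>0$, we get $g_{j+1}(1-g_j)>0$, hence $1-g_j>0$ and $g_{j+1}>0$; thus $Q_j$ is a quadratic form with strictly positive leading coefficient, and $Q_j\ge 0$ on $\R^2$ if and only if its discriminant $4C_{j,j+1}^2-4(1-g_j)(N-C_{j,j})g_{j+1}(N-C_{j+1,j+1})$ is $\le 0$, which is just \eqref{BoundCondition} rewritten. Combining, $\sp{(NI-C)x}{x}\ge 0$ for all $x$, so $r_0(C)\le N$.

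There is no deep obstacle here: the Schur-test content is entirely packaged into \eqref{BoundCondition}. The two points requiring a little care are the boundary term at $j=1$, where the weight $g_1(N-C_{1,1})x_1^2$ is simply carried along as an extra nonnegative summand (on $\ell^2(\Z)$ this term is absent and the decomposition is perfectly symmetric, which is Szwarc's original setting), and the justification of the limit $M\to\infty$ in the telescoping identity, which rests on $x\in\ell^2(\N)$ and the boundedness of $C$. I would also make explicit at the outset that the hypotheses force $g_j\in(0,1)$ along every edge, since this is exactly what legitimizes the ``positive leading term plus nonpositive discriminant'' criterion for $Q_j$.
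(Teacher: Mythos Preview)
Your argument is correct. The paper does not prove this lemma; it only cites Szwarc and remarks that the result ``is basically a reformulation of the Schur test.'' Your quadratic-form decomposition is precisely such a realization: splitting each diagonal weight as $(1-g_j)+g_j$ and grouping terms into the binary forms $Q_j$ is the quadratic-form analogue of choosing Schur weights, and the discriminant condition you arrive at is exactly \eqref{BoundCondition}. The telescoping bookkeeping, the treatment of the boundary term $g_1(N-C_{1,1})x_1^2$, and the passage to the limit $M\to\infty$ are all handled correctly; note in particular that $N-C_{j,j}$ is bounded because $\abs{C_{j,j}}\le\norm{C}$. One cosmetic point: your claim that ``the hypotheses force $g_j\in(0,1)$ along every edge'' should be read as $1-g_j>0$ and $g_{j+1}>0$ for each $j$ (so $g_1$ may be $0$), which is exactly what you use and exactly what \eqref{BoundCondition} together with $C_{j,j+1}>0$ and $N>C_{j,j}$ gives.
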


In fact, also the converse is true:

\begin{lem} \label{SzwarcLemma2}
(\cite[Proposition 2]{Szwarc})\\
Let $C \in \Lc(\ell^2(\N))$ be real, symmetric and tridiagonal with $C_{j,j}, C_{j,j+1} > 0$ for all $j \in \N$. Then there exists a sequence $(g_j)_{j \in \N}$ with the following properties:
\begin{itemize}
	\item $g_j \in [0,1)$ for all $j \in \N$,
	\item $g_j = 0$ if and only if $j = 1$,
	\item the following equality holds for all $j \in \N$:
	\begin{equation} \label{BoundProperty}
	\frac{C_{j,j+1}^2}{(r_0(C) - C_{j,j})(r_0(C) - C_{j+1,j+1})} = g_{j+1}(1 - g_j).
	\end{equation}
\end{itemize}
\end{lem}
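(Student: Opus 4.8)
The plan is to construct the sequence $(g_j)_{j \in \N}$ recursively, reading off \eqref{BoundProperty} as a recurrence for $g_{j+1}$ in terms of $g_j$. Write $R := r_0(C)$ and set $a_j := \frac{C_{j,j+1}^2}{(R - C_{j,j})(R - C_{j+1,j+1})}$; note that since $R = r_0(C)$ is the rightmost point of the spectrum of the self-adjoint operator $C$ and $C$ is not a scalar, we have $R > \sup_j C_{j,j}$ (strictly — this uses that the off-diagonal entries are positive, so $C - C_{j,j}I$ cannot have $0$ as a boundary point of the spectrum via a localized eigenvector argument; alternatively, $R \ge C_{j,j} + C_{j,j+1}\cdot(\text{something positive})$ from testing on two-dimensional subspaces), hence each $a_j > 0$ is well-defined. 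Starting from $g_1 = 0$, the recurrence $g_{j+1} = \frac{a_j}{1 - g_j}$ then determines the whole sequence, provided we can show inductively that $g_j$ stays in $[0,1)$ so that the division is legitimate and the next term is again nonnegative and strictly below $1$.

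The heart of the argument — and the step I expect to be the main obstacle — is precisely this invariant: that the recursively defined $g_j$ never reaches or exceeds $1$. This is where $R = r_0(C)$ being the \emph{exact} rightmost spectral point, rather than merely an upper bound, must enter. The natural route is a comparison with Lemma \ref{SzwarcLemma1} applied to truncations: for each $n$, the finite section $C^{(n)} := P_{1,n} C P_{1,n}$ has $r_0(C^{(n)}) \le r_0(C) = R$ (finite sections of a self-adjoint tridiagonal operator have rightmost eigenvalue bounded by $R$, e.g. since the numerical range of $C^{(n)}$ sits inside that of $C$). For the finite matrix one can solve \eqref{BoundProperty} with equality \emph{exactly} using the three-term recursion satisfied by the characteristic polynomials / the continued-fraction expansion of the resolvent at $R$: defining $g_j^{(n)}$ via the same recurrence but anchored so that the chain terminates correctly at index $n$, one gets $g_j^{(n)} \in [0,1)$ because $R$ is not an eigenvalue of $C^{(n)}$ (so no denominator vanishes) and because the associated Jacobi-matrix orthogonal polynomials evaluated at a point to the right of the spectrum are all positive, forcing the continued fraction to converge to values in $[0,1)$. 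One then shows $g_j^{(n)}$ is monotone in $n$ (increasing, say) and bounded above by something strictly less than $1$ uniformly, passes to the limit $g_j := \lim_{n\to\infty} g_j^{(n)}$, and checks that the limit still satisfies the recurrence and the strict bound $g_j < 1$ — the strictness at the limit being the delicate point, since a priori the limit could be $1$. Ruling that out requires using that $R$ is exactly the spectral edge: if some $g_j = 1$ one could truncate the tail and produce, via Lemma \ref{SzwarcLemma1} with $N$ slightly less than $R$, a contradiction with $R$ being attained by the spectrum; alternatively the continued fraction $f(R) := \langle (R - C)^{-1} e_1, e_1\rangle^{-1}$-type quantity is finite and positive exactly because $R \notin \spec(C)$ is false but $R$ is only a boundary point — one works instead with $R + \epsilon$, gets strict bounds, and lets $\epsilon \downarrow 0$ with a monotonicity/continuity argument.

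The remaining bookkeeping is routine: the condition $g_1 = 0$ is built in by construction, and $g_j = 0$ for $j \ge 2$ is impossible since $g_j = \frac{a_{j-1}}{1 - g_{j-1}}$ with $a_{j-1} > 0$ and $1 - g_{j-1} > 0$, so $g_j > 0$ there; this gives the "if and only if" clause. Finally \eqref{BoundProperty} holds for every $j$ by the very definition of the recurrence once well-definedness has been secured. So the only genuine work is establishing, uniformly in $j$, the two-sided bound $0 \le g_j < 1$ for the limiting sequence, and the cleanest way I see to get the strict upper bound is the truncation-plus-limit argument above, leaning on Lemma \ref{SzwarcLemma1} (with $N$ approaching $R$ from above) to preclude $g_j = 1$, together with the positivity of orthogonal polynomials of a Jacobi matrix outside the spectrum to get $g_j^{(n)} \in [0,1)$ at each finite stage.
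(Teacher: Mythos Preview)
The paper does not prove this lemma: it is quoted verbatim from Szwarc \cite[Proposition 2]{Szwarc} with no argument supplied. So there is no ``paper's own proof'' to compare against; what follows is an assessment of your strategy on its own merits.

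Your outline contains the right ingredients but packages them in an unnecessarily indirect way, and the part you yourself flag as delicate --- passing to a limit (in $n$ or in $\epsilon$) while retaining the \emph{strict} inequality $g_j < 1$ --- is not actually resolved by either of the two mechanisms you sketch. The cleaner route, which is the standard one and avoids any limiting procedure, is to write down $g_j$ explicitly in terms of the orthogonal polynomials you already invoke. Let $p_{-1} := 0$, $p_0 := 1$, and
\[
C_{n,n+1}\, p_n(x) \;=\; (x - C_{n,n})\, p_{n-1}(x) \;-\; C_{n-1,n}\, p_{n-2}(x)\qquad (n \ge 1),
\]
so that $p_n$ is a positive multiple of the characteristic polynomial of the $n\times n$ principal truncation $C^{(n)}$. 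You have already observed that $R := r_0(C)$ strictly exceeds every eigenvalue of every $C^{(n)}$ (if $C^{(n)}v = Rv$, extend $v$ by zeros; then $\langle C\tilde v,\tilde v\rangle = R\|\tilde v\|^2$ forces $\tilde v$ to be an eigenvector of $C$, which it is not since $(C-R)\tilde v$ has the nonzero entry $C_{n+1,n}v_n$). Hence $p_n(R) > 0$ for all $n \ge 0$. Now simply \emph{define}
\[
g_j \;:=\; \frac{C_{j-1,j}\, p_{j-2}(R)}{(R - C_{j,j})\, p_{j-1}(R)} \qquad (j \ge 1).
\]
Dividing the three-term recurrence at $x = R$ by $(R - C_{j,j})\, p_{j-1}(R)$ gives $1 - g_j = \dfrac{C_{j,j+1}\, p_j(R)}{(R - C_{j,j})\, p_{j-1}(R)}$, and multiplying this by the definition of $g_{j+1}$ yields \eqref{BoundProperty} on the nose. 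From the formula one reads off immediately: $g_1 = 0$ (since $p_{-1} = 0$); $g_j > 0$ for $j \ge 2$ (all factors positive); and $g_j < 1$ for all $j$ (since $1 - g_j$ is a ratio of positive quantities). No truncation, no $\epsilon$, no monotonicity argument, and in particular no ``strictness at the limit'' issue --- the strict inequality $g_j < 1$ is built into the formula. Your recursive definition $g_{j+1} = a_j/(1 - g_j)$ with $g_1 = 0$ produces exactly this same sequence, so the only thing missing from your proposal is the realization that the closed-form expression above is available and makes the well-definedness and the bounds automatic.
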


To demonstrate the procedure, we prove the following proposition that we need later on. Note that this proposition can also be shown using Theorem \ref{spcper} applied to $B := \frac{1}{2}(A + A^*)$. This results in the computation of the eigenvalues of a $2 \times 2$ matrix and one obtains Corollary \ref{2percor} directly.

\begin{prop} \label{2per}
Let $\I \in \set{\N,\Z}$, let $A \in \Lc(\ell^2(\I))$ be tridiagonal and $2$-periodic and let $N > \sup\limits_{i \in \I} \Re \, A_{i,i}$. Further assume that $A + A^*$ is not diagonal. Define 
\[\eta_1(A) := \frac{\left|A_{1,2} + \overline{A_{2,1}}\right|^2}{4(N - \Re \, A_{1,1})(N - \Re \, A_{2,2})}, \qquad \eta_2(A) := \frac{\left|A_{2,3} + \overline{A_{3,2}}\right|^2}{4(N- \Re \, A_{2,2})(N - \Re \, A_{3,3})}.\]
Then we have $\sqrt{\eta_1(A)} + \sqrt{\eta_2(A)} = 1$ if and only if $N = r_0(A)$.
\end{prop}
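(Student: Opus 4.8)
The plan is to push everything through the real symmetric tridiagonal matrix attached to $B := \frac{1}{2}(A+A^*)$ by the construction of Section~\ref{method} with $\phi = 0$. Let $T$ be the unitary diagonal matrix defined there and set $C := TBT^*$, so that by \eqref{C} we have $C_{j,j} = \Re A_{j,j}$, $C_{j,j+1} = \frac{1}{2}\abs{A_{j,j+1}+\overline{A_{j+1,j}}} \ge 0$, and $r_0(C) = r_0(B) = r_0(A)$. Since $A$ (hence $B$) is $2$-periodic and the recursion $T_{j+1,j+1} = \sign(B_{j,j+1})T_{j,j}$ makes $T_{j+2,j+2}/T_{j,j}$ a constant unimodular number, $C$ is again tridiagonal and $2$-periodic. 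Writing $c_1 := C_{1,2}$, $c_2 := C_{2,3}$ (so $C_{3,3} = C_{1,1}$), the hypothesis that $A+A^*$ is not diagonal becomes $(c_1,c_2)\neq(0,0)$, $\sup_i \Re A_{i,i} = \max(C_{1,1},C_{2,2})$, and, with $p(N) := (N-C_{1,1})(N-C_{2,2}) > 0$ for $N$ in the admissible range, one reads off $\eta_1(A) = c_1^2/p(N)$, $\eta_2(A) = c_2^2/p(N)$, hence $\sqrt{\eta_1(A)}+\sqrt{\eta_2(A)} = (c_1+c_2)/\sqrt{p(N)}$. As $N\mapsto p(N)$ is a strictly increasing continuous bijection of $(\max(C_{1,1},C_{2,2}),\infty)$ onto $(0,\infty)$, there is a unique $N_0$ in this range with $p(N_0) = (c_1+c_2)^2$, and the whole proposition reduces to the single assertion $r_0(C) = N_0$ (because then $N = r_0(A) = N_0$ iff $p(N) = (c_1+c_2)^2$ iff $\sqrt{\eta_1(A)}+\sqrt{\eta_2(A)} = 1$). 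Subtracting a suitable real multiple of $I$ from $A$ and from $N$, which changes none of these quantities nor the truth of $N = r_0(A)$, I may also assume $C_{j,j} > 0$.

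It remains to prove $r_0(C) = N_0$. Suppose first $c_1,c_2 > 0$. For the inequality $r_0(C)\le N_0$ I would apply Lemma~\ref{SzwarcLemma1} to $C$ with threshold $N_0$ and the $2$-periodic sequence $g_{2k-1} := c_2/(c_1+c_2)$, $g_{2k} := c_1/(c_1+c_2)$, both in $[0,1]$: for $j$ odd, $g_{j+1}(1-g_j) = c_1^2/(c_1+c_2)^2 = c_1^2/p(N_0) = C_{j,j+1}^2/\bigl((N_0-C_{j,j})(N_0-C_{j+1,j+1})\bigr)$, and symmetrically for $j$ even, so \eqref{BoundCondition} holds with equality and $r_0(C)\le N_0$. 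When one of $c_1,c_2$ vanishes, $C_{j,j+1} = 0$ for infinitely many $j$, so $C$ splits into a direct sum of $2\times2$ blocks with diagonal entries $C_{1,1},C_{2,2}$ and off-diagonal entry $c_1+c_2$ (together with, when $\I = \N$, at most one $1\times1$ block whose eigenvalue is dominated); then $r_0(C)$ equals the top eigenvalue $\frac{C_{1,1}+C_{2,2}}{2} + \sqrt{\bigl(\frac{C_{1,1}-C_{2,2}}{2}\bigr)^2 + (c_1+c_2)^2}$, which is exactly $N_0$ since $p(N_0) = (c_1+c_2)^2$ and $N_0 > \frac{C_{1,1}+C_{2,2}}{2}$; so the degenerate case is already settled.

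For the matching lower bound $r_0(C)\ge N_0$ I would exhibit approximate eigenvectors. Put $\alpha := \sqrt{N_0-C_{2,2}}$, $\beta := \sqrt{N_0-C_{1,1}}$ (both positive, with $\alpha\beta = \sqrt{p(N_0)} = c_1+c_2$) and, for $M\in\N$, let $y^{(M)}\in\ell^2(\I)$ be supported on $\{1,\dots,2M\}$ with $y^{(M)}_j = \alpha$ for $j$ odd and $y^{(M)}_j = \beta$ for $j$ even, i.e.\ a truncated Bloch wave at $\theta = 0$. A direct telescoping computation using $\alpha\beta = c_1+c_2$ gives $\norm{y^{(M)}}^2 = M(2N_0-C_{1,1}-C_{2,2})$ and $\sp{Cy^{(M)}}{y^{(M)}} = N_0\norm{y^{(M)}}^2 - 2c_2(c_1+c_2)$, a constant deficit; hence $\sp{Cy^{(M)}}{y^{(M)}}/\norm{y^{(M)}}^2 \to N_0$, so $r_0(C) = \sup_{\norm{x}=1}\Re\sp{Cx}{x}\ge N_0$. (Alternatively, staying within the method of this section: apply Lemma~\ref{SzwarcLemma2} at $r_0(C)$; the even-indexed subsequence of the resulting sequence $(g_j)$ evolves by an increasing self-map, hence is monotone and bounded and converges to a fixed point, and since $\eta_1,\eta_2\in(0,1)$ there the fixed-point equation forces $\sqrt{\eta_1}+\sqrt{\eta_2}\le1$ at $r_0(C)$, i.e.\ $r_0(C)\ge N_0$.) Combining the two bounds gives $r_0(C) = N_0$, which is the proposition.

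The step I expect to be the main obstacle is this lower bound: extracting the sharp equality $\sqrt{\eta_1}+\sqrt{\eta_2} = 1$ directly from Szwarc's Lemma~\ref{SzwarcLemma2} is not automatic, since it requires the monotone-subsequence observation above together with some care at the boundary of the fixed-point map, which is why I would prefer the explicit truncated-Bloch-wave vectors, whose only delicate point is bookkeeping the $O(1)$ boundary contribution. A secondary, purely administrative point is to check that the sign-conjugation $T$ of Section~\ref{method} preserves the $2$-periodicity of $C$, as this is what lets $\eta_1(A)$ and $\eta_2(A)$ be identified with the two ratios $c_1^2/p(N)$ and $c_2^2/p(N)$.
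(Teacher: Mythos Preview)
Your argument is correct, and it is cleaner than the paper's. The key simplification you make, which the paper does not exploit, is the algebraic identity $\sqrt{\eta_1}+\sqrt{\eta_2} = (c_1+c_2)/\sqrt{p(N)}$ with $p(N)=(N-C_{1,1})(N-C_{2,2})$; this reduces the whole proposition to the single equality $r_0(C)=N_0$ where $p(N_0)=(c_1+c_2)^2$, and lets you test Lemma~\ref{SzwarcLemma1} with the \emph{periodic} sequence $g_{2k-1}=c_2/(c_1+c_2)$, $g_{2k}=c_1/(c_1+c_2)$, making the verification of \eqref{BoundCondition} a one-line computation. The paper instead takes the sequence produced by Lemma~\ref{SzwarcLemma2} with $g_1=0$, analyses the odd-indexed subsequence via the iteration function \eqref{IterationFunction}, shows it is increasing and converges to a real fixed point, and reads off $\sqrt{\eta_1}+\sqrt{\eta_2}\le1$ from the discriminant condition; the converse direction is handled by building a (non-periodic) sequence starting at $g_1=0$ and checking it stays in $[0,1]$. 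For the lower bound you give a direct truncated-Bloch-wave computation, whereas the paper stays entirely inside the Szwarc machinery. Your parenthetical alternative (monotone subsequence of the Lemma~\ref{SzwarcLemma2} sequence converging to a fixed point) is precisely the paper's route. What the paper's longer argument buys is a worked demonstration of how Lemmas~\ref{SzwarcLemma1}--\ref{SzwarcLemma2} interact in a nontrivial case --- which is its declared purpose, and which is the template reused in Propositions~\ref{case1} and~\ref{case2}; what your argument buys is brevity and an immediate derivation of the closed form in Corollary~\ref{2percor} (indeed the paper remarks just before the proposition that the $2\times2$ eigenvalue route gives that corollary directly).
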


\begin{proof}
Clearly, $A \in \Lc(\ell^2(\Z))$ and $P_{\N}AP_{\N}|_{P_{\N}} \in \Lc(\ell^2(\N))$ have the same numerical range by Proposition \ref{selfsim}. It thus suffices to consider the case $A \in \Lc(\ell^2(\N))$. Let $C$ be as in $\eqref{C}$ with $\phi = 0$ so that $r_0(A) = r_0(C)$. We can assume that $C_{j,j} > 0$ for all $j \in \N$ (shifting by $\lambda \in \R$ does not change anything).

If $A_{1,2} + \overline{A_{2,1}} = 0$. Then $\eta_1(A) = 0$ and an easy computation shows $\eta_2(A) = 1$ if and only if $N = r_0(A)$. The case $A_{2,3} + \overline{A_{3,2}} = 0$ is similar. So let us assume $A_{j,j+1} + \overline{A_{j+1,j}} \neq 0$ for all $j \in \N$ for the rest of the proof. Clearly, this implies $\eta_1(A), \eta_2(A) > 0$ and $C_{j,j+1} > 0$ for all $j \in \N$.

Let $N = r_0(A)$. Lemma \ref{SzwarcLemma2} applied to $C$ yields a sequence $(g_j)_{j \in \N}$ with the properties
\begin{itemize}
	\item $g_j \in [0,1)$ for all $j \in \N$
	\item $g_j = 0$ if and only if $j = 1$
	\item the following equality holds for all $j \in \N$:
	\[\frac{\abs{A_{j,j+1} + \overline{A_{j+1,j}}}^2}{4(r_0(A) - \Re A_{j,j})(r_0(A) - \Re A_{j+1,j+1})} = g_{j+1}(1 - g_j).\]
\end{itemize}
Since $A$ is $2$-periodic, we have
\begin{align*}
&\eta_1(A) = \frac{\abs{A_{1,2} + \overline{A_{2,1}}}^2}{4(r_0(A) - \Re \, A_{1,1})(r_0(A) - \Re \, A_{2,2})} = g_2\\
&\eta_2(A) = \frac{\abs{A_{2,3} + \overline{A_{3,2}}}^2}{4(r_0(A) - \Re \, A_{2,2})(r_0(A) - \Re \, A_{1,1})} = g_3(1-g_2)\\
&\eta_1(A) = \frac{\abs{A_{1,2} + \overline{A_{2,1}}}^2}{4(r_0(A) - \Re \, A_{1,1})(r_0(A) - \Re \, A_{2,2})} = g_4(1-g_3)\\
&\quad \vdots \qquad \qquad \qquad \qquad \qquad \qquad \qquad \qquad \quad \vdots
\end{align*}
We observe $\eta_1(A) = g_2 \in (0,1)$ and $\eta_2(A) = g_3(1-g_2) \in (0,1)$. If $j$ is odd, we deduce the following recursion:
\begin{equation} \label{OddRecursion}
g_{j+2} = \frac{\eta_2(A)}{1-g_{j+1}} = \frac{\eta_2(A)}{1-\frac{\eta_1(A)}{1 - g_j}} = \frac{(1 - g_j)\eta_2(A)}{1 - g_j - \eta_1(A)}.
\end{equation}
The corresponding iteration function
\begin{align} \label{IterationFunction}
&f: (0,1 - \eta_1(A)) \to \R,\notag\\
&x \mapsto \frac{(1 - x)\eta_2(A)}{1 - x - \eta_1(A)}
\end{align}
has a positive derivative
\begin{equation} \label{df/dx}
\frac{\text{d}}{\text{d}x} \frac{(1 - x)\eta_2(A)}{1 - x - \eta_1(A)} = \frac{\eta_1(A)\eta_2(A)}{(1 - x - \eta_1(A))^2} > 0
\end{equation}
since $\eta_1(A),\eta_2(A) > 0$. Thus $f$ is strictly increasing. Since $(g_j)_{j \in 2\N-1}$ is a sequence in $[0,1)$, it is in fact a sequence in $[0,1-\eta_1(A))$. Indeed, if $g_j \geq 1-\eta_1(A)$, then by Equation \eqref{OddRecursion}, $g_{j+2}$ is either not defined or negative, a contradiction. Moreover, we have
\[g_3 = \frac{\eta_2(A)}{1-\eta_1(A)} > 0 = g_1\]
since $\eta_1(A),\eta_2(A) \in (0,1)$. We conclude that $(g_j)_{j \in 2\N-1}$ is strictly increasing, hence convergent. Denote the limit of this sequence by $x^*$. By the fixed-point theorem, $x^*$ has to be a fixed point of the iteration function $f$. After some rearranging, we get two possible candidates for a fixed point:
\begin{align} \label{FixedPoint}
\frac{(1 - x^*)\eta_2(A)}{1 - x^* - \eta_1(A)} = x^* \quad &\Leftrightarrow \quad (1 - x^*)\eta_2(A) = x^*(1 - x^* - \eta_1(A))\notag\\
&\Leftrightarrow \quad (x^*)^2 - (1 + \eta_2(A) - \eta_1(A))x^* + \eta_2(A) = 0\notag\\
&\Leftrightarrow \quad x^* = \frac{1 + \eta_2(A) - \eta_1(A) \pm \sqrt{(1 + \eta_2(A) - \eta_1(A))^2 - 4\eta_2(A)}}{2}.
\end{align}
Of course the fixed point we are looking for has to be real and thus $(1 + \eta_2(A) - \eta_1(A))^2 - 4\eta_2(A)$ has to be non-negative. It follows
\begin{align*}
0 &\leq (1 + \eta_2(A) - \eta_1(A))^2 - 4\eta_2(A)\\
&= 1 + \eta_2(A)^2 + \eta_1(A)^2 + 2\eta_2(A) - 2\eta_1(A) - 2\eta_1(A)\eta_2(A) - 4\eta_2(A)\\
&= \eta_2(A)^2 - 2(1 + \eta_1(A))\eta_2(A) + (1 - \eta_1(A))^2.
\end{align*}
Solving for $\eta_2(A)$ yields
\[\eta_2(A) \leq 1 + \eta_1(A) - \sqrt{(1 + \eta_1(A))^2 - (1 - \eta_1(A))^2} = 1 + \eta_1(A) - 2\sqrt{\eta_1(A)} = (1 - \sqrt{\eta_1(A)})^2,\]
since $\eta_2(A) < 1$. This inequality now implies $\sqrt{\eta_1(A)} + \sqrt{\eta_2(A)} \leq 1$. As we will prove later, this inequality is actually an equality.

Conversely, let $\sqrt{\eta_1(A)} + \sqrt{\eta_2(A)} = 1$. Of course, we can again assume that $\I = \N$. Define the sequence $(g_j)_{j \in \N}$ as follows:
\begin{align*}
g_1 &:= 0,\\
g_{j+1} &:= \frac{\eta_1(A)}{1 - g_j} \quad \text{if $j$ is odd},\\
g_{j+1} &:= \frac{\eta_2(A)}{1 - g_j} \quad \text{if $j$ is even}.
\end{align*}
In order to apply Lemma \ref{SzwarcLemma1}, we have to check $g_j \in [0,1]$ for all $j \in \N$. Let us first consider $(g_j)_{j \in 2\N-1}$ and its iteration function \eqref{IterationFunction}. As seen in \eqref{FixedPoint} the fixed points of $f$ are given by 
\[x^* = \frac{1 + \eta_2(A) - \eta_1(A) \pm \sqrt{(1 + \eta_2(A) - \eta_1(A))^2 - 4\eta_2(A)}}{2}.\]
Plugging our assumption $\sqrt{\eta_1(A)} + \sqrt{\eta_2(A)} = 1$ into this equation, we get
\begin{align*}
x^* &= \frac{1 + \eta_2(A) - (1 - \sqrt{\eta_2(A)})^2 \pm \sqrt{(1 + \eta_2(A) - (1 - \sqrt{\eta_2(A)})^2)^2 - 4\eta_2(A)}}{2}\\
&= \sqrt{\eta_2(A)} \pm \frac{\sqrt{4\eta_2(A) - 4\eta_2(A)}}{2}\\
&= \sqrt{\eta_2(A)}.
\end{align*}
Thus there is only one fixed point and $x^* < 1$. By \eqref{df/dx}, the iteration function $f$ is strictly increasing in $(0,1 - \eta_1(A))$, while
\[1 - \eta_1(A) = 1 - (1 - \sqrt{\eta_2(A)})^2 = 2\sqrt{\eta_2(A)} - \eta_2(A) > \sqrt{\eta_2(A)}\]
since $\eta_2(A) = x^* < 1$. Furthermore, $g_1 = 0$ and thus $0 \leq g_j \leq x^* < 1$ for all $j \in 2\N-1$. We conclude $g_j \in [0,1]$ for odd $j$. Similarly (exchanging $\eta_1(A)$ and $\eta_2(A)$ and using the starting point $\eta_1(A) < \sqrt{\eta_1(A)} < 1$), we also get $g_j \in [0,1]$ for even $j$. Furthermore, Condition \eqref{BoundCondition} is fulfilled by definition. Thus $(g_j)_{j \in \N}$ meets all the requirements and we can apply Lemma \ref{SzwarcLemma1} to $C$, which implies $r_0(C) = r_0(A) \leq N$. So let us summarize what we have so far. We have 
\begin{itemize}
	\item[(i)] $\sqrt{\eta_1(A)} + \sqrt{\eta_2(A)} \leq 1$ if $N = r_0(A)$ and 
	\item[(ii)] $N \geq r_0(A)$ if $\sqrt{\eta_1(A)} + \sqrt{\eta_2(A)} = 1$.
\end{itemize}
Now let $\sqrt{\eta_1(A)} + \sqrt{\eta_2(A)} = 1$ and assume $r_0(A) < N$. Then
\begin{align*}
\eta_1(A) < \frac{\abs{A_{1,2} + \overline{A_{2,1}}}^2}{4(r_0(A) - \Re \, A_{1,1})(r_0(A) - \Re \, A_{2,2})} =: \tilde{\eta}_1(A),\\
\eta_2(A) < \frac{\abs{A_{2,3} + \overline{A_{3,2}}}^2}{4(r_0(A) - \Re \, A_{2,2})(r_0(A) - \Re \, A_{3,3})} =: \tilde{\eta}_2(A)
\end{align*}
and thus $\sqrt{\tilde{\eta}_1(A)} + \sqrt{\tilde{\eta}_2(A)} > 1$. But this is a contradiction to (i). Thus $\sqrt{\eta_1(A)} + \sqrt{\eta_2(A)} = 1$ implies $r_0(A) = N$.

Conversely, let $N = r_0(A)$ and assume $\sqrt{\eta_1(A)} + \sqrt{\eta_2(A)} < 1$. Then by continuity there exists an $\epsilon > 0$ such that
\[\sqrt{\frac{\abs{A_{1,2} + \overline{A_{2,1}}}^2}{4(N - \epsilon - \Re A_{1,1})(N - \epsilon - \Re A_{2,2})}} + \sqrt{\frac{\abs{A_{2,3} + \overline{A_{3,2}}}^2}{4(N - \epsilon - \Re A_{2,2})(N - \epsilon - \Re A_{3,3})}} = 1.\]
This is a contradiction to (ii) since $N - \epsilon < r_0(A)$. Thus $N = r_0(A)$ implies $\sqrt{\eta_1(A)} + \sqrt{\eta_2(A)} = 1$.
\end{proof}

Although we will not need this in what follows, it is worth noting that, since $A_{3,3} = A_{1,1}$, the equation $\sqrt{\eta_1(A)} + \sqrt{\eta_2(A)} = 1$ can be solved for $r_0(A)$. Clearly, this formula is also valid if $A + A^*$ is diagonal.

\begin{cor} \label{2percor}
Let $\I \in \set{\N,\Z}$ and let $A \in \Lc(\ell^2(\I))$ be tridiagonal and $2$-periodic. Then
\begin{equation} \label{2pereq}
r_0(A) = \frac{1}{2}(a + b + \sqrt{(a-b)^2 + (c+d)^2} \geq \max\set{a,b}
\end{equation}
with equality if and only if $c = d = 0$, where $a = \Re A_{1,1} = \Re A_{3,3}$, $b = \Re A_{2,2}$, $c = \frac{1}{2}\abs{A_{1,2} + \overline{A_{2,1}}}$ and $d = \frac{1}{2}\abs{A_{2,3} + \overline{A_{3,2}}}$.
\end{cor}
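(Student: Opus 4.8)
The plan is to read the corollary off from Proposition~\ref{2per} by an elementary algebraic manipulation, after first disposing of the degenerate case in which $A+A^{*}$ is diagonal, where Proposition~\ref{2per} does not apply. So suppose first that $c=d=0$; this is exactly the case that $A+A^{*}$ is diagonal. Then $B:=\tfrac12(A+A^{*})$ is a diagonal $2$-periodic self-adjoint operator whose main diagonal alternates between $a=\Re A_{1,1}$ and $b=\Re A_{2,2}$, so $\spec(B)=\set{a,b}$ and hence $r_{0}(A)=r_{0}(B)=\max\spec(B)=\max\set{a,b}$. This coincides with the right-hand side of \eqref{2pereq} at $c=d=0$, because $\sqrt{(a-b)^{2}}=\abs{a-b}$ and $\tfrac12(a+b+\abs{a-b})=\max\set{a,b}$, and it already yields the implication $c=d=0\Rightarrow r_{0}(A)=\max\set{a,b}$.

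Now assume $A+A^{*}$ is not diagonal, equivalently $c+d>0$ (recall $c,d\ge 0$). I would substitute the $2$-periodicity relation $A_{3,3}=A_{1,1}$, hence $\Re A_{3,3}=a$, into the definitions of $\eta_{1},\eta_{2}$ from Proposition~\ref{2per}: for $N>\max\set{a,b}$ both quotients then have the same positive denominator $4(N-a)(N-b)$, while their numerators are $\abs{A_{1,2}+\overline{A_{2,1}}}^{2}=(2c)^{2}$ and $\abs{A_{2,3}+\overline{A_{3,2}}}^{2}=(2d)^{2}$, so that $\sqrt{\eta_{1}(A)}=c/\sqrt{(N-a)(N-b)}$ and $\sqrt{\eta_{2}(A)}=d/\sqrt{(N-a)(N-b)}$. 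Hence for $N>\max\set{a,b}$ the relation $\sqrt{\eta_{1}(A)}+\sqrt{\eta_{2}(A)}=1$ is equivalent to
\[(c+d)^{2}=(N-a)(N-b),\]
i.e.\ to $N$ being a root of $N^{2}-(a+b)N+\bigl(ab-(c+d)^{2}\bigr)=0$. Since $N\mapsto(N-a)(N-b)$ is positive and strictly increasing for $N>\max\set{a,b}$, running from $0$ to $\infty$, this equation has exactly one solution $N^{*}>\max\set{a,b}$, namely the larger root $N^{*}=\tfrac{a+b}{2}+\sqrt{\bigl(\tfrac{a-b}{2}\bigr)^{2}+(c+d)^{2}}$ (equivalently $\tfrac12\bigl(a+b+\sqrt{(a-b)^{2}+4(c+d)^{2}}\bigr)$) of the quadratic. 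By Proposition~\ref{2per}, among all $N>\sup_{i}\Re A_{i,i}=\max\set{a,b}$ the relation $\sqrt{\eta_{1}(A)}+\sqrt{\eta_{2}(A)}=1$ holds precisely when $N=r_{0}(A)$; since it holds at $N=N^{*}$, we conclude $r_{0}(A)=N^{*}$, which is the claimed formula \eqref{2pereq}. Finally, $c+d>0$ forces $N^{*}>\tfrac{a+b}{2}+\tfrac12\abs{a-b}=\max\set{a,b}$, which proves the inequality $r_{0}(A)\ge\max\set{a,b}$ together with the implication $c+d>0\Rightarrow r_{0}(A)>\max\set{a,b}$, i.e.\ the converse direction of the equality statement.

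I do not expect a genuine obstacle; once Proposition~\ref{2per} is available the argument is short. The only points that need a little care are (a) selecting the correct (larger) root of the quadratic, which is forced by the monotonicity of $N\mapsto(N-a)(N-b)$ on $[\max\set{a,b},\infty)$ together with the biconditional in Proposition~\ref{2per} applied over that range, and (b) the bookkeeping of the degenerate case $c=d=0$, to which Proposition~\ref{2per} does not apply. For completeness I would also record the alternative route mentioned in the remark before Proposition~\ref{2per}, which bypasses Proposition~\ref{2per} entirely: apply Theorem~\ref{spcper} to the self-adjoint $2$-periodic tridiagonal operator $B=\tfrac12(A+A^{*})$, after reducing the case $\I=\N$ to $\I=\Z$ via Proposition~\ref{selfsim}, to obtain $\spec(B)=\bigcup_{\theta\in[0,2\pi)}\spec\begin{pmatrix} a & \gamma+\overline{\delta}\,e^{-i\theta}\\ \overline{\gamma}+\delta\,e^{i\theta} & b\end{pmatrix}$ with $\gamma=\tfrac12(A_{1,2}+\overline{A_{2,1}})$ and $\delta=\tfrac12(A_{2,3}+\overline{A_{3,2}})$ (so that $\abs{\gamma}=c$ and $\abs{\delta}=d$); the larger eigenvalue of this Hermitian $2\times2$ matrix equals $\tfrac{a+b}{2}+\sqrt{\bigl(\tfrac{a-b}{2}\bigr)^{2}+\abs{\gamma+\overline{\delta}e^{-i\theta}}^{2}}$, and maximizing $\abs{\gamma+\overline{\delta}e^{-i\theta}}=c+d$ over $\theta$ by the triangle inequality recovers the same value. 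I would present the Proposition~\ref{2per}-based proof as the main one, since it stays within the framework just developed.
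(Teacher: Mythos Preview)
Your approach is exactly the one the paper intends: the corollary is not given a separate proof there, only the remark that since $A_{3,3}=A_{1,1}$ the equation $\sqrt{\eta_1(A)}+\sqrt{\eta_2(A)}=1$ from Proposition~\ref{2per} can be solved for $r_0(A)$, together with the earlier comment that the same result follows from Theorem~\ref{spcper} applied to $\tfrac12(A+A^*)$; you carry out both of these in full, including the degenerate case $c=d=0$ and the selection of the larger root.

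One point worth flagging: your derivation yields $r_0(A)=\tfrac12\bigl(a+b+\sqrt{(a-b)^2+4(c+d)^2}\bigr)$, whereas the displayed formula \eqref{2pereq} in the paper has $(c+d)^2$ rather than $4(c+d)^2$ under the square root. Your version is the correct one (e.g.\ for the self-adjoint Laurent shift $a=b=0$, $c=d=1$ one has $r_0(A)=2$, matching your formula but not the paper's), so the discrepancy is a typo in the stated corollary, not an error in your argument.
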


\section{The Feinberg-Zee Random Hopping Matrix}

In this section we consider a generalization of the Feinberg-Zee random hopping matrix that was considered in \cite{ChaDa}:
\[A_{\sigma} := \begin{pmatrix} \ddots & \ddots & & & & \\ \ddots & 0 & 1 & & & \\ & c_{-1} & 0 & 1 & & \\ & & c_0 & 0 & 1 & \\ & & & c_1 & 0 & \smash{\ddots} \\ & & & & \ddots & \ddots \end{pmatrix},\]
where $(c_j)_{j \in \Z}$ is a sequence of i.i.d.~random variables taking values in $\set{\pm \sigma}$ and $\sigma \in (0,1]$. The authors of \cite{ChaDa} showed
\[\spec(A_{\sigma}) \subseteq \set{x + iy : \abs{x} + \abs{y} \leq \sqrt{2(1+\sigma^2)}}.\]
In the case $\sigma = 1$ this square is (almost surely) exactly the numerical range of $A_{\sigma}$ as shown in \cite{ChaChoLi2} by an explicit computation. For $\sigma < 1$ the square is tangential to the ellipses in Theorem \ref{ntridiag} and thus a proper superset of the numerical range of $A_{\sigma}$ (see Proposition \ref{nrFZ} for an explicit formula of $N(A_{\sigma})$). We try to further improve this bound obtained in Theorem \ref{ntridiag} by computing the numerical range of $N(A_{\sigma}^2)$. The idea is the following:
\[\spec(A_{\sigma}) = \set{z \in \C : z \in \spec(A_{\sigma})} \subseteq \set{z \in \C : z^2 \in \spec(A_{\sigma}^2)} \subseteq \set{z \in \C : z^2 \in N(A_{\sigma}^2)} =: \sqrt{N(A_{\sigma}^2)}.\]
We thus obtain another upper bound to the spectrum. As we will see in Section \ref{ExplForm}, we indeed have $\sqrt{N(A_{\sigma}^2)} \subset N(A_{\sigma})$, thus improving the upper bound to the spectrum for all $\sigma \in (0,1]$, in particular improving the upper bound of \cite{ChaChoLi2} that was obtained by a massive numerical computation in the case $\sigma = 1$. To compute $N(A_{\sigma}^2)$ we will observe that, although $A_{\sigma}^2$ is not tridiagonal itself, it can be decomposed into tridiagonal matrices and thus the method introduced in Section \ref{method} can be applied. Explicit formulas for $N(A_{\sigma})$, $N(A_{\sigma})^2$ and $N(A_{\sigma}^2)$ are postponed to Section \ref{ExplForm}. To simplify the notation, we fix $\sigma$ here and drop the index.

\subsection{\texorpdfstring{Computation of $N(A^2)$}{}}

We will prove the following theorem at the end of this section. The sets $N(B_1^2)$, $N(B_2^2)$ and $N(B_2^2)$ are filled ellipses/disks and can be computed explicitly (see Proposition \ref{B1B2B3}). Theorem \ref{N(A^2)} thus provides an explicit formula for the (almost sure) numerical range of $A^2$.

\begin{thm} \label{N(A^2)}
Let $\sigma \in (0,1]$, $U_{-1} = \left\{1\right\}$, $U_0 = \left\{0\right\}$, $U_1 = \left\{\pm \sigma\right\}$ and $A \in M(U_{-1},U_0,U_1)$. Then
\[N(A^2) \subseteq \conv\left(N(B_1^2) \cup N(B_2^2) \cup N(B_3^2)\right),\]
where $B_1 \in M_{per,4}(U_{-1},U_0,U_1)$ is the operator with period $(\sigma,\sigma,\sigma,\sigma)$, $B_2 \in M_{per,4}(U_{-1},U_0,U_1)$ is the operator with period $(-\sigma,-\sigma,\sigma,\sigma)$ and $B_3 \in M_{per,4}(U_{-1},U_0,U_1)$ is the operator with period $(-\sigma,-\sigma,-\sigma,-\sigma)$. If $A \in \PsiE(U_{-1},U_0,U_1)$, then equality holds.
\end{thm}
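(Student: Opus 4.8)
The key structural observation is that if $A \in M(U_{-1},U_0,U_1)$ with $U_{-1} = \{1\}$, $U_0 = \{0\}$, $U_1 = \{\pm\sigma\}$, then $A^2$ has entries only on the diagonals with indices $-2$, $0$, $2$, so it is unitarily equivalent (via the even/odd splitting of $\ell^2(\Z)$ into $\ell^2(2\Z) \oplus \ell^2(2\Z+1)$) to a direct sum of two \emph{tridiagonal} operators, each lying in $M(V_{-1},V_0,V_1)$ for appropriate sets $V_k$ determined by the signs $c_j \in \{\pm\sigma\}$. Concretely, the superdiagonal entry of $A^2$ at position $(j,j+2)$ is $A_{j,j+1}A_{j+1,j+2} = c_{j+1}$ (up to indexing), and the diagonal entry at $(j,j)$ is $A_{j,j-1}A_{j-1,j} + A_{j,j+1}A_{j+1,j} = c_{j-1} + c_j$ (again up to indexing conventions); since $A$ has zeros on its own main and sub-off diagonals in the relevant pattern, the structure of $A^2$ is governed entirely by consecutive pairs and triples of the sign sequence. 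So the first step is to make this decomposition precise and record exactly which $3$- or $4$-periodic patterns of signs generate the ``extreme'' tridiagonal blocks.

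\textbf{Upper bound via Johnson's method.} For the inclusion $N(A^2) \subseteq \conv(N(B_1^2)\cup N(B_2^2)\cup N(B_3^2))$, I follow the template of the proof of Theorem~\ref{ntridiag}: fix $\phi$, form $C := \frac12(e^{i\phi}A^2 + e^{-i\phi}(A^*)^2)$, and bound $r_0(C)$ from above. After the even/odd splitting and the diagonal unitary transformation of Section~\ref{method}, $C$ becomes a direct sum of two real symmetric tridiagonal operators whose diagonal entries are of the form $\Re(e^{i\phi}(c_{j-1}+c_j))$ and whose off-diagonal entries are $\frac12|e^{i\phi}c_j + e^{-i\phi}\overline{c_j}| = |c_j|\,|\cos(\phi + \arg c_j)|$. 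The point is that each entry depends only on \emph{local} data (one or two consecutive signs), so $r_0$ of such an operator is controlled by the worst case over local sign patterns. Using the Szwarc lemma (Lemma~\ref{SzwarcLemma1}) with a \emph{periodic} choice of the sequence $(g_j)$ — the natural period being $4$, matching the period of $B_1, B_2, B_3$ — one shows that for every $\phi$ there is an index $i \in \{1,2,3\}$ with $r_\phi(A^2) \le r_\phi(B_i^2)$. This reduces to a finite case analysis: the period-$4$ sign patterns, up to the symmetries that preserve numerical abscissae (cyclic shift, global sign flip, reflection), collapse to the three representatives $(\sigma,\sigma,\sigma,\sigma)$, $(-\sigma,-\sigma,\sigma,\sigma)$, $(-\sigma,-\sigma,-\sigma,-\sigma)$, and one must check that the remaining patterns like $(-\sigma,\sigma,-\sigma,\sigma)$, $(-\sigma,\sigma,\sigma,\sigma)$ etc. have smaller numerical abscissa in every direction $\phi$. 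This finite check — showing the three listed blocks genuinely dominate all the others for every $\phi$, via the explicit quadratic from Corollary~\ref{2percor} applied after the even/odd split — is where the ``somewhat combinatorial'' work lives and is the main obstacle.

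\textbf{Lower bound for pseudo-ergodic $A$.} For the reverse inclusion when $A \in \PsiE(U_{-1},U_0,U_1)$: by Proposition~\ref{lope}, $\sigma^{\op}(A) = M(U_{-1},U_0,U_1)$, and each $B_i$ lies in this set. A limit operator of $A^2$ is the square of a limit operator of $A$ (Proposition~\ref{basic}), so $B_i^2 \in \sigma^{\op}(A^2)$ for $i=1,2,3$; these $B_i$ are genuinely periodic and hence $B_i^2$ is a periodic (tridiagonal, after splitting) operator to which Proposition~\ref{selfsim} or the limit-operator result Theorem~\ref{mtnr} applies. Since $A^2$ is a band operator with $A^2 \in \sigma^{\op}(A^2)$ (as $A \in \sigma^{\op}(A)$), Theorem~\ref{mtnr} gives $N(A^2) \supseteq \conv(\bigcup_{B \in \sigma^{\op}(A^2)} N(B)) \supseteq \conv(N(B_1^2) \cup N(B_2^2) \cup N(B_3^2))$. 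Combining with the upper bound yields equality.

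\textbf{Remark on the hard step.} The genuinely delicate part is not the operator theory — that is a routine adaptation of Theorem~\ref{ntridiag}'s proof plus the even/odd trick — but verifying that exactly these three period-$4$ patterns suffice: one needs that for \emph{every} angle $\phi$, the maximum of $r_\phi$ over all infinite sign sequences is attained at one of $B_1, B_2, B_3$. I would prove this by noting that, after the diagonal transformation, the relevant tridiagonal operator built from an arbitrary sign sequence has each off-diagonal weight bounded by the corresponding weight in one of the periodic blocks and each diagonal weight likewise, then invoke a monotonicity principle for $r_0$ of tridiagonal operators under entrywise domination (which follows from Lemma~\ref{SzwarcLemma1}: enlarging $N$ only makes~\eqref{BoundCondition} easier, so the block with larger weights has larger $r_0$). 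The subtlety is that the two off-diagonal families and the diagonal family cannot all be simultaneously maximized by a single pattern, which is precisely why three blocks (rather than one) appear, and pinning down that three is both necessary and sufficient is the crux.
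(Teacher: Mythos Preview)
Your overall architecture is right and matches the paper: the even/odd splitting of $A^2$ into two tridiagonal pieces, the reduction to bounding $r_\phi$ via the real symmetric operator $E(\phi)$ of Section~\ref{method}, and the lower bound via Theorem~\ref{mtnr} and $\sigma^{\op}(A^2)=\sigma^{\op}(A)^2$ are exactly what the paper does.

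The gap is in your proposed mechanism for the upper bound. You suggest (i) a \emph{periodic} choice of $(g_j)$ in Lemma~\ref{SzwarcLemma1}, and (ii) an entrywise monotonicity principle to reduce to the three periodic blocks. Neither works here. For (ii): at a fixed $\phi$ the tridiagonal operator $E(\phi)$ built from an arbitrary sign sequence is \emph{not} entrywise dominated by any one of the three periodic comparison operators. For example, in the range $\phi\in[\phi^*,\pi/2]$ where $N(\phi)=r_\phi(B_2^2)=1+\sigma^2$, a generic sign sequence can produce two consecutive diagonal entries equal to $2\sigma\cos\phi$ (as $B_1^2$ does) while simultaneously producing the ``large'' off-diagonal weight $\frac12|e^{i\phi}-\sigma^2 e^{-i\phi}|$ at a nearby site (as $B_2^2$ does); no single $B_i^2$ matches both, so monotonicity gives nothing. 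For (i): because the sign sequence is not periodic, a periodic $(g_j)$ cannot satisfy \eqref{BoundCondition} uniformly --- the quantity $\eta_j$ in the paper's Tables~\ref{table1}--\ref{table2} genuinely exceeds $\tfrac14$ for the local patterns $t_j\in\{3,5\}$, and these patterns can occur at arbitrary, aperiodic locations.

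What the paper actually does is construct an \emph{adaptive}, non-periodic sequence $(g_j)$ (Propositions~\ref{case1} and~\ref{case2}): take $g_j=\tfrac12$ as a baseline, detect the finitely many ``bad'' local patterns $t_j\in\{3,5\}$ where $\eta_j>\tfrac14$, and locally perturb $g_j$ away from $\tfrac12$ just enough to absorb the excess, checking by a finite case analysis that the perturbation can always be returned to $\tfrac12$ within a bounded number of steps (or propagated harmlessly through runs of $t_j\in\{6,11\}$). This local-repair argument is the genuine combinatorial content, and it is not reducible to a periodic test sequence or to entrywise comparison with a single block. Your identification of ``which three patterns suffice'' as the crux is correct, but the resolution requires this adaptive Schur-test construction rather than monotonicity.
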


That in the case $A \in \PsiE(U_{-1},U_0,U_1)$ the right-hand side is a subset of the left-hand side is clear by Theorem \ref{mtnr} and the fact that $\sigma^{\op}(B^2) = \sigma^{\op}(B)^2$ (see Proposition \ref{basic}). Moreover, it is sufficient to prove $N(A^2) \subseteq \conv\left(N(B_1^2) \cup N(B_2^2) \cup N(B_3^2)\right)$ for $A \in \PsiE(U_{-1},U_0,U_1)$ by the same reason. To do so, we need to compute $N(B_i^2)$ for $i \in \set{1,2,3}$ first.

\begin{prop} \label{B1B2B3}
Let $B_1$, $B_2$ and $B_3$ be as above. Then
\begin{align*}
r_\phi(B_1^2) &= 2\sigma\cos(\phi) + \sqrt{(1+\sigma^2)^2\cos(\phi)^2 + (1-\sigma^2)^2\sin(\phi)^2},\\
r_\phi(B_2^2) &= 1 + \sigma^2,\\
r_\phi(B_3^2) &= -2\sigma\cos(\phi) + \sqrt{(1+\sigma^2)^2\cos(\phi)^2 + (1-\sigma^2)^2\sin(\phi)^2}
\end{align*}
and the boundaries of $N(B_1^2)$ and $N(B_2^2)$ are given by the following parametrizations:
\begin{align*}
\partial N(B_1^2) : z(t) &= 2\sigma + (1+\sigma^2)\cos(t) + i(1-\sigma^2)\sin(t),\\
\partial N(B_2^2) : z(t) &= (1+\sigma^2)e^{it},\\
\partial N(B_3^2) : z(t) &= -2\sigma + (1+\sigma^2)\cos(t) + i(1-\sigma^2)\sin(t).
\end{align*}
\end{prop}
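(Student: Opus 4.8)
The plan is to use the fact that each $B_i$ has zero main diagonal, so that $B_i^2$ is supported only on the diagonals of index $-2$, $0$ and $2$. Hence $B_i^2$ leaves invariant the subspaces $H_{\mathrm{ev}} := \clos\,\mathrm{span}\{e_{2m} : m \in \Z\}$ and $H_{\mathrm{odd}} := \clos\,\mathrm{span}\{e_{2m+1} : m \in \Z\}$, and splits as an orthogonal sum $B_i^2 = B_i^{2,\mathrm{ev}} \oplus B_i^{2,\mathrm{odd}}$, which gives $N(B_i^2) = \conv(N(B_i^{2,\mathrm{ev}}) \cup N(B_i^{2,\mathrm{odd}}))$. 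Writing $s_j := (B_i)_{j+1,j}$ for the only non-constant diagonal of $B_i$, a one-line computation yields $B_i^2 e_j = e_{j-2} + (s_{j-1}+s_j)e_j + s_j s_{j+1}e_{j+2}$; after identifying each parity class with $\Z$, both summands are tridiagonal operators whose diagonals are obtained by reading off $s_{j-1}+s_j$ and $s_j s_{j+1}$ along the relevant arithmetic progressions.

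First I would treat $B_1$ and $B_3$. For $B_1$ (all $s_j = \sigma$) both summands equal the Laurent operator $L(1,2\sigma,\sigma^2)$, and for $B_3$ (all $s_j = -\sigma$) both equal $L(1,-2\sigma,\sigma^2)$. Laurent operators are normal, so $N(B_i^2) = \conv(\spec(B_i^2))$, and by the description of the spectrum of a tridiagonal Laurent operator recalled in the proof of Theorem \ref{ntridiag} this is the filled ellipse with center $\pm 2\sigma$ and half-axes $1+\sigma^2$ (in the real direction) and $|1-\sigma^2| = 1-\sigma^2$ (in the imaginary direction). The stated boundary parametrizations of $N(B_1^2)$ and $N(B_3^2)$ follow at once, and the stated $r_\phi$ come from the support function of a rotated ellipse: the center contributes $\Re(\pm 2\sigma\, e^{i\phi}) = \pm 2\sigma\cos\phi$ and the ellipse contributes $\sqrt{(1+\sigma^2)^2\cos^2\phi + (1-\sigma^2)^2\sin^2\phi}$.

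Next I would handle $B_2$, where the two summands differ. On $H_{\mathrm{ev}}$ the values $s_{j-1}+s_j$ and $s_j s_{j+1}$ are constantly $0$ and $\sigma^2$, so $B_2^{2,\mathrm{ev}} = L(1,0,\sigma^2)$, again normal, with numerical range the filled ellipse centered at $0$ with half-axes $1+\sigma^2$ and $1-\sigma^2$. On $H_{\mathrm{odd}}$ one instead gets the alternating values $-2\sigma,\,2\sigma$ on the main diagonal and the constant values $1$ and $-\sigma^2$ on the super- and subdiagonal; that is, $B_2^{2,\mathrm{odd}}$ is a $2$-periodic tridiagonal operator. Its numerical abscissae I would compute by applying Corollary \ref{2percor} (equivalently Proposition \ref{2per}) to $e^{i\phi}B_2^{2,\mathrm{odd}}$ for each $\phi$; the decisive point is the identity $4\sigma^2 + (1-\sigma^2)^2 = (1+\sigma^2)^2$, which collapses the $\phi$-dependence and yields $r_\phi(B_2^{2,\mathrm{odd}}) = 1+\sigma^2$ for every $\phi$. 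Thus $N(B_2^{2,\mathrm{odd}})$ is the closed disk of radius $1+\sigma^2$, and since it contains the ellipse $N(B_2^{2,\mathrm{ev}})$ we conclude that $N(B_2^2)$ is that disk, so $r_\phi(B_2^2) = 1+\sigma^2$ and $\partial N(B_2^2) : z(t) = (1+\sigma^2)e^{it}$.

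The routine parts are the entry-by-entry identification of the two summands and the elementary support-function computations. The one genuinely delicate step is the $B_2$ case: one has to set up the $2$-periodic operator $B_2^{2,\mathrm{odd}}$ with the residue classes correctly aligned, and then observe — via the identity above — that its numerical range is a disk although the operator is neither normal nor has $\spec$ filling the disk. This circular numerical range is exactly what makes $N(B_2^2)$ fill a disk rather than merely the union of two ellipses, and it is a small echo of the paper's main theme.
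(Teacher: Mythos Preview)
Your proposal is correct and follows essentially the same route as the paper: treat $B_1^2$ and $B_3^2$ as (normal) Laurent operators to read off the ellipse boundaries and abscissae, and for $B_2^2$ use the even/odd splitting into a Laurent piece $L(1,0,\sigma^2)$ and a $2$-periodic piece whose numerical abscissa is computed via Proposition \ref{2per}/Corollary \ref{2percor}. The only cosmetic differences are that you apply the even/odd decomposition uniformly to all three $B_i$ (the paper handles $B_1^2,B_3^2$ directly as five-diagonal Laurent operators), and you phrase the $B_2$ calculation via Corollary \ref{2percor} rather than the $\eta_1=\eta_2=\tfrac14$ check the paper does; both give the same one-line cancellation driven by $4\sigma^2+(1-\sigma^2)^2=(1+\sigma^2)^2$.
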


\begin{proof}
$B_1$ is a Laurent operator with diagonals $(1)_{i \in \Z}$, $(0)_{i \in \Z}$ and $(\sigma)_{i \in \Z}$ and therefore $B_1^2$ is a Laurent operator with diagonals $(1)_{i \in \Z}$, $(0)_{i \in \Z}$, $(2\sigma)_{i \in \Z}$, $(0)_{i \in \Z}$ and $(\sigma^2)_{i \in \Z}$. Therefore the spectrum of $B_1^2$ is given by the ellipse $E := \left\{t \in [0,2\pi) : 2\sigma + (1+\sigma^2)\cos(t) + i(1-\sigma^2)\sin(t)\right\}$ (see e.g. \cite{LiRo} or use Theorem \ref{spcper}). Since Laurent operators are normal, $E$ is equal to the boundary of the numerical range of $B_1^2$. An elementary computation yields $r_\phi(B_1^2) = 2\sigma\cos(\phi) + \sqrt{(1+\sigma^2)^2\cos(\phi)^2 + (1-\sigma^2)^2\sin(t)^2}$. 

$B_2^2$ is a $4$-periodic operator that looks like this:
\begin{equation}
B_2^2 = \begin{pmatrix} 
\ddots & \ddots   & \ddots    & \ddots   & \ddots    &        &         &        &        &        \\ 
       & \sigma^2 & 0         & 0        & 0         & 1      &         &        &        &        \\
       &          & -\sigma^2 & 0        & -2\sigma  & 0      & 1       &        &        &        \\
       &          &           & \sigma^2 & 0         & 0      & 0       & 1      &        &        \\
       &          &           &          & -\sigma^2 & 0      & 2\sigma & 0      & 1      &        \\
       &          &           &          &           & \ddots & \ddots  & \ddots & \ddots & \ddots
\end{pmatrix}.
\end{equation}
It can be decomposed into an even and an odd part as follows. Let 
\[\Xb_e := \set{x \in \Xb : x_{2j+1} = 0 \text{ for all } j \in \Z} \quad \text{and} \quad \Xb_o := \set{x \in \Xb : x_{2j} = 0 \text{ for all } j \in \Z}.\]
Then $B_2^2(\Xb_e) \subset \Xb_e$ and $B_2^2(\Xb_o) \subset \Xb_o$. Thus we can consider $C_2 := A^2|_{\Xb_e}$ and $D_2 := A^2|_{\Xb_o}$ and get $A^2 = C \oplus D$ w.r.t.~this decomposition of $\Xb$, where $C_2$ and $D_2$ are tridiagonal operators given by
\begin{equation*}
C_2 = \begin{pmatrix} 
\ddots & \ddots   & \ddots    &        &         &        \\ 
       & \sigma^2 & 0         & 1      &         &        \\
       &          & \sigma^2  & 0      & 1       &        \\
       &          &           & \ddots & \ddots  & \ddots
\end{pmatrix}, \qquad
D_2 = \begin{pmatrix} 
\ddots & \ddots    & \ddots     &         &         &        \\ 
       & -\sigma^2 & -2\sigma   & 1       &         &        \\
       &           & -\sigma^2  & 2\sigma & 1       &        \\
       &           &            & \ddots  & \ddots  & \ddots
\end{pmatrix}.
\end{equation*}
We see that $C_2$ is a Laurent operator and similarly as before we conclude that the boundary of the numerical range of $C_2$ is given by the ellipse $\left\{t \in [0,2\pi) : (1+\sigma^2)\cos(t) + i(1-\sigma^2)\sin(t)\right\}$.
$D_2$ is a $2$-periodic operator, hence we can apply Proposition \ref{2per}. Let $D_{2,\phi} := e^{i\phi}D_2$, $N := 1+\sigma^2$ and let us exclude the cases $(\sigma,\phi) = (1,0)$ and $(\sigma,\phi) = (1,\pi)$ for the moment so that $D_{2,\phi} + D_{2,\phi}^*$ is not diagonal. In the notation of Proposition \ref{2per} $\eta_1(D_{2,\phi})$ and $\eta_2(D_{2,\phi})$ are given by
\begin{align*}
\eta_1(D_{2,\phi}) &= \frac{\left|e^{i\phi} - \sigma^2 e^{-i\phi}\right|^2}{4(1 + \sigma^2 + 2\sigma\cos(\phi))(1 + \sigma^2 - 2\sigma\cos(\phi))} = \frac{(1 + \sigma^2)^2 - 4\sigma^2\cos(\phi)^2}{4((1 + \sigma^2)^2 - 4\sigma^2\cos(\phi)^2)} = \frac{1}{4},\\
\eta_2(D_{2,\phi}) &= \eta_1(D_{2,\phi}) = \frac{1}{4}.
\end{align*}
Thus $\sqrt{\eta_1(D_{2,\phi})} + \sqrt{\eta_2(D_{2,\phi})} = 1$ and by Proposition \ref{2per}, $r_\phi(D_2) = 1 + \sigma^2$ for all $\phi \in [0,2\pi)$ ($(\sigma,\phi) \notin \left\{(1,0),(1,\pi)\right\}$). In the remaining two cases $\frac{1}{2}(D_{2,\phi} + D_{2,\phi}^*)$ is a diagonal matrix and thus it is easily seen that $r_\phi(D_2) = 2$ holds. Therefore we have $r_\phi(D_2) = 1 + \sigma^2$ for all $\phi \in [0,2\pi)$. Now obviously $N(C_2) \subset N(D_2)$ holds and thus we get $r_\phi(B_2^2) = 1 + \sigma^2$ for all $\phi \in [0,2\pi)$. A parametrization of $\partial N(B_2^2)$ is then of course given by $z(t) = (1 + \sigma^2)e^{it}, t \in [0,2\pi)$.

$B_3$ is the same as $B_1$ just with $\sigma$ replaced by $-\sigma$.
\end{proof}

Next we have to compute
\begin{equation} \label{N}
N(\phi) := \max\limits_{j \in \set{1,2,3}} r_\phi(B_j^2)
\end{equation}
for every $\phi \in [0,2\pi)$.

\begin{prop} \label{ValuesOfN}
Let $B_1$, $B_2$ and $B_3$ be as above, $\phi^* := \arccos(\frac{\sigma}{1 + \sigma^2})$ and let $N$ be given by \eqref{N}. Then $N$ takes the following values:
\[N(\phi) = \begin{cases} 2\sigma\cos(\phi) + \sqrt{(1 + \sigma^2)^2\cos(\phi)^2 + (1 - \sigma^2)^2\sin(\phi)^2} & \text{if } 0 \leq \phi \leq \phi^*,\\ 1 + \sigma^2 & \text{if } \phi^* \leq \phi \leq \pi - \phi^*,\\ -2\sigma\cos(\phi) + \sqrt{(1 + \sigma^2)^2\cos(\phi)^2 + (1 - \sigma^2)^2\sin(\phi)^2} & \text{if } \pi - \phi^* \leq \phi \leq \pi + \phi^*,\\ 1 + \sigma^2 & \text{if } \pi + \phi^* \leq \phi \leq 2\pi - \phi^*,\\ 2\sigma\cos(\phi) + \sqrt{(1 + \sigma^2)^2\cos(\phi)^2 + (1 - \sigma^2)^2\sin(\phi)^2} & \text{if } 2\pi - \phi^* \leq \phi \leq 2\pi.\end{cases}\]
\end{prop}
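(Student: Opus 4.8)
The plan is to compare the three functions $r_\phi(B_1^2)$, $r_\phi(B_2^2)$, $r_\phi(B_3^2)$ from Proposition~\ref{B1B2B3} pointwise in $\phi$ and determine which one dominates on each subinterval of $[0,2\pi)$. Since $r_\phi(B_2^2) = 1+\sigma^2$ is constant and $r_\phi(B_1^2)$, $r_\phi(B_3^2)$ differ only by the sign of the $2\sigma\cos(\phi)$ term, the whole problem reduces to understanding a single function, say
\[
f(\phi) := 2\sigma\cos(\phi) + \sqrt{(1+\sigma^2)^2\cos(\phi)^2 + (1-\sigma^2)^2\sin(\phi)^2},
\]
and comparing it to the constant $1+\sigma^2$; then $r_\phi(B_3^2) = f(\phi+\pi)$ handles the reflected case. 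By the symmetries $f(\phi) = f(-\phi)$ (even) and $f(\phi)+f(\phi+\pi) = 2\sqrt{\cdots} \ge 2(1+\sigma^2)$ pointwise (wait --- rather I should just note $f(\phi+\pi) = -2\sigma\cos\phi + \sqrt{\cdots}$), it suffices to work on $[0,\pi]$ and then extend by the evenness of $N$; this accounts for the fourfold case split in the statement.

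First I would compute $f(0) = 2\sigma + (1+\sigma^2) > 1+\sigma^2$ and $f(\pi/2) = 1-\sigma^2 < 1+\sigma^2$ (for $\sigma \in (0,1]$, with equality forced only at the boundary), so there is a crossover. Setting $f(\phi) = 1+\sigma^2$ and isolating the square root gives
\[
\sqrt{(1+\sigma^2)^2\cos(\phi)^2 + (1-\sigma^2)^2\sin(\phi)^2} = 1+\sigma^2 - 2\sigma\cos(\phi);
\]
squaring (and checking the right-hand side is nonnegative in the relevant range, which holds since $2\sigma \le 1+\sigma^2$) and simplifying with $\sin^2 = 1-\cos^2$ should collapse to a linear equation in $\cos\phi$, yielding $\cos\phi = \frac{\sigma}{1+\sigma^2}$, i.e.\ exactly $\phi = \phi^*$. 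I would then verify monotonicity: $f$ is decreasing on $[0,\pi/2]$ (both summands decrease there), so $f(\phi) > 1+\sigma^2$ for $\phi \in [0,\phi^*)$ and $f(\phi) < 1+\sigma^2$ for $\phi \in (\phi^*,\pi/2]$; and on $[\pi/2,\pi]$ one checks $f(\phi) \le f(\pi/2) = 1-\sigma^2 < 1+\sigma^2$ (or more carefully, $f \le 1+\sigma^2$ throughout, since $\cos\phi \le 0$ there makes the $2\sigma\cos\phi$ term nonpositive while the root is at most $1+\sigma^2$). Symmetrically, $r_\phi(B_3^2) = f(\phi+\pi) > 1+\sigma^2$ precisely when $\phi+\pi$ lies in $(-\phi^*,\phi^*) \pmod{2\pi}$, i.e.\ $\phi \in (\pi-\phi^*, \pi+\phi^*)$, and $\le 1+\sigma^2$ otherwise.

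Assembling these comparisons: on $[0,\phi^*]$ we have $r_\phi(B_1^2) \ge 1+\sigma^2 \ge r_\phi(B_3^2)$, so $N(\phi) = r_\phi(B_1^2)$; on $[\phi^*, \pi-\phi^*]$ both $r_\phi(B_1^2)$ and $r_\phi(B_3^2)$ are $\le 1+\sigma^2 = r_\phi(B_2^2)$, so $N(\phi) = 1+\sigma^2$; on $[\pi-\phi^*, \pi+\phi^*]$ the operator $B_3^2$ dominates, giving $N(\phi) = r_\phi(B_3^2)$; then the pattern repeats by the evenness $N(\phi) = N(2\pi-\phi)$ (equivalently $N(-\phi) = N(\phi)$), which follows because $\overline{A^2}$ and $A^2$ have complex-conjugate numerical ranges --- or directly from the explicit formulas, since each $r_\phi(B_j^2)$ depends only on $\cos^2\phi$, $\sin^2\phi$ and $\cos\phi$, all even in $\phi$. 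The main obstacle, such as it is, is purely bookkeeping: one must be careful with the sign conditions when squaring the defining equation for $\phi^*$ (ensuring $1+\sigma^2-2\sigma\cos\phi \ge 0$, which is $(1-\sigma)^2 + 2\sigma(1-\cos\phi) \ge 0$, always true) and with the endpoint/degenerate behaviour when $\sigma = 1$ (where $\phi^* = \arccos(1/2) = \pi/3$ and the ellipses degenerate to segments), but no genuine difficulty arises. I expect the verification to reduce entirely to the elementary inequality $2\sigma \le 1+\sigma^2$ and the monotonicity of $\cos$ on $[0,\pi]$.
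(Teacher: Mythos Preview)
Your proposal is correct and follows essentially the same approach as the paper: solve $r_\phi(B_1^2)=r_\phi(B_2^2)$ by isolating the square root and squaring to obtain $\cos\phi=\frac{\sigma}{1+\sigma^2}$, handle $B_3^2$ by the symmetry $\phi\mapsto\phi+\pi$, and then decide which of the three continuous functions dominates on each subinterval. The only difference is cosmetic: where the paper says ``plug in some angles'' to determine the ordering between intersection points, you supply the explicit monotonicity argument (both summands of $f$ decrease on $[0,\pi/2]$, and on $[\pi/2,\pi]$ the root is bounded by $1+\sigma^2$ while $2\sigma\cos\phi\le 0$), which is a perfectly good way to fill in that step.
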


\begin{proof}
Since all of these functions are continuous, we only have to check where the graphs of $r_\phi(B_1^2)$, $r_\phi(B_2^2)$ and $r_\phi(B_3^2)$ intersect. Let us have a look at $r_\phi(B_1^2)$ and $r_\phi(B_2^2)$ first:
\begin{align*}
r_\phi(B_1^2) = r_\phi(B_2^2) \quad &\Leftrightarrow \quad 2\sigma\cos(\phi) + \sqrt{(1 + \sigma^2)^2\cos(\phi)^2 + (1 - \sigma^2)^2\sin(\phi)^2} = 1 + \sigma^2\\
&\Leftrightarrow \quad (1 + \sigma^2)^2\cos(\phi)^2 + (1 - \sigma^2)^2(1 - \cos(\phi)^2) = (1 + \sigma^2 - 2\sigma\cos(\phi))^2\\
&\Leftrightarrow \quad \cos(\phi) = \frac{\sigma}{1 + \sigma^2}.
\end{align*}
Thus the graphs of $r_\phi(B_1^2)$ and $r_\phi(B_2^2)$ only intersect at $\phi^* = \arccos(\frac{\sigma}{1 + \sigma^2})$ and $2\pi - \phi^*$. Similarly, the graphs of $r_\phi(B_2^2)$ and $r_\phi(B_3^2)$ only intersect at $\pi - \phi^* = \arccos(\frac{-\sigma}{1 + \sigma^2})$ and $\pi + \phi^*$. Finally, $r_\phi(B_1^2)$ and $r_\phi(B_3^2)$ obviously only intersect at $\frac{\pi}{2}$ and $\frac{3\pi}{2}$. Plugging in some angles and using \eqref{N}, one easily deduces the assertion.
\end{proof}

Now let us focus on $A^2$. Let us denote the first subdiagonal of $A \in \PsiE(U_{-1},U_0,U_1)$ by $(h_j)_{j \in \Z}$, i.e.~$h_j := A_{j+1,j}$ for all $j \in \Z$. Then $A^2$ has the following entries:
\begin{align*}
(A^2)_{j,j+2} &= A_{j,j+1}A_{j+1,j+2} = 1,\\
(A^2)_{j,j+1} &= A_{j,j+1}A_{j+1,j+1} + A_{j,j}A_{j,j+1} = 0,\\
(A^2)_{j,j} &= A_{j,j+1}A_{j+1,j} + A_{j,j}A_{j,j} + A_{j,j-1}A_{j-1,j} = h_j + h_{j-1},\\
(A^2)_{j,j-1} &= A_{j,j}A_{j,j-1} + A_{j,j-1}A_{j-1,j-1} = 0,\\
(A^2)_{j,j-2} &= A_{j,j-1}A_{j-1,j-2} = h_{j-1}h_{j-2}
\end{align*}
and can be decomposed as $A^2 = C \oplus D$ as in the proof of Proposition \ref{B1B2B3}. The matrices $C$ and $D$ are given by
\begin{align*}
C_{j,j+1} &= 1,\\
C_{j,j} &= h_{2j} + h_{2j-1},\\
C_{j,j-1} &= h_{2j-1}h_{2j-2}
\end{align*}
and
\begin{align*}
D_{j,j+1} &= 1,\\
D_{j,j} &= h_{2j+1} + h_{2j},\\
D_{j,j-1} &= h_{2j}h_{2j-1}
\end{align*}
for $j \in \Z$, respectively. We will focus on the computation of the numerical range of $C$. The computation of the numerical range of $D$ is exactly the same so that we obtain $N(C) = N(D)$. Since the numerical range of a direct sum is just the convex hull of the union of the numerical ranges of its components, we get $N(A^2) = N(C) = N(D)$.

By Proposition \ref{lope}, we have $A \in \sigma^{\op}(A)$ and thus there exists a sequence of integers $(g_n)_{n \in \N}$ tending to infinity such that $A_g$ exists and is equal to $A$. W.l.o.g.~we may assume that this sequence tends to $+\infty$. Then $(A^2)_g = (A_g)^2 = A^2 = C \oplus D$ by Proposition \ref{basic}. Observe that $V_{-g_n}(C \oplus D)V_{g_n} = V_{-g_n/2}CV_{g_n/2} \oplus V_{-g_n/2}DV_{g_n/2}$ if $g_n$ is even and $V_{-g_n}(C \oplus D)V_{g_n} = V_{-(g_n-1)/2}DV_{(g_n-1)/2} \oplus V_{-(g_n+1)/2}CV_{(g_n+1)/2}$ if $g_n$ is odd. Clearly either $\set{n \in \N: g_n \text{ is even}}$ or $\set{n \in \N : g_n \text{ is odd}}$ is an infinite set. Let us first assume that $\set{n \in \N: g_n \text{ is even}}$ is infinite and denote the sequence of even elements in $g$ by $g^e$. Then by construction $V_{-g^e_n/2}CV_{g^e_n/2}$ converges strongly to $C$ and $V_{-g^e_n/2}DV_{g^e_n/2}$ converges strongly to $D$ as $n \to \infty$. Thus $C \in \sigma^{\op}(C)$ and $D \in \sigma^{\op}(D)$. Similarly, assume that $\set{n \in \N: g_n \text{ is odd}}$ is infinite and denote the sequence of odd elements in $g$ by $g^o$. Then by construction, $V_{-(g^o_n-1)/2}CV_{(g^o_n-1)/2}$ converges strongly to $D$ and $V_{-(g^o_n+1)/2}DV_{(g^o_n+1)/2}$ converges strongly to $C$ as $n \to \infty$. Thus $D \in \sigma^{\op}(C)$ and $C \in \sigma^{\op}(D)$ in this case. Since limit operators of limit operators are again limit operators of the original operator (see e.g.~\cite[Corollary 3.97]{Marko}), we also get $C \in \sigma^{\op}(C)$ and $D \in \sigma^{\op}(D)$ in this case. Since $g^e$ and $g^o$ tend to $+\infty$, we can apply Proposition \ref{selfsim} to get $N(A^2) = N(C) = N(C_+)$, where $C_+ := P_{\N}CP_{\N}|_{\im P_{\N}} \in \Lc(\ell^2(\N))$.

Fix $\phi \in [0,2\pi)$ and let $E(\phi)$ be the real symmetric tridiagonal operator that satisfies
\begin{align*}
E_{j,j}(\phi) &= \Re(e^{i\phi}(C_+)_{j,j}),\\
E_{j,j+1}(\phi) &= \frac{1}{2}\abs{e^{i\phi}(C_+)_{j,j+1} + e^{-i\phi}\overline{(C_+)_{j+1,j}}}
\end{align*}
and $r_\phi(A^2) = r_{\phi}(C_+) = r_0(E(\phi))$ (cf. \eqref{C}). Now for every angle $\phi$ there are 16 different combinations for $(h_{2j-1}, h_{2j}, h_{2j+1}, h_{2j+2})$ in \eqref{BoundCondition}. Define
\begin{equation} \label{FormulaEta}
\eta_j(\phi) := \frac{E_{j,j+1}(\phi)^2}{(N(\phi) - E_{j,j}(\phi))(N(\phi) - E_{j+1,j+1}(\phi))}
\end{equation}
for all $j \in \N$, where $N(\phi)$ is given by Proposition \ref{ValuesOfN}. Let us consider $\phi \in [\phi^*,\frac{\pi}{2}]$ first. For these angles, we have the following table. For later reference we numbered the $16$ cases lexicographically.

\renewcommand{\arraystretch}{1.5}

\begin{center}
\large{
\begin{tabular}{|c|c|c|} \hline
$t_j$ & $(h_{2j-1}, h_{2j}, h_{2j+1}, h_{2j+2})$ & $\eta_j(\phi)$\\ \hline
$1$ & $(\sigma,\sigma,\sigma,\sigma)$ & $\frac{(1 - \sigma^2)^2 + 4\sigma^2\cos(\phi)^2}{4(1 + \sigma^2 - 2\sigma\cos(\phi))^2}$\\ \hline
$2$ & $(\sigma,\sigma,\sigma,-\sigma)$ & $\frac{(1 - \sigma^2)^2 + 4\sigma^2\cos(\phi)^2}{4(1 + \sigma^2 - 2\sigma\cos(\phi))(1 + \sigma^2)}$\\ \hline
$3$ & $(\sigma,\sigma,-\sigma,\sigma)$ & $\frac{(1 + \sigma^2)^2 - 4\sigma^2\cos(\phi)^2}{4(1 + \sigma^2 - 2\sigma\cos(\phi))(1 + \sigma^2)}$\\ \hline
$4$ & $(\sigma,\sigma,-\sigma,-\sigma)$ & $\frac{(1 + \sigma^2)^2 - 4\sigma^2\cos(\phi)^2}{4(1 + \sigma^2 - 2\sigma\cos(\phi))(1 + \sigma^2 + 2\sigma\cos(\phi))}$\\ \hline
$5$ & $(\sigma,-\sigma,\sigma,\sigma)$ & $\frac{(1 + \sigma^2)^2 - 4\sigma^2\cos(\phi)^2}{4(1 + \sigma^2 - 2\sigma\cos(\phi))(1 + \sigma^2)}$\\ \hline
$6$ & $(\sigma,-\sigma,\sigma,-\sigma)$ & $\frac{(1 + \sigma^2)^2 - 4\sigma^2\cos(\phi)^2}{4(1 + \sigma^2)^2}$\\ \hline
$7$ & $(\sigma,-\sigma,-\sigma,\sigma)$ & $\frac{(1 - \sigma^2)^2 + 4\sigma^2\cos(\phi)^2}{4(1 + \sigma^2)^2}$\\ \hline
$8$ & $(\sigma,-\sigma,-\sigma,-\sigma)$ & $\frac{(1 - \sigma^2)^2 + 4\sigma^2\cos(\phi)^2}{4(1 + \sigma^2 + 2\sigma\cos(\phi))(1 + \sigma^2)}$\\ \hline
$9$ & $(-\sigma,\sigma,\sigma,\sigma)$ & $\frac{(1 - \sigma^2)^2 + 4\sigma^2\cos(\phi)^2}{4(1 + \sigma^2 - 2\sigma\cos(\phi))(1 + \sigma^2)}$\\ \hline
$10$ & $(-\sigma,\sigma,\sigma,-\sigma)$ & $\frac{(1 - \sigma^2)^2 + 4\sigma^2\cos(\phi)^2}{4(1 + \sigma^2)^2}$\\ \hline
$11$ & $(-\sigma,\sigma,-\sigma,\sigma)$ & $\frac{(1 + \sigma^2)^2 - 4\sigma^2\cos(\phi)^2}{4(1 + \sigma^2)^2}$\\ \hline
$12$ & $(-\sigma,\sigma,-\sigma,-\sigma)$ & $\frac{(1 + \sigma^2)^2 - 4\sigma^2\cos(\phi)^2}{4(1 + \sigma^2 + 2\sigma\cos(\phi))(1 + \sigma^2)}$\\ \hline
$13$ & $(-\sigma,-\sigma,\sigma,\sigma)$ & $\frac{(1 + \sigma^2)^2 - 4\sigma^2\cos(\phi)^2}{4(1 + \sigma^2 - 2\sigma\cos(\phi))(1 + \sigma^2 + 2\sigma\cos(\phi))}$\\ \hline
$14$ & $(-\sigma,-\sigma,\sigma,-\sigma)$ & $\frac{(1 + \sigma^2)^2 - 4\sigma^2\cos(\phi)^2}{4(1 + \sigma^2 + 2\sigma\cos(\phi))(1 + \sigma^2)}$\\ \hline
$15$ & $(-\sigma,-\sigma,-\sigma,\sigma)$ & $\frac{(1 - \sigma^2)^2 + 4\sigma^2\cos(\phi)^2}{4(1 + \sigma^2 + 2\sigma\cos(\phi))(1 + \sigma^2)}$\\ \hline
$16$ & $(-\sigma,-\sigma,-\sigma,-\sigma)$ & $\frac{(1 - \sigma^2)^2 + 4\sigma^2\cos(\phi)^2}{4(1 + \sigma^2 + 2\sigma\cos(\phi))^2}$\\ \hline
\end{tabular}
}
\captionof{table}{} \label{table1}
\end{center}

This table has to be read as follows. The sequence $(h_j)_{j \in \N}$ induces a sequence $(t_j)_{j \in \N}$. For example if the sequence $(h_j)_{j \in \N}$ starts with $(\sigma,-\sigma,-\sigma,\sigma,\sigma,\sigma,\sigma,-\sigma,\sigma,-\sigma,\ldots)$, the sequence $(t_j)_{j \in \N}$ starts with $(7,9,2,6,\ldots)$. The numbers $t_j$ are used to refer to the respective $\eta_j$, which are computed via Formula \eqref{FormulaEta}. So if, for example, $t_j = 6$, then $\eta_j(\phi) = \frac{(1 + \sigma^2)^2 - 4\sigma^2\cos(\phi)^2}{4(1 + \sigma^2)^2}$.

We will find the following equalities and inequalities useful:
\begin{align}
0 &\leq \cos(\phi) \leq \frac{\sigma}{1 + \sigma^2} < 1\label{cosineq1}\\
(1 - \sigma^2)^2 + 4\sigma^2\cos(\phi)^2 &\leq (1 - \sigma^2)^2 + \frac{4\sigma^4}{(1 + \sigma^2)^2} = \frac{(1 + \sigma^4)^2}{(1 + \sigma^2)^2}\label{cosineq2}\\
1 + \sigma^2 - 2\sigma\cos(\phi) &\geq 1 + \sigma^2 - \frac{2\sigma^2}{1 + \sigma^2} = \frac{1 + \sigma^4}{1 + \sigma^2}\label{cosineq3}\\
(1 + \sigma^2)^2 - 4\sigma^2\cos(\phi)^2 &= (1 + \sigma + 2\sigma\cos(\phi))(1 + \sigma - 2\sigma\cos(\phi))\label{cosineq4}
\end{align}

Using these, it is not difficult to see that $\eta_j(\phi) \leq \frac{1}{2}$ for all $\phi \in [\phi^*,\frac{\pi}{2}]$ and $j \in \N$ (i.e.~for all possible values of $\eta_j(\phi)$ in Table \ref{table1}). We even have $\eta_j(\phi) \leq \frac{1}{4}$ for all $\phi \in [\phi^*,\frac{\pi}{2}]$ and $j \in \N$ with $t_j \notin \set{3,5}$. This observation is very useful to finally construct the sequence needed for Lemma \ref{SzwarcLemma1}.

\begin{prop} \label{case1}
Let $\sigma \in (0,1]$, $U_{-1} = \left\{1\right\}$, $U_0 = \left\{0\right\}$, $U_1 = \left\{\pm \sigma\right\}$ and let $A \in \Psi E(U_{-1},U_0,U_1)$. Let $\phi \in [\phi^*,\frac{\pi}{2}]$, $\eta_j := \eta_j(\phi)$ and $t_j$ for all $j \in \N$ be defined as above. Then the sequence $(g_j)_{j \in \N}$, defined  by the following prescription, satisfies $g_j \in [0,1]$ and $\eta_j \leq g_{j+1}(1 - g_j)$ for all $j \in \N$:
\begin{itemize}
	\item If $t_1 = 5$, choose $g_1 = \frac{1}{2}\frac{1 + \sigma^2 - 2\sigma\cos(\phi)}{1 + \sigma^2}$.
	\item If there is some $k \in \N$ such that $t_1 = \ldots = t_k = 6$ and $t_{k+1} = 5$, choose $g_1 = \frac{1}{2}\frac{1 + \sigma^2 - 2\sigma\cos(\phi)}{1 + \sigma^2}$.
	\item If neither is true, choose $g_1 = \frac{1}{2}$.
	\item If $t_j \in \set{2,6,10,14}$ and $t_{j+1} = 5$, choose $g_{j+1} = \frac{1}{2}\frac{1 + \sigma^2 - 2\sigma\cos(\phi)}{1 + \sigma^2}$.
	\item If $t_j \in \set{2,6,10,14}$, there is some $k > j$ such that $t_{j+1} = \ldots = t_k = 6$ and $t_{k+1} = 5$, choose $g_{j+1} = \frac{1}{2}\frac{1 + \sigma^2 - 2\sigma\cos(\phi)}{1 + \sigma^2}$.
	\item If $t_j = 3$, choose $g_{j+1} = \frac{1}{2}\frac{1 + \sigma^2 + 2\sigma\cos(\phi)}{1 + \sigma^2}$.
	\item If $t_j = 11$, thereis some $k \leq j$ such that $t_k = \ldots = t_j = 11$ and $t_{k-1} = 3$, choose $g_{j+1} = \frac{1}{2}\frac{1 + \sigma^2 + 2\sigma\cos(\phi)}{1 + \sigma^2}$.
	\item If none of the above is true, choose $g_{j+1} = \frac{1}{2}$.
\end{itemize}
\end{prop}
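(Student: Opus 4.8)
The plan is to verify directly that the prescribed sequence $(g_j)_{j\in\N}$ satisfies $g_j\in[0,1]$ and $\eta_j\le g_{j+1}(1-g_j)$ for every $j\in\N$; the second of these is exactly Condition \eqref{BoundCondition} for $C=E(\phi)$ with $N=N(\phi)$, once one inserts \eqref{FormulaEta} and the value of $N(\phi)$ from Proposition \ref{ValuesOfN}. Apart from $\tfrac12$, the only values the prescription assigns to $g_j$ are $g^{\pm}:=\tfrac12\bigl(1\pm\tfrac{2\sigma\cos\phi}{1+\sigma^2}\bigr)$, and since $2\sigma\le1+\sigma^2$ and $0\le\cos\phi\le\tfrac{\sigma}{1+\sigma^2}<1$ on $[\phi^*,\tfrac{\pi}{2}]$ by \eqref{cosineq1}, we get $0\le g^-\le\tfrac12\le g^+\le1$; this settles the first requirement. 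Two observations will be used throughout: $g^-+g^+=1$, and $g^-g^+=\tfrac14\,\frac{(1+\sigma^2)^2-4\sigma^2\cos^2\phi}{(1+\sigma^2)^2}$, whose numerator factors by \eqref{cosineq4}.

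For the inequality I would argue by cases on the symbol $t_j$. What keeps this finite is that consecutive symbols overlap in three of their four sign-entries, so $t_j$ heavily restricts its neighbours: e.g.\ $t_j\in\{2,6,10,14\}$ forces $t_{j-1}\in\{2,6,10,14\}$ and $t_{j+1}\in\{5,6,7,8\}$, the symbol $t_j=3$ forces $t_{j-1}\in\{1,5,9,13\}$ and $t_{j+1}\in\{9,10,11,12\}$, and $t_j\in\{5,6\}$ forces $t_{j-1}\in\{2,6,10,14\}$. Feeding these restrictions into the prescription, one reads off for each $t_j$ exactly which pairs $(g_j,g_{j+1})$ can occur. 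In particular, $(g_j,g_{j+1})=(\tfrac12,\tfrac12)$ forces $t_j\notin\{3,5\}$ (if $t_j=3$ then $g_{j+1}=g^+$, and if $t_j=5$ then $g_j=g^-$), and for such $t_j$ the required inequality is just $\eta_j\le\tfrac14$, which has already been observed to hold, using the entries of Table \ref{table1} together with \eqref{cosineq2}, \eqref{cosineq3} and $2\sigma\le1+\sigma^2$.

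It remains to treat the finitely many local patterns in which $(g_j,g_{j+1})$ deviates from $(\tfrac12,\tfrac12)$; by construction these occur only near the symbols $3$ and $5$, isolated or joined into the two kinds of chains built into the prescription --- a run of $6$'s linking a symbol of $\{2,6,10,14\}$ to a later $5$, and a run of $11$'s following a $3$. From Table \ref{table1} and \eqref{cosineq4} one computes $\eta_3=\eta_5=\tfrac12 g^+$, $\eta_6=\eta_{11}=g^-g^+$, $\eta_{12}=\eta_{14}=\tfrac12 g^-$ and $\eta_4=\eta_{13}=\tfrac14$; these are exactly the values that turn $\eta_j\le g_{j+1}(1-g_j)$ into an equality against the deviating pairs produced by the prescription --- for instance $t_j=5$ gives $g_j=g^-$, $g_{j+1}=\tfrac12$, so $g_{j+1}(1-g_j)=\tfrac12 g^+=\eta_5$, while an interior symbol $6$ of a $6$-chain gives $g_j=g_{j+1}=g^-$, so $g_{j+1}(1-g_j)=g^-g^+=\eta_6$. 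The only deviating cases not of this kind --- a symbol $2$, $9$ or $10$ lying next to a $g^-$ or a $g^+$ --- reduce, after clearing denominators, to $\cos\phi\le\tfrac{\sigma}{1+\sigma^2}$, i.e.\ to \eqref{cosineq1}, which holds since $\phi\ge\phi^*$.

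The main obstacle is precisely this last bookkeeping. One has to walk along an arbitrary admissible symbol sequence and check that the prescription keeps $g$ constantly equal to $g^-$ through an entire $6$-chain and constantly equal to $g^+$ through an entire $11$-chain (so the constraint becomes the equality $\eta_j=g^-g^+$ at interior symbols), that at the two ends of such a chain the value of $g$ on the outside --- $\tfrac12$, or at the left end of an $11$-chain possibly $g^+$ --- is still compatible with the required inequality, and that the prescription is unambiguous (the hypotheses of its bullet points being pairwise incompatible, as $\{2,6,10,14\}$, $\{3\}$ and $\{11\}$ are disjoint) and behaves correctly at the left end $j=1$. No individual step is hard --- each is a one-line manipulation of the rational expressions in Table \ref{table1} --- but the cases are numerous, and keeping the correspondence between $(g_j,g_{j+1})$ and $t_j$ straight is where the care lies.
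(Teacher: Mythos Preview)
Your approach is essentially the same as the paper's: verify $g_j\in[0,1]$ from \eqref{cosineq1}, note that $\eta_j\le\tfrac14$ whenever $t_j\notin\{3,5\}$, and then do a local case analysis around the symbols $3$ and $5$ (and the adjoining $6$-chains and $11$-chains) to check the remaining inequalities. Your shorthand $g^{\pm}$ together with the identities $g^-+g^+=1$ and $g^-g^+=\eta_6=\eta_{11}$ packages the computations more cleanly than the paper's explicit line-by-line verifications, and your observation that the ``boundary'' cases with symbols $2$, $9$, $10$ reduce precisely to $\cos\phi\le\tfrac{\sigma}{1+\sigma^2}$ is exactly what the paper establishes via \eqref{cosineq2} and \eqref{cosineq3}.

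One slip to correct: the claim that $t_j\in\{2,6,10,14\}$ forces $t_{j-1}\in\{2,6,10,14\}$ is false (e.g.\ $t_j=2$ starts with $(\sigma,\sigma)$, so $t_{j-1}\in\{1,5,9,13\}$; only $t_j=6$ has this predecessor constraint). Fortunately this claim is not actually used --- what you need, and state separately, is that $t_j\in\{5,6\}$ forces $t_{j-1}\in\{2,6,10,14\}$, which is correct and is what makes the backward analysis of the $5$-case (and of $6$-chains) work. With that correction, your sketch matches the paper's proof; the paper simply writes out each of the subcases (the four successors of $3$, the five predecessors of $5$ including $j=1$) explicitly, whereas you summarise them.
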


\begin{proof}
That $g_j \in [0,1]$ holds for all $j \in \N$ follows from \eqref{cosineq1}. So it remains to prove that $\eta_j \leq g_{j+1}(1 - g_j)$ holds. Above we observed that $\eta_j \leq \frac{1}{4}$ unless $t_j \in \set{3,5}$. So if $t_j \notin \set{3,5}$ for all $j \in \N$, then $\eta_j \leq g_{j+1}(1-g_j)$ is obviously satisfied. It remains to investigate what happens if $t_j \in \set{3,5}$ for some $j \in \N$. Roughly speaking, the idea is that the cases $t_j = 3$ and $t_j = 5$ affect the sequence $(g_k)_{k \in \N}$ only locally in the sense that $\set{k \in \N : g_k = \frac{1}{2}}$ is an infinite set. Thus if $t_j \in \set{3,5}$ occurs, we try to get back to $\frac{1}{2}$ as soon as possible as $j$ increases. The argument can then be repeated by induction.

Note that if $t_j \in \set{3,5}$, we can simplify $\eta_j$ as follows:
\[\eta_j = \frac{(1 + \sigma^2)^2 - 4\sigma^2\cos(\phi)^2}{4(1 + \sigma^2 - 2\sigma\cos(\phi))(1 + \sigma^2)} = \frac{1}{4}\frac{1 + \sigma^2 + 2\sigma\cos(\phi)}{1 + \sigma^2},\]
where we used \eqref{cosineq4}.

Let us consider the case $t_j = 3$ first and assume $g_j = \frac{1}{2}$. More precisely, we start our sequence with $g_1 = g_2 = \ldots = \frac{1}{2}$ until $t_j \in \set{3,5}$ occurs the first time and consider the case where $t_j = 3$ occurs first. Then by definition $g_{j+1} = \frac{1}{2}\frac{1 + \sigma^2 + 2\sigma\cos(\phi)}{1 + \sigma^2}$ and
\[g_{j+1}(1 - g_j) = \frac{1}{4}\frac{1 + \sigma^2 + 2\sigma\cos(\phi)}{1 + \sigma^2} = \eta_j.\]

Observe that $\eta_j$ and $\eta_{j+1}$ are not independent. Indeed, $\eta_{j+1}$ depends on $h_{2j+1}$, $h_{2j+2}$, $h_{2j+3}$ and $h_{2j+4}$ whereas $\eta_j$ depends on $h_{2j-1}$, $h_{2j}$, $h_{2j+1}$ and $h_{2j+2}$. Thus if we fix $\eta_j$, there are only 4 possible combinations for $\eta_{j+1}$. In particular, if $t_j = 3$, then $t_{j+1}$ has to be contained in $\set{9,10,11,12}$. So there are four cases:
\begin{align*}
\eta_{j+1} &= \frac{(1 - \sigma^2)^2 + 4\sigma^2\cos(\phi)^2}{4(1 + \sigma^2 - 2\sigma\cos(\phi))(1 + \sigma^2)} \quad (t_{j+1} = 9),\\
\eta_{j+1} &= \frac{(1 - \sigma^2)^2 + 4\sigma^2\cos(\phi)^2}{4(1 + \sigma^2)^2} \quad (t_{j+1} = 10),\\
\eta_{j+1} &= \frac{(1 + \sigma^2)^2 - 4\sigma^2\cos(\phi)^2}{4(1 + \sigma^2)^2} \quad (t_{j+1} = 11),\\
\eta_{j+1} &= \frac{(1 + \sigma^2)^2 - 4\sigma^2\cos(\phi)^2}{4(1 + \sigma^2 + 2\sigma\cos(\phi))(1 + \sigma^2)} \quad (t_{j+1} = 12).
\end{align*}
In the first case we have $g_{j+2} = \frac{1}{2}$:
\begin{align*}
g_{j+2}(1 - g_{j+1}) &= \frac{1}{2} - \frac{1}{4}\frac{1 + \sigma^2 + 2\sigma\cos(\phi)}{1 + \sigma^2}\\
&= \frac{1}{4}\frac{1 + \sigma^2 - 2\sigma\cos(\phi)}{1 + \sigma^2} \\
&\geq \frac{1}{4}\frac{1 + \sigma^4}{(1 + \sigma^2)^2}\\
&\geq \eta_{j+1},
\end{align*}
where we used \eqref{cosineq3} in line 2 and \eqref{cosineq2} and \eqref{cosineq3} in line 3. In the second case we have $g_{j+2} = \frac{1}{2}\frac{1 + \sigma^2 - 2\sigma\cos(\phi)}{1 + \sigma^2} \leq \frac{1}{2}$ if $t_{j+2} \in \set{5,6}$ and $g_{j+2} = \frac{1}{2}$ if not:
\begin{align*}
g_{j+2}(1 - g_{j+1}) &\geq \frac{1 + \sigma^2 - 2\sigma\cos(\phi)}{1 + \sigma^2}\left(\frac{1}{2} - \frac{1}{4}\frac{1 + \sigma^2 + 2\sigma\cos(\phi)}{1 + \sigma^2}\right)\\
&= \frac{1}{4}\frac{(1 + \sigma^2 - 2\sigma\cos(\phi))^2}{(1 + \sigma^2)^2}\\
&\geq \frac{1}{4}\frac{(1 + \sigma^4)^2}{(1 + \sigma^2)^4}\\
&\geq \eta_{j+1},
\end{align*}
where we used \eqref{cosineq3} in line 2 and \eqref{cosineq2} in line 3. In the third case we have $g_{j+2} = \frac{1}{2}\frac{1 + \sigma^2 + 2\sigma\cos(\phi)}{1 + \sigma^2}$:
\begin{align*}
g_{j+2}(1 - g_{j+1}) &= \frac{1 + \sigma^2 + 2\sigma\cos(\phi)}{1 + \sigma^2}\left(\frac{1}{2} - \frac{1}{4}\frac{1 + \sigma^2 + 2\sigma\cos(\phi)}{1 + \sigma^2}\right)\\
&= \frac{1}{4}\frac{1 + \sigma^2 + 2\sigma\cos(\phi)}{1 + \sigma^2}\frac{1 + \sigma^2 - 2\sigma\cos(\phi)}{1 + \sigma^2}\\
&= \frac{1}{4}\frac{(1 + \sigma^2)^2 - 4\sigma^2\cos(\phi)^2}{(1 + \sigma^2)^2}\\
&= \eta_{j+1}.
\end{align*}
In the fourth case we have $g_{j+2} = \frac{1}{2}$:
\begin{align*}
g_{j+2}(1 - g_{j+1}) &= \frac{1}{2} - \frac{1}{4}\frac{1 + \sigma^2 + 2\sigma\cos(\phi)}{1 + \sigma^2}\\
&= \frac{1}{4}\frac{1 + \sigma^2 - 2\sigma\cos(\phi)}{1 + \sigma^2}\\
&= \frac{1}{4}\frac{(1 + \sigma^2)^2 - 4\sigma^2\cos(\phi)^2}{(1 + \sigma^2 + 2\sigma\cos(\phi))(1 + \sigma^2)}\\
&= \eta_{j+1}.
\end{align*}
So either $g_{j+2} \leq \frac{1}{2}$ (and we included one special case that we need afterwards) or $g_{j+2} = g_{j+1}$. Thus either we are where we started with, namely $\frac{1}{2}$, or we are in the third case, where $\eta_{j+1}$ is of type (11). But in this case we have $h_{2j+1} = h_{2j+3}$ and $h_{2j+2} = h_{2j+4}$ and thus we have again the same four cases for $\eta_{j+2}$ and so on. So either we end up with an infinite sequence with $g_k = g_{j+1}$ for all $k > j$ (which is impossible by pseudo-ergodicity, but would still be just fine) or we eventually go out with $g_k \leq \frac{1}{2}$ for some $k \geq j+2$. Thus we are done by induction if we can control the case $t_j = 5$ as well.

The case $t_j = 5$ is very similar to the case $t_j = 3$, but we have to think backwards this time, which is a little bit more complicated. If we have a look at the generators (i.e.~$h_{2j-1}$, $h_{2j}$, $h_{2j+1}$ and $h_{2j+2}$) of the cases $t_j = 3$ and $t_j = 5$, it is intuitively clear, why this has to be the same but backwards. So assume $t_j = 5$. Then $g_j = \frac{1}{2}\frac{1 + \sigma^2 - 2\sigma\cos(\phi)}{1 + \sigma^2}$ and $g_{j+1} = \frac{1}{2}$ by definition and thus
\[g_{j+1}(1 - g_j) = \frac{1}{2} - \frac{1}{4}\frac{1 + \sigma^2 - 2\sigma\cos(\phi)}{1 + \sigma^2} = \frac{1}{4}\frac{1 + \sigma^2 + 2\sigma\cos(\phi)}{1 + \sigma^2} = \eta_j.\]
As already mentioned, we have to look backwards here, i.e. we want to control $g_{j-1}$. Now there are five cases. The first case is $j = 1$, which is trivial of course. The second case is where $t_{j-1} = 2$. In this case we have $g_{j-1} = \frac{1}{2}$:
\begin{align*}
g_j(1 - g_{j-1}) &= \frac{1}{4}\frac{1 + \sigma^2 - 2\sigma\cos(\phi)}{1 + \sigma^2}\\
&\geq \frac{1}{4} \frac{1 + \sigma^4}{(1 + \sigma^2)^2}\\
&\geq \eta_{j-1},
\end{align*}
where we used \eqref{cosineq3} in line 1 and \eqref{cosineq2} and \eqref{cosineq3} in line 2. The third case is where $t_{j-1} = 6$. In this case we have $g_{j-1} = \frac{1}{2}\frac{1 + \sigma^2 - 2\sigma\cos(\phi)}{1 + \sigma^2}$:
\begin{align*}
g_j(1 - g_{j-1}) &= \frac{1}{2}\frac{1 + \sigma^2 - 2\sigma\cos(\phi)}{1 + \sigma^2}\left(1 - \frac{1}{2}\frac{1 + \sigma^2 - 2\sigma\cos(\phi)}{1 + \sigma^2}\right)\\
&= \frac{1}{4}\frac{1 + \sigma^2 - 2\sigma\cos(\phi)}{1 + \sigma^2}\frac{1 + \sigma^2 + 2\sigma\cos(\phi)}{1 + \sigma^2}\\
&= \frac{1}{4}\frac{(1 + \sigma^2)^2 - 4\sigma^2\cos(\phi)^2}{(1 + \sigma^2)^2}\\
&= \eta_{j-1}.
\end{align*}
The fourth case is where $t_{j-1} = 10$. In this case we either have $g_{j-1} = \frac{1}{2}\frac{1 + \sigma^2 + 2\sigma\cos(\phi)}{1 + \sigma^2} \geq \frac{1}{2}$ if $t_{j-2} \in \set{3,11}$ or $g_{j-1} = \frac{1}{2}$ if not:
\begin{align*}
g_j(1 - g_{j-1}) &\geq \frac{1}{2}\frac{1 + \sigma^2 - 2\sigma\cos(\phi)}{1 + \sigma^2}\left(1 - \frac{1}{2}\frac{1 + \sigma^2 + 2\sigma\cos(\phi)}{1 + \sigma^2}\right)\\
&= \frac{1}{4}\frac{(1 + \sigma^2 - 2\sigma\cos(\phi))^2}{(1 + \sigma^2)^2}\\
&\geq \frac{1}{4}\frac{(1 + \sigma^4)^2}{(1 + \sigma)^4}\\
&\geq \eta_{j-1},
\end{align*}
where we used \eqref{cosineq3} in line 2 and \eqref{cosineq2} and in line 3. Note that this case matches perfectly with the second case above. The fifth case is where $t_{j-1} = 14$. In this case we have $g_{j-1} = \frac{1}{2}$:
\begin{align*}
g_j(1 - g_{j-1}) &= \frac{1}{4}\frac{1 + \sigma^2 - 2\sigma\cos(\phi)}{1 + \sigma^2}\\
&= \frac{1}{4}\frac{(1 + \sigma^2)^2 - 4\sigma^2\cos(\phi)^2}{(1 + \sigma^2 + 2\sigma\cos(\phi))(1 + \sigma^2)}\\
&= \eta_{j-1}.
\end{align*}
Again we conclude that either $g_{j-1} \geq \frac{1}{2}$ (note that the inequality is in the other direction this time, which is good!) or $g_{j-1} = g_j$. Thus either we started where we ended, namely $\frac{1}{2}$ (or even better, we started with something that is greater than or equal to $\frac{1}{2}$ and the sequence reduced to $\frac{1}{2}$, compare with the mentioned special case above), or we are in the third case, where $t_{j-1} = 6$. But in this case we have $h_{2j-1} = h_{2j-3}$ and $h_{2j-2} = h_{2j-4}$ and thus we again have the same four cases for $\eta_{j-2}$ and so on. Thus we either end up at $g_1$, which is fine or we eventually have $g_k \geq \frac{1}{2}$ for some $k \leq j-1$. In either case we are done by induction.
\end{proof}

So we are done with the case $\phi \in [\phi^*,\frac{\pi}{2}]$. This means that there is only the case $\phi \in [0,\phi^*]$ left. All the other angles will follow by symmetry. Let us now consider the table for the angles $\phi \in [0,\phi^*]$. Remember that we have
\[N(\phi) = 2\sigma\cos(\phi) + \sqrt{( 1 + \sigma^2)^2\cos(\phi)^2 + (1 - \sigma^2)^2\sin(\phi)^2} = 2\sigma\cos(\phi) + \sqrt{(1 - \sigma^2)^2 + 4\sigma^2\cos(\phi)^2}\]
here and let us drop the $\phi$ in $N(\phi)$ for the sake of readability.

\renewcommand{\arraystretch}{1.5}

\begin{center}
\large{
\begin{tabular}{|c|c|c|} \hline
$t_j$ & $(h_{2j-1}, h_{2j}, h_{2j+1}, h_{2j+2})$ & $\eta_j(\phi)$\\ \hline
$1$ & $(\sigma,\sigma,\sigma,\sigma)$ & $\frac{1}{4}$\\ \hline
$2$ & $(\sigma,\sigma,\sigma,-\sigma)$ & $\frac{(1 - \sigma^2)^2 + 4\sigma^2\cos(\phi)^2}{4(N - 2\sigma\cos(\phi))N}$\\ \hline
$3$ & $(\sigma,\sigma,-\sigma,\sigma)$ & $\frac{(1 + \sigma^2)^2 - 4\sigma^2\cos(\phi)^2}{4(N - 2\sigma\cos(\phi))N}$\\ \hline
$4$ & $(\sigma,\sigma,-\sigma,-\sigma)$ & $\frac{(1 + \sigma^2)^2 - 4\sigma^2\cos(\phi)^2}{4(N - 2\sigma\cos(\phi))(N + 2\sigma\cos(\phi))}$\\ \hline
$5$ & $(\sigma,-\sigma,\sigma,\sigma)$ & $\frac{(1 + \sigma^2)^2 - 4\sigma^2\cos(\phi)^2}{4(N - 2\sigma\cos(\phi))N}$\\ \hline
$6$ & $(\sigma,-\sigma,\sigma,-\sigma)$ & $\frac{(1 + \sigma^2)^2 - 4\sigma^2\cos(\phi)^2}{4N^2}$\\ \hline
$7$ & $(\sigma,-\sigma,-\sigma,\sigma)$ & $\frac{(1 - \sigma^2)^2 + 4\sigma^2\cos(\phi)^2}{4N^2}$\\ \hline
$8$ & $(\sigma,-\sigma,-\sigma,-\sigma)$ & $\frac{(1 - \sigma^2)^2 + 4\sigma^2\cos(\phi)^2}{4(N + 2\sigma\cos(\phi))N}$\\ \hline
$9$ & $(-\sigma,\sigma,\sigma,\sigma)$ & $\frac{(1 - \sigma^2)^2 + 4\sigma^2\cos(\phi)^2}{4(N - 2\sigma\cos(\phi))N}$\\ \hline
$10$ & $(-\sigma,\sigma,\sigma,-\sigma)$ & $\frac{(1 - \sigma^2)^2 + 4\sigma^2\cos(\phi)^2}{4N^2}$\\ \hline
$11$ & $(-\sigma,\sigma,-\sigma,\sigma)$ & $\frac{(1 + \sigma^2)^2 - 4\sigma^2\cos(\phi)^2}{4N^2}$\\ \hline
$12$ & $(-\sigma,\sigma,-\sigma,-\sigma)$ & $\frac{(1 + \sigma^2)^2 - 4\sigma^2\cos(\phi)^2}{4(N+ 2\sigma\cos(\phi))N}$\\ \hline
$13$ & $(-\sigma,-\sigma,\sigma,\sigma)$ & $\frac{(1 + \sigma^2)^2 - 4\sigma^2\cos(\phi)^2}{4(N - 2\sigma\cos(\phi))(N + 2\sigma\cos(\phi))}$\\ \hline
$14$ & $(-\sigma,-\sigma,\sigma,-\sigma)$ & $\frac{(1 + \sigma^2)^2 - 4\sigma^2\cos(\phi)^2}{4(N + 2\sigma\cos(\phi))N}$\\ \hline
$15$ & $(-\sigma,-\sigma,-\sigma,\sigma)$ & $\frac{(1 - \sigma^2)^2 + 4\sigma^2\cos(\phi)^2}{4(N + 2\sigma\cos(\phi))N}$\\ \hline
$16$ & $(-\sigma,-\sigma,-\sigma,-\sigma)$ & $\frac{(1 - \sigma^2)^2 + 4\sigma^2\cos(\phi)^2}{4(N + 2\sigma\cos(\phi))^2}$\\ \hline
\end{tabular}
}
\captionof{table}{} \label{table2}
\end{center}

We will find the following equalities and inequalities useful:
\begin{align}
N &\geq 1 + \sigma^2\label{cosineq5}\\
\cos(\phi) &\geq \frac{\sigma}{1 + \sigma^2}\label{cosineq6}\\
N - 2\sigma\cos(\phi) &= \sqrt{(1 - \sigma^2)^2 + 4\sigma^2\cos(\phi)^2} \geq \sqrt{(1 - \sigma^2)^2 +\frac{4\sigma^4}{(1 + \sigma^2)^2}} = \frac{1 + \sigma^4}{1 + \sigma^2}\label{cosineq7}\\
N + 2\sigma\cos(\phi) &\geq \frac{1 + \sigma^4}{1 + \sigma^2} + \frac{4\sigma^2}{1 + \sigma^2} = \frac{1 + 4\sigma^2 + \sigma^4}{1 + \sigma^2}\label{cosineq8}\\
(1 - \sigma^2)^2 + 4\sigma^2\cos(\phi)^2 &= (N - 2\sigma\cos(\phi))^2\label{cosineq9}\\
(1 + \sigma^2)^2 - 4\sigma^2\cos(\phi)^2 &\leq (1 + \sigma^2)^2 - \frac{4\sigma^4}{(1 + \sigma^2)^2} = \frac{(1 + 4\sigma^2 + \sigma^4)(1 + \sigma^4)}{(1 + \sigma^2)^2}\label{cosineq10}
\end{align}

Using these, it is not difficult to see that $\eta_j(\phi) \leq \frac{1}{2}$ for all $\phi \in [0,\phi^*]$ and $j \in \N$ (i.e.~for all possible values of $\eta_j(\phi)$ in Table \ref{table2} and $\eta_j(\phi) \leq \frac{1}{4}$ for all $\phi \in [0,\phi^*]$ and $j \in \N$ with $t_j \notin \set{3,5}$. If
\[(1 + \sigma^2)^2 - 4\sigma^2\cos(\phi)^2 \leq (N - 2\sigma\cos(\phi))N,\]
then even $\eta_j(\phi) \leq \frac{1}{4}$ for all $\phi \in [0,\phi^*]$ and $j \in \N$ (i.e.~also if $t_j \in \set{3,5}$). In this case we can just choose $g_j = \frac{1}{2}$ for all $j \in \N$ and we are done. It thus remains to consider the case where
\[(1 + \sigma^2)^2 - 4\sigma^2\cos(\phi)^2 > (N - 2\sigma\cos(\phi))N.\]
The argument is now exactly the same as in the proof of Proposition \ref{case1}.

\begin{prop} \label{case2}
Let $\sigma \in (0,1]$, $U_{-1} = \left\{1\right\}$, $U_0 = \left\{0\right\}$, $U_1 = \left\{\pm \sigma\right\}$ and let $A \in \Psi E(U_{-1},U_0,U_1)$. Let $\phi \in [0,\phi^*]$, $\eta_j := \eta_j(\phi)$ and $t_j$ for all $j \in \N$ be defined as above. Further assume that
\[(1 + \sigma^2)^2 - 4\sigma^2\cos(\phi)^2 > (N - 2\sigma\cos(\phi))N.\]
Then the sequence $(g_j)_{j \in \N}$, defined  by the following prescription, satisfies $g_j \in [0,1]$ and $\eta_j \leq g_{j+1}(1 - g_j)$ for all $j \in \N$:
\begin{itemize}
	\item If $t_1 = 5$, choose $g_1 = 1 - \frac{1}{2}\frac{(1 + \sigma^2)^2 - 4\sigma^2\cos(\phi)^2}{(N - 2\sigma\cos(\phi))N}$.
	\item If there is some $k \in \N$ such that $t_1 = \ldots = t_k = 6$ and $t_{k+1} = 5$, choose $g_1 = 1 - \frac{1}{2}\frac{(1 + \sigma^2)^2 - 4\sigma^2\cos(\phi)^2}{(N - 2\sigma\cos(\phi))N}$.
	\item If neither is true, choose $g_1 = \frac{1}{2}$.
	\item If $t_j \in \set{2,6,10,14}$ and $t_{j+1} = 5$, choose $g_{j+1} = 1 - \frac{1}{2}\frac{(1 + \sigma^2)^2 - 4\sigma^2\cos(\phi)^2}{(N - 2\sigma\cos(\phi))N}$.
	\item If $t_j \in \set{2,6,10,14}$, there is some $k > j$ such that $t_{j+1} = \ldots = t_k = 6$ and $t_{k+1} = 5$, choose $g_{j+1} = 1 - \frac{1}{2}\frac{(1 + \sigma^2)^2 - 4\sigma^2\cos(\phi)^2}{(N - 2\sigma\cos(\phi))N}$.
	\item If $t_j = 3$, choose $g_{j+1} = \frac{1}{2}\frac{(1 + \sigma^2)^2 - 4\sigma^2\cos(\phi)^2}{(N - 2\sigma\cos(\phi))N}$.
	\item If $t_j = 11$, there is some $k \leq j$ such that $t_k = \ldots = t_j = 11$ and $t_{k-1} = 3$, choose $g_{j+1} = \frac{1}{2}\frac{(1 + \sigma^2)^2 - 4\sigma^2\cos(\phi)^2}{(N - 2\sigma\cos(\phi))N}$.
	\item If none of the above is true, choose $g_{j+1} = \frac{1}{2}$.
\end{itemize}
\end{prop}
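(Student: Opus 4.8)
The plan is to run the proof of Proposition~\ref{case1} essentially verbatim, the only structural change being that the constant $1+\sigma^2$ (which is the value of $N(\phi)$ throughout $[\phi^*,\frac{\pi}{2}]$) is replaced by $N=N(\phi)=2\sigma\cos(\phi)+\sqrt{(1-\sigma^2)^2+4\sigma^2\cos(\phi)^2}$, and the inequalities \eqref{cosineq1}--\eqref{cosineq4} by \eqref{cosineq5}--\eqref{cosineq10}. It is convenient to abbreviate
\[\mu:=\frac{(1+\sigma^2)^2-4\sigma^2\cos(\phi)^2}{(N-2\sigma\cos(\phi))N},\]
so that the standing hypothesis reads precisely $\mu>1$ and, by \eqref{cosineq9}, $\eta_j=\mu/4$ whenever $t_j\in\set{3,5}$. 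Every value prescribed for $g_j$ in the statement is one of $\frac12$, $\mu/2$, $1-\mu/2$, so the claim $g_j\in[0,1]$ reduces to $0\le\mu\le2$: the lower bound is the hypothesis, and the upper bound follows from \eqref{cosineq9} (which gives $2(N-2\sigma\cos(\phi))N=N^2+(1-\sigma^2)^2$) together with \eqref{cosineq5} (which gives $N^2+(1-\sigma^2)^2\ge(1+\sigma^2)^2\ge(1+\sigma^2)^2-4\sigma^2\cos(\phi)^2$).

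For the inequality $\eta_j\le g_{j+1}(1-g_j)$ I would first recall the observation recorded just before the statement: $\eta_j\le\frac14$ whenever $t_j\notin\set{3,5}$, so when $(t_j)$ never hits $3$ or $5$ the constant sequence $g_j\equiv\frac12$ already works. The remaining task is to control the sequence near an index with $t_j\in\set{3,5}$, and this is done exactly as in Proposition~\ref{case1}. When $t_j=3$ and $g_j=\frac12$, the choice $g_{j+1}=\mu/2$ gives $g_{j+1}(1-g_j)=\mu/4=\eta_j$; since $t_j=3$ forces $t_{j+1}\in\set{9,10,11,12}$, one checks $\eta_{j+1}\le g_{j+2}(1-g_{j+1})$ in each of these four cases with the prescribed $g_{j+2}$, now using \eqref{cosineq5}--\eqref{cosineq10} where the proof of Proposition~\ref{case1} used \eqref{cosineq1}--\eqref{cosineq4}; in the case $t_{j+1}=11$ one has $g_{j+2}=g_{j+1}$ (where now $g_{j+2}(1-g_{j+1})\ge\eta_{j+1}$, rather than the equality that held in Proposition~\ref{case1}) and the same four-way branching repeats, terminating because a pseudo-ergodic $A$ cannot have $t_j=11$ for all large $j$. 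The value $t_j=5$ is treated backwards in the mirror-image way: $g_j=1-\mu/2$, $g_{j+1}=\frac12$, $g_{j+1}(1-g_j)=\mu/4=\eta_j$, and one checks $\eta_{j-1}\le g_j(1-g_{j-1})$ according to which of $t_{j-1}\in\set{2,6,10,14}$ occurs (with $j=1$ trivial), the case $t_{j-1}=6$ recursing and terminating for the same reason. The clauses fixing $g_1$ (for $t_1=5$, or a maximal run $t_1=\ldots=t_k=6$ with $t_{k+1}=5$) are exactly the boundary instances of these two analyses, precisely as in Proposition~\ref{case1}; and the combinatorial bookkeeping — which transitions $t_j\to t_{j+1}$ are possible, hence that the adjustments around $t_j\in\set{3,5}$ stay local — depends only on the shift structure of $(h_j)$, not on $\phi$ or $N$, so it is inherited from Proposition~\ref{case1} with no change.

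The one thing that actually needs doing is verifying that \eqref{cosineq5}--\eqref{cosineq10} are tight enough to close each of the finitely many sub-cases. The algebraic fact that makes this go through — and the reason $N$ is chosen to be $N(\phi)$ — is the identity $N^2-4\sigma\cos(\phi)N=(1-\sigma^2)^2$, i.e.\ \eqref{cosineq9}, which lets one rewrite the prescribed values in the forms needed for the estimates: for instance $1-\mu/2=\frac{N^2-4\sigma^2\sin(\phi)^2}{2(N-2\sigma\cos(\phi))N}$, while $(1-\sigma^2)^2+4\sigma^2\cos(\phi)^2=(N-2\sigma\cos(\phi))^2$ turns every $\eta_j$ with that numerator into a perfect-square ratio (e.g.\ $\eta_j=\frac{N-2\sigma\cos(\phi)}{4N}$ for $t_j=9$). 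After these substitutions each required inequality collapses to an elementary comparison among $(N-2\sigma\cos(\phi))^2$, $(N-2\sigma\cos(\phi))N$ and $N^2$ — ultimately to $N\ge1+\sigma^2$ and $N\cos(\phi)\ge\sigma$, both immediate from \eqref{cosineq5} and \eqref{cosineq6}. This mechanical check is the only real obstacle; once it is dispatched, the induction closes word for word as in Proposition~\ref{case1}, so I would not reproduce it in full.
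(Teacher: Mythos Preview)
Your proposal is correct and follows essentially the same approach as the paper: the paper's proof also opens with ``The proof is exactly the same as the proof of Proposition~\ref{case1}. We only have to change the numbers,'' and then carries out the parallel case analysis using \eqref{cosineq5}--\eqref{cosineq10} in place of \eqref{cosineq1}--\eqref{cosineq4}. Your abbreviation $\mu$ and the identity $2(N-2\sigma\cos\phi)N=N^2+(1-\sigma^2)^2$ are a nice way to organize the $g_j\in[0,1]$ check, but the paper simply writes out each of the nine sub-cases (four after $t_j=3$, five before $t_j=5$) explicitly rather than appealing to a unified reduction; since you explicitly defer this ``mechanical check,'' your write-up is a faithful outline rather than a complete proof, but nothing in it is wrong or misses an idea.
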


\begin{proof}
The proof is exactly the same as the proof of Proposition \ref{case1}. We only have to change the numbers. That $g_j \in [0,1]$ holds for all $j \in \N$ follows from \eqref{cosineq5}, \eqref{cosineq7} and \eqref{cosineq10}. So it remains to prove $\eta_j \leq g_{j+1}(1 - g_j)$. Above we observed that $\eta_j \leq \frac{1}{4}$ unless $t_j \in \set{3,5}$. Thus if the cases $t_j = 3$ and $t_j = 5$ do not occur, then $\eta_j \leq g_{j+1}(1 - g_j)$ is obviously satisfied. So we are left with the cases $t_j = 3$ and $t_j = 5$ again.

Let us consider the case $t_j = 3$ first and assume that $g_j = \frac{1}{2}$. Then by definition
\[g_{j+1} = \frac{1}{2}\frac{(1 + \sigma^2)^2 - 4\sigma^2\cos(\phi)^2}{(N - 2\sigma\cos(\phi))N}\]
and
\[g_{j+1}(1 - g_j) = \frac{1}{4}\frac{(1 + \sigma^2)^2 - 4\sigma^2\cos(\phi)^2}{(N - 2\sigma\cos(\phi))N} = \eta_j.\]
Now there are four possible cases for $\eta_{j+1}$:
\begin{align*}
\eta_{j+1} &= \frac{(1 - \sigma^2)^2 + 4\sigma^2\cos(\phi)^2}{4(N - 2\sigma\cos(\phi))N} \quad (t_{j+1} = 9),\\
\eta_{j+1} &= \frac{(1 - \sigma^2)^2 + 4\sigma^2\cos(\phi)^2}{4N^2} \quad (t_{j+1} = 10),\\
\eta_{j+1} &= \frac{(1 + \sigma^2)^2 - 4\sigma^2\cos(\phi)^2}{4N^2} \quad (t_{j+1} = 11),\\
\eta_{j+1} &= \frac{(1 + \sigma^2)^2 - 4\sigma^2\cos(\phi)^2}{4(N+ 2\sigma\cos(\phi))N} (t_{j+1} = 12).
\end{align*}
In the first case we have $g_{j+2} = \frac{1}{2}$:
\begin{align*}
g_{j+2}(1 - g_{j+1}) &= \frac{1}{2} - \frac{1}{4}\frac{(1 + \sigma^2)^2 - 4\sigma^2\cos(\phi)^2}{(N - 2\sigma\cos(\phi))N}\\
&= \frac{1}{4}\frac{2(N - 2\sigma\cos(\phi))N - (1 + \sigma^2)^2 + 4\sigma^2\cos(\phi)^2}{(N - 2\sigma\cos(\phi))N}\\
&\geq \frac{1}{4}\frac{2(1 + \sigma^4) - (1 + \sigma^2)^2 + 4\sigma^2\cos(\phi)^2}{(N - 2\sigma\cos(\phi))N}\\
&= \frac{1}{4}\frac{(1 - \sigma^2)^2 + 4\sigma^2\cos(\phi)^2}{(N - 2\sigma\cos(\phi))N}\\
&= \eta_{j+1},
\end{align*}
where we used \eqref{cosineq5} and \eqref{cosineq7} in line 2. In the second case we have $g_{j+2} = 1 - \frac{1}{2}\frac{(1 + \sigma^2)^2 - 4\sigma^2\cos(\phi)^2}{(N - 2\sigma\cos(\phi))N} \leq \frac{1}{2}$ if $t_{j+2} \in \set{5,6}$ and $g_{j+2} = \frac{1}{2}$ if not:
\begin{align*}
g_{j+2}(1 - g_{j+1}) &\geq \left(1 - \frac{1}{2}\frac{(1 + \sigma^2)^2 - 4\sigma^2\cos(\phi)^2}{(N - 2\sigma\cos(\phi))N}\right)\left(1 - \frac{1}{2}\frac{(1 + \sigma^2)^2 - 4\sigma^2\cos(\phi)^2}{(N - 2\sigma\cos(\phi))N}\right)\\
&= \frac{1}{4}\left(\frac{(2N(N - 2\sigma\cos(\phi)) - (1 + \sigma^2)^2 + 4\sigma^2\cos(\phi)^2}{(N - 2\sigma\cos(\phi))N}\right)^2\\
&\geq \frac{1}{4}\left(\frac{(N(N - 2\sigma\cos(\phi)) + 1 + \sigma^4 - (1 + \sigma^2)^2 + 4\sigma^2\cos(\phi)^2}{(N - 2\sigma\cos(\phi))N}\right)^2\\
&\geq \frac{1}{4}\left(\frac{(N(N - 2\sigma\cos(\phi)) - 2\sigma N\cos(\phi) + 4\sigma^2\cos(\phi)^2}{(N - 2\sigma\cos(\phi))N}\right)^2\\
&= \frac{1}{4} \frac{(N - 2\sigma\cos(\phi))^2}{N^2}\\
&= \frac{1}{4} \frac{(\sqrt{(1-\sigma^2)^2 + 4\sigma^2\cos(\phi)^2})^2}{N^2}\\
&= \eta_{j+1},
\end{align*}
where we used \eqref{cosineq5} and \eqref{cosineq7} in line 2 and \eqref{cosineq5} and \eqref{cosineq6} in line 3. In the third case we have $g_{j+2} = \frac{1}{2}\frac{(1 + \sigma^2)^2 - 4\sigma^2\cos(\phi)^2}{(N - 2\sigma\cos(\phi))N}$:
\begin{align*}
g_{j+2}(1 - g_{j+1}) &= \frac{1}{2}\frac{(1 + \sigma^2)^2 - 4\sigma^2\cos(\phi)^2}{(N - 2\sigma\cos(\phi))N}\left(1 - \frac{1}{2}\frac{(1 + \sigma^2)^2 - 4\sigma^2\cos(\phi)^2}{(N - 2\sigma\cos(\phi))N}\right)\\
&\geq \frac{1}{4}\frac{(1 + \sigma^2)^2 - 4\sigma^2\cos(\phi)^2}{(N - 2\sigma\cos(\phi))N}\frac{N - 2\sigma\cos(\phi)}{N}\\
&= \frac{1}{4}\frac{(1 + \sigma^2)^2 - 4\sigma^2\cos(\phi)^2}{N^2}\\
&= \eta_{j+1}
\end{align*}
like in the second case. In the fourth case we have $g_{j+2} = \frac{1}{2}$:
\begin{align*}
g_{j+2}(1 - g_{j+1}) &= \frac{1}{2} - \frac{1}{4}\frac{(1 + \sigma^2)^2 - 4\sigma^2\cos(\phi)^2}{(N - 2\sigma\cos(\phi))N}\\
&\geq \frac{1}{2} - \frac{1}{4}\frac{1 + 4\sigma^2 + \sigma^4}{(1 + \sigma^2)^2}\\
&= \frac{1}{4}\frac{1 + \sigma^4}{(1 + \sigma^2)^2}\\
&\geq \frac{1}{4}\frac{(1 + \sigma^2)^2 - 4\sigma^2\cos(\phi)^2}{(N+ 2\sigma\cos(\phi))N}\\
&= \eta_{j+1},
\end{align*}
where we used \eqref{cosineq5}, \eqref{cosineq7} and \eqref{cosineq10} in line 1 and \eqref{cosineq5}, \eqref{cosineq8} and \eqref{cosineq10} in line 3. So either $g_{j+2} \leq \frac{1}{2}$ or $g_{j+2} = g_{j+1}$. As in the proof of Proposition \ref{case1} we conclude that we eventually go out with $g_k \leq \frac{1}{2}$ for some $k \geq j+2$. Thus we are done by induction if we can control the case $t_j = 5$ as well.

So assume $t_j = 5$. Then $g_j = 1 - \frac{1}{2}\frac{(1 + \sigma^2)^2 - 4\sigma^2\cos(\phi)^2}{(N - 2\sigma\cos(\phi))N}$ and $g_{j+1} = \frac{1}{2}$ by definition and thus
\[g_{j+1}(1 - g_j) = \frac{1}{4}\frac{(1 + \sigma^2)^2 - 4\sigma^2\cos(\phi)^2}{(N - 2\sigma\cos(\phi))N} = \eta_j.\]
Again there are five cases here. The first case is $j = 1$, which is again trivial. The second case is where $t_{j-1} = 2$. In this case we have $g_{j-1} = \frac{1}{2}$:
\begin{align*}
g_j(1 - g_{j-1}) &= \frac{1}{2} - \frac{1}{4}\frac{(1 + \sigma^2)^2 - 4\sigma^2\cos(\phi)^2}{(N - 2\sigma\cos(\phi))N}\\
&= \frac{1}{4}\frac{2(N - 2\sigma\cos(\phi))N - (1 + \sigma^2)^2 + 4\sigma^2\cos(\phi)^2}{(N - 2\sigma\cos(\phi))N}\\
&\geq \frac{1}{4}\frac{2(1 + \sigma^4) - (1 + \sigma^2)^2 + 4\sigma^2\cos(\phi)^2}{(N - 2\sigma\cos(\phi))N}\\
&= \frac{1}{4}\frac{(1 - \sigma^2)^2 + 4\sigma^2\cos(\phi)^2}{(N - 2\sigma\cos(\phi))N}\\
&= \eta_{j+1},
\end{align*}
where we used \eqref{cosineq5} and \eqref{cosineq7} in line 2. The third case is where $t_{j-1} = 6$. In this case we have $g_{j-1} = 1 - \frac{1}{2}\frac{(1 + \sigma^2)^2 - 4\sigma^2\cos(\phi)^2}{(N - 2\sigma\cos(\phi))N}$:
\begin{align*}
g_j(1 - g_{j-1}) &= \left(1 - \frac{1}{2}\frac{(1 + \sigma^2)^2 - 4\sigma^2\cos(\phi)^2}{(N - 2\sigma\cos(\phi))N}\right)\frac{1}{2}\frac{(1 + \sigma^2)^2 - 4\sigma^2\cos(\phi)^2}{(N - 2\sigma\cos(\phi))N}\\
&= \frac{1}{4}\frac{(2N(N - 2\sigma\cos(\phi)) - (1 + \sigma^2)^2 + 4\sigma^2\cos(\phi)^2}{(N - 2\sigma\cos(\phi))N}\frac{(1 + \sigma^2)^2 - 4\sigma^2\cos(\phi)^2}{(N - 2\sigma\cos(\phi))N}\\
&\geq \frac{1}{4}\frac{(N(N - 2\sigma\cos(\phi)) + 1 + \sigma^4 - (1 + \sigma^2)^2 + 4\sigma^2\cos(\phi)^2}{(N - 2\sigma\cos(\phi))N}\frac{(1 + \sigma^2)^2 - 4\sigma^2\cos(\phi)^2}{(N - 2\sigma\cos(\phi))N}\\
&\geq \frac{1}{4}\frac{(N(N - 2\sigma\cos(\phi)) - 2\sigma N\cos(\phi) + 4\sigma^2\cos(\phi)^2}{(N - 2\sigma\cos(\phi))N}\frac{(1 + \sigma^2)^2 - 4\sigma^2\cos(\phi)^2}{(N - 2\sigma\cos(\phi))N}\\
&= \frac{1}{4} \frac{N - 2\sigma\cos(\phi)}{N}\frac{(1 + \sigma^2)^2 - 4\sigma^2\cos(\phi)^2}{(N - 2\sigma\cos(\phi))N}\\
&= \frac{1}{4}\frac{(1 + \sigma^2)^2 - 4\sigma^2\cos(\phi)^2}{N^2}\\
&= \eta_{j-1},
\end{align*}
where we used \eqref{cosineq5} and \eqref{cosineq7} in line 2 and \eqref{cosineq5} and \eqref{cosineq6} in line 3. The fourth case is where $t_{j-1} = 10$. In this case we either have $g_{j-1} = \frac{1}{2}\frac{(1 + \sigma^2)^2 - 4\sigma^2\cos(\phi)^2}{(N - 2\sigma\cos(\phi))N} \geq \frac{1}{2}$ if $t_{j-2} \in \set{3,11}$ or $g_{j-1} = \frac{1}{2}$ if not:
\begin{align*}
g_j(1 - g_{j-1}) &\geq \left(1 - \frac{1}{2}\frac{(1 + \sigma^2)^2 - 4\sigma^2\cos(\phi)^2}{(N - 2\sigma\cos(\phi))N}\right)\left(1 - \frac{1}{2}\frac{(1 + \sigma^2)^2 - 4\sigma^2\cos(\phi)^2}{(N - 2\sigma\cos(\phi))N}\right)\\
&= \frac{1}{4} \frac{(N - 2\sigma\cos(\phi))^2}{N^2}\\
&= \frac{1}{4} \frac{(\sqrt{(1-\sigma^2)^2 + 4\sigma^2\cos(\phi)^2})^2}{N^2}\\
&= \eta_{j+1}.
\end{align*}
The fifth case is where $t_{j-1} = 14$. In this case we have $g_{j-1} = \frac{1}{2}$:
\begin{align*}
g_j(1 - g_{j-1}) &= \frac{1}{2} - \frac{1}{4}\frac{(1 + \sigma^2)^2 - 4\sigma^2\cos(\phi)^2}{(N - 2\sigma\cos(\phi))N}\\
&\geq \frac{1}{2} - \frac{1}{4}\frac{1 + 4\sigma^2 + \sigma^4}{(1 + \sigma^2)^2}\\
&= \frac{1}{4}\frac{1 + \sigma^4}{(1 + \sigma^2)^2}\\
&\geq \frac{1}{4}\frac{(1 + \sigma^2)^2 - 4\sigma^2\cos(\phi)^2}{(N+ 2\sigma\cos(\phi))N}\\
&= \eta_{j+1},
\end{align*}
where we used \eqref{cosineq5}, \eqref{cosineq7} and \eqref{cosineq10} in line 1 and \eqref{cosineq5}, \eqref{cosineq8} and \eqref{cosineq10} in line 3. As in the proof of Proposition \ref{case1} we conclude that we either end up at $g_1$, which is fine or we eventually have $g_k \geq \frac{1}{2}$ for some $k \leq j-1$. In either case we are done by induction.
\end{proof}

Using the sequences obtained in Proposition \ref{case1} and Proposition \ref{case2}, we can now apply Lemma \ref{SzwarcLemma1} to prove Theorem \ref{N(A^2)}:

\begin{proof}[Proof of Theorem \ref{N(A^2)}]
Let $A \in \PsiE(U_{-1},U_0,U_1)$. The inclusion
\[N(A^2) \supseteq \conv\left(N(B_1^2) \cup N(B_2^2) \cup N(B_3^2)\right)\]
is clear by Theorem \ref{mtnr} and the fact that $\sigma^{\op}(B^2) = \sigma^{\op}(B)^2$ (see Proposition \ref{basic}). To prove the other inclusion, we have to show $r_\phi(A^2) \leq N(\phi)$ for all $\phi \in [0,2\pi)$, where $N(\phi)$ is given by Proposition \ref{ValuesOfN}. Using the transformations $\phi \mapsto \pi - \phi$ and $\phi \mapsto \phi + \pi$, it is clear that is suffices to consider $\phi \in [0,\frac{\pi}{2}]$. Indeed, $N(\phi)$ is invariant under these transformations and in the Tables \ref{table1} and \ref{table2} only the roles of $+\sigma$ and $-\sigma$ are interchanged. To apply Lemma \ref{SzwarcLemma1} to $E(\phi)$, we have to assure
\[E_{j,j+1}(\phi) = \frac{1}{2}\abs{e^{i\phi}(C_+)_{j,j+1} + e^{-i\phi}\overline{(C_+)_{j,j+1}}} > 0\]
and $E_{j,j}(\phi) > 0$ for all $\phi \in [0,\frac{\pi}{2}]$. The latter can be achieved by shifting and the former can only fail if $\sigma = 1$ and $\phi = 0$. But in this case we trivially have
\[r_0(E(\phi)) \leq \norm{E(\phi)} \leq 4 = N(\phi)\]
by the Wiener estimate (e.g.~\cite[p.~25]{Marko}). Moreover, we clearly have $N(\phi) > \sup\limits_{j \in \N} E_{j,j}(\phi)$ as $E_{j,j} \in \set{-2\sigma\cos(\phi),0,2\sigma\cos(\phi)}$ for all $j \in \N$ and $\phi \in [0,\frac{\pi}{2}]$ (cf.~Proposition \ref{ValuesOfN}). We can thus apply Lemma \ref{SzwarcLemma1}, using the sequences from Proposition \ref{case1} and Proposition \ref{case2} (including the trivial case where $(1 + \sigma^2)^2 - 4\sigma^2\cos(\phi)^2 \leq (N - 2\sigma\cos(\phi))N$), to obtain $r_\phi(A^2) = r_0(E(\phi)) \leq N(\phi)$ for all $\phi \in [0,\frac{\pi}{2}]$ and hence all $\phi \in [0,2\pi)$.

The inclusion for more general operators $A \in M(U_{-1},U_0,U_1)$ now follows from Theorem \ref{mtnr} and Proposition \ref{basic} again.
\end{proof}

In Figure \ref{sigma=1} we can see that $\sqrt{N(A^2)}$ is indeed a tighter upper bound to the spectrum than $N(A)$. Moreover, it shows that $\spec(A)$ is not equal to $N(A)$ and thus not convex. This confirmes and improves the numerical results obtained in \cite{ChaChoLi2}. A rigorous proof of this observation can be found in Section \ref{ExplForm}.

\begin{figure}[ht]
\begin{center}
\includegraphics[scale = 1]{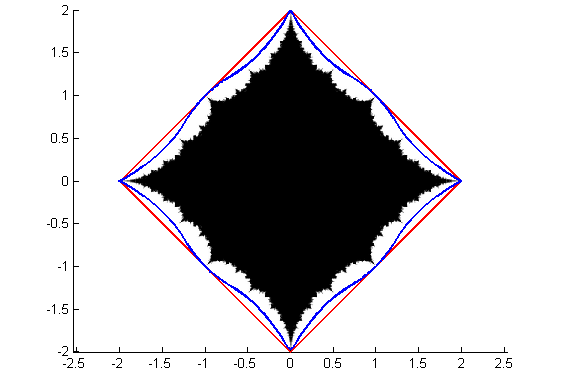}
\end{center}

\caption{The boundary of $\sqrt{N(A^2)}$ (blue), the boundary of $N(A)$ (red) and a lower bound to $\spec(A)$ consisting of spectra of periodic operators and the closed unit disk (black, see \cite{ChaChoLi},\cite{ChaChoLi2}) in the case $\sigma = 1$.} \label{sigma=1}
\end{figure}

\subsection{\texorpdfstring{A proof that $\sqrt{N(A^2)} \subset N(A)$}{}} \label{ExplForm}

In this section we provide formulas for $N(A)$, $N(A)^2$ and $N(A^2)$ in terms of graphs of explicit functions. These follow from elementary computations using Theorem \ref{ntridiag}, Theorem \ref{N(A^2)} and Proposition \ref{B1B2B3}. These formulas then allow us to show that $\sqrt{N(A^2)}$ is indeed a proper subset of $N(A)$.

\begin{prop} \label{nrFZ}
Let $\sigma \in (0,1]$, $U_{-1} = \left\{1\right\}$, $U_0 = \left\{0\right\}$, $U_1 = \left\{\pm \sigma\right\}$ and $A \in \PsiE(U_{-1},U_0,U_1)$. Then
\[N(A) = \set{x + iy \in \C : -f(x) \leq y \leq f(x), -(1+\sigma) \leq x \leq 1+\sigma},\]
where $f \from [-(1+\sigma),1+\sigma] \to \R$ is given by
\[f(x) = \begin{cases} (1-\sigma)\sqrt{1-(\frac{x}{1+\sigma})^2} & \text{for } x \in \left[-(1+\sigma),-\frac{(1+\sigma)^2}{\sqrt{2(1+\sigma^2)}}\right],\\
\sqrt{2(1+\sigma^2)} + x & \text{for } x \in \left(-\frac{(1+\sigma)^2}{\sqrt{2(1+\sigma^2)}},-\frac{(1-\sigma)^2}{\sqrt{2(1+\sigma^2)}}\right],\\
(1+\sigma)\sqrt{1-(\frac{x}{1-\sigma})^2} & \text{for } x \in \left(-\frac{(1-\sigma)^2}{\sqrt{2(1+\sigma^2)}},\frac{(1-\sigma)^2}{\sqrt{2(1+\sigma^2)}}\right),\\
\sqrt{2(1+\sigma^2)} - x & \text{for } x \in \left[\frac{(1-\sigma)^2}{\sqrt{2(1+\sigma^2)}},\frac{(1+\sigma)^2}{\sqrt{2(1+\sigma^2)}}\right),\\
(1-\sigma)\sqrt{1-(\frac{x}{1+\sigma})^2} & \text{for } x \in \left[\frac{(1+\sigma)^2}{\sqrt{2(1+\sigma^2)}},1+\sigma\right]. \end{cases}\]
\end{prop}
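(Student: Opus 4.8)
The starting point is part $(ii)$ of Theorem~\ref{ntridiag}. Since $U_{-1}=\set{1}$, $U_0=\set{0}$ and $U_1=\set{\pm\sigma}$, it yields
\[
N(A)=\conv\bigl(\Gamma_+\cup\Gamma_-\bigr),\qquad \Gamma_\pm:=\set{e^{i\theta}\pm\sigma e^{-i\theta}:\theta\in[0,2\pi)}.
\]
Writing $e^{i\theta}\pm\sigma e^{-i\theta}=(1\pm\sigma)\cos\theta+i(1\mp\sigma)\sin\theta$, one sees that $\Gamma_+$ is the ellipse $\tfrac{x^2}{(1+\sigma)^2}+\tfrac{y^2}{(1-\sigma)^2}=1$ and $\Gamma_-$ the ellipse $\tfrac{x^2}{(1-\sigma)^2}+\tfrac{y^2}{(1+\sigma)^2}=1$; write $D_\pm$ for the corresponding closed elliptical disks, so $N(A)=\conv(D_+\cup D_-)$. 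Reflection in the line $y=x$ interchanges $D_+$ and $D_-$, while each $D_\pm$ is invariant under $z\mapsto\bar z$ and $z\mapsto-z$; hence $N(A)$ carries the dihedral symmetry of the square. In particular $\overline{N(A)}=N(A)$, so $N(A)=\set{x+iy:-f(x)\le y\le f(x)}$ for an even concave function $f$ on $[-(1+\sigma),1+\sigma]$ (concavity by Theorem~\ref{HauToep}), and it only remains to identify $f$.

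Next I would compute the convex hull explicitly. A line $px+qy=1$ supports $D_+$ iff $(1+\sigma)^2p^2+(1-\sigma)^2q^2=1$ and supports $D_-$ iff $(1-\sigma)^2p^2+(1+\sigma)^2q^2=1$; subtracting, a common support line with $pq\neq0$ must satisfy $p^2=q^2$, and then both equations reduce to $2(1+\sigma^2)p^2=1$. So the four common tangent lines are $\pm x\pm y=c$ with $c:=\sqrt{2(1+\sigma^2)}$, and the standard tangency formula $(a^2p,b^2q)$ shows $x+y=c$ touches $D_+$ at $\bigl(\tfrac{(1+\sigma)^2}{c},\tfrac{(1-\sigma)^2}{c}\bigr)$ and $D_-$ at $\bigl(\tfrac{(1-\sigma)^2}{c},\tfrac{(1+\sigma)^2}{c}\bigr)$; one checks $\tfrac{(1\pm\sigma)^2}{c}\le 1\pm\sigma$, both equivalent to $(1\mp\sigma)^2\le 2(1+\sigma^2)$, hence always valid. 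This makes the candidate upper boundary exactly the five-piece function $f$ of the statement: an arc of $\Gamma_+$ on $\bigl[\tfrac{(1+\sigma)^2}{c},1+\sigma\bigr]$ and its mirror image, the tangent segments $y=c\mp x$ on the two middle intervals, and the arc of $\Gamma_-$ on $\bigl(-\tfrac{(1-\sigma)^2}{c},\tfrac{(1-\sigma)^2}{c}\bigr)$.

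To finish, set $R:=\set{x+iy:\abs{x}\le1+\sigma,\ \abs{y}\le f(x)}$ with $f$ as in the statement, and I would prove $R=\conv(D_+\cup D_-)$. First, $f$ is concave: on each of the five intervals it is concave (an upper elliptical arc, respectively affine), the pieces agree in value at the breakpoints, and the one-sided derivatives agree there because each breakpoint is precisely a point where the relevant line is tangent to the relevant ellipse; hence $f$ is $C^1$ at the junctions and $R$ is convex. Second, $D_+\cup D_-\subseteq R$: each $D_\pm$ lies in the half-plane bounded by every line $\pm x\pm y=c$ (the support value of $D_\pm$ in the direction $(1,1)/\sqrt2$ equals $c$), which handles the segment pieces, $f$ coincides with $\Gamma_+$ on the outer intervals, and the elementary inequality $(1+\sigma)\sqrt{1-x^2/(1-\sigma)^2}\ge(1-\sigma)\sqrt{1-x^2/(1+\sigma)^2}$, which holds exactly for $\abs{x}\le\tfrac{1-\sigma^2}{c}$, handles the comparison between the two ellipses on the remaining ranges. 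Thus $\conv(D_+\cup D_-)\subseteq R$. Conversely, $\partial R$ consists of arcs of $\Gamma_+$ and $\Gamma_-$ together with line segments whose extreme points are their endpoints — and those endpoints are the tangency points, again lying on $\Gamma_\pm$; hence every extreme point of the convex set $R$ lies in $D_+\cup D_-$, so $R\subseteq\conv(D_+\cup D_-)$. The degenerate case $\sigma=1$, where $D_\pm$ collapse to the segments $[-2,2]$ and $[-2i,2i]$, two of the intervals shrink to points, and $R$ is the square $\abs{x}+\abs{y}\le2$, is consistent with the formula and is checked separately. The only real work is this last paragraph — verifying concavity at the junctions and the handful of two-ellipse and tangent-line inequalities that pin down which curve bounds which portion of $R$ — routine but requiring care with the breakpoints and with the boundary value $\sigma=1$.
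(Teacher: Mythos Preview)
Your proof is correct and follows exactly the approach of the paper: invoke Theorem~\ref{ntridiag} to reduce $N(A)$ to the convex hull of the two ellipses $\Gamma_\pm$, and then compute that convex hull. The paper dismisses this second step as ``an elementary computation'' without any details, whereas you have carried it out carefully (common tangents, breakpoints, concavity at the junctions, the two-ellipse comparison, and the degenerate case $\sigma=1$); nothing you do departs from what the paper intends.
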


\begin{proof}
By Theorem \ref{ntridiag}, the numerical range of $A$ is given by the convex hull of the two ellipses $\set{e^{i\theta} + \sigma e^{-i\theta} : \theta \in [0,2\pi)}$ and $\set{e^{i\theta} - \sigma e^{-i\theta} : \theta \in [0,2\pi)}$. The assertion thus follows by an elementary computation.
\end{proof}

\begin{prop} \label{(nrFZ)^2}
Let $\sigma \in (0,1]$, $U_{-1} = \left\{1\right\}$, $U_0 = \left\{0\right\}$, $U_1 = \left\{\pm \sigma\right\}$ and $A \in \PsiE(U_{-1},U_0,U_1)$. Then 
\[N(A)^2 = \set{x + iy \in \C : -f(x) \leq y \leq f(x), -(1+\sigma)^2 \leq x \leq (1+\sigma)^2},\]
where $f \from [-(1+\sigma)^2,(1+\sigma)^2] \to \R$ is given by
\[f(x) = \begin{cases} (1-\sigma^2)\sqrt{1-(\frac{x+2\sigma}{1+\sigma^2})^2} & \text{for } x \in \left[-(1+\sigma)^2,-4\sigma\right),\\
1 + \sigma^2 - \frac{x^2}{4(1+\sigma^2)} & \text{for } x \in \left[-4\sigma,4\sigma\right],\\
(1-\sigma^2)\sqrt{1-(\frac{x-2\sigma}{1+\sigma^2})^2} & \text{for } x \in \left(4\sigma,(1+\sigma)^2\right].\end{cases}\]
\end{prop}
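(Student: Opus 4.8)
The plan is to exploit the description of $N(A)$ from Proposition \ref{nrFZ} as the convex hull of the two centred ellipses $\set{e^{i\theta}+\sigma e^{-i\theta}:\theta\in[0,2\pi)}$ and $\set{e^{i\theta}-\sigma e^{-i\theta}:\theta\in[0,2\pi)}$, together with the elementary fact that squaring a star-shaped set is easy to control in polar coordinates. Since both ellipses are centred at $0$, the set $N(A)$ is a compact convex set with $0$ in its interior, and it is invariant under $z\mapsto -z$. Hence $N(A)^2=\set{z^2:z\in R}$ for $R:=N(A)\cap\set{z:\Re z\ge 0}$, and $R$ is star-shaped about $0$. Writing $R=\set{re^{i\alpha}:\alpha\in[-\pi/2,\pi/2],\,0\le r\le\rho(\alpha)}$ with $\rho$ continuous and positive (the radial function of $N(A)$ restricted to $[-\pi/2,\pi/2]$), squaring gives $N(A)^2=\set{se^{i\beta}:\beta\in[-\pi,\pi],\,0\le s\le\rho(\beta/2)^2}$; in particular $N(A)^2$ is again star-shaped about $0$, $0$ lies in its interior, and $\partial N(A)^2$ is exactly the image of the arc $\partial N(A)\cap\set{z:\Re z\ge 0}$ under $z\mapsto z^2$.

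Next I would square the three portions of $\partial N(A)\cap\set{z:\Re z\ge 0}$ listed in Proposition \ref{nrFZ}: the arc of the second ellipse near $i(1+\sigma)$, the tangent segment $z=x+i\bigl(\sqrt{2(1+\sigma^2)}-x\bigr)$, and the arc of the first ellipse near $1+\sigma$ (plus their reflections in the real axis, which by the $z\mapsto\bar z$--symmetry of $N(A)$ account for the lower half). From $\bigl(e^{i\theta}\pm\sigma e^{-i\theta}\bigr)^2=\pm 2\sigma+e^{2i\theta}+\sigma^2e^{-2i\theta}$ the two elliptic arcs map onto arcs of the ellipses $\partial N(B_1^2)$ and $\partial N(B_3^2)$ of Proposition \ref{B1B2B3}, i.e. onto the graphs $v=\pm(1-\sigma^2)\sqrt{1-\bigl((u\mp 2\sigma)/(1+\sigma^2)\bigr)^2}$; a one-line computation shows the tangent segment maps onto the parabolic arc $v=(1+\sigma^2)-u^2/\bigl(4(1+\sigma^2)\bigr)$. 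Keeping track of the parameter ranges — for the arc of the first ellipse $2\theta$ runs from $0$ to $2\arccos\frac{1+\sigma}{\sqrt{2(1+\sigma^2)}}$, hence $u=\Re z^2$ from $(1+\sigma)^2$ down to $4\sigma$, and similarly for the other pieces — one checks that the three images join continuously at $u=\pm 4\sigma$ and terminate at $u=\pm(1+\sigma)^2$, so that together with their mirror images they trace exactly the curve $\set{u+if(u)}\cup\set{u-if(u)}$, $u\in[-(1+\sigma)^2,(1+\sigma)^2]$, with $f$ as in the statement.

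Finally, since $f$ is continuous, positive on the open interval and zero at the two endpoints, this curve is a Jordan curve enclosing precisely the region $\set{u+iv:\abs{v}\le f(u),\,\abs{u}\le(1+\sigma)^2}$; as $N(A)^2$ is star-shaped about the interior point $0$ it is the filled region bounded by its own boundary, so the two sets coincide. The only real work is in the middle paragraph: matching each boundary piece of $N(A)$ to the correct $u$-interval and verifying that the squared pieces fit together without gaps or overlaps at $u=\pm4\sigma$ and $u=\pm(1+\sigma)^2$; one should also note that when $\sigma=1$ the two elliptic arcs degenerate to points and only the parabolic piece remains, in agreement with the stated formula (where then $-4\sigma=-(1+\sigma)^2$).
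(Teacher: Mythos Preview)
Your proposal is correct and follows essentially the same route as the paper: the paper's proof is the single line ``Using $\Re(z^2) = (\Re z)^2 - (\Im z)^2$ and $\Im(z^2) = 2\Re z\Im z$ for $z \in \C$, this follows from Proposition~\ref{nrFZ} by another elementary computation'', and what you have written is a fleshed-out version of exactly that computation. Your star-shaped/radial-function argument to justify that $\partial N(A)^2$ is precisely the squared image of $\partial N(A)\cap\{\Re z\ge 0\}$ is a nice explicit touch that the paper leaves implicit.
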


\begin{proof}
Using $\Re(z^2) = (\Re z)^2 - (\Im z)^2$ and $\Im(z^2) = 2\Re z\Im z$ for $z \in \C$, this follows from Proposition \ref{nrFZ} by another elementary computation.
\end{proof}

\begin{prop} \label{nrFZ^2}
Let $\sigma \in (0,1]$, $U_{-1} = \left\{1\right\}$, $U_0 = \left\{0\right\}$, $U_1 = \left\{\pm \sigma\right\}$ and $A \in \PsiE(U_{-1},U_0,U_1)$. Then
\[N(A^2) = \set{x + iy \in \C : -g(x) \leq y \leq g(x), -(1+\sigma)^2 \leq x \leq (1+\sigma)^2},\] where $g \from [-(1+\sigma)^2,(1+\sigma)^2] \to \R$ is given by
\[g(x) = \begin{cases} (1-\sigma^2)\sqrt{1-(\frac{x+2\sigma}{1+\sigma^2})^2} & \text{for } x \in \left[-(1+\sigma)^2,-2\sigma - \sigma\frac{(1+\sigma^2)^2}{1+\sigma^4}\right),\\
\frac{(1+\sigma^2)^2}{\sqrt{1+\sigma^2+\sigma^4}} + \frac{\sigma}{\sqrt{1+\sigma^2+\sigma^4}}x & \text{for } x \in \left[-2\sigma - \sigma\frac{(1+\sigma^2)^2}{1+\sigma^4},-\sigma\right),\\
\sqrt{(1+\sigma^2)^2 - x^2} & \text{for } x \in \left[-\sigma,\sigma\right],\\
\frac{(1+\sigma^2)^2}{\sqrt{1+\sigma^2+\sigma^4}} - \frac{\sigma}{\sqrt{1+\sigma^2+\sigma^4}}x & \text{for } x \in \left[\sigma,2\sigma + \sigma\frac{(1+\sigma^2)^2}{1+\sigma^4}\right),\\
(1-\sigma^2)\sqrt{1-(\frac{x-2\sigma}{1+\sigma^2})^2} & \text{for } x \in \left[2\sigma + \sigma\frac{(1+\sigma^2)^2}{1+\sigma^4},(1+\sigma)^2\right).\end{cases}\]
\end{prop}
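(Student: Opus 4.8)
The plan is to read off $N(A^2)$ directly from Theorem~\ref{N(A^2)} and Proposition~\ref{B1B2B3}. Since $A\in\PsiE(U_{-1},U_0,U_1)$, Theorem~\ref{N(A^2)} gives $N(A^2)=\conv\big(N(B_1^2)\cup N(B_2^2)\cup N(B_3^2)\big)$, and by Proposition~\ref{B1B2B3} the three sets are explicit planar convex bodies: $N(B_2^2)$ is the closed disk of radius $1+\sigma^2$ centred at $0$, while $N(B_1^2)$ and $N(B_3^2)$ are the filled ellipses with horizontal semi-axis $1+\sigma^2$ and vertical semi-axis $1-\sigma^2$ centred at $2\sigma$ and $-2\sigma$, respectively. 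So the whole task reduces to describing the convex hull $K$ of these three bodies as a region of the form $-g(x)\le y\le g(x)$. In particular the rightmost point of $K$ is the rightmost point of $N(B_1^2)$, namely $2\sigma+(1+\sigma^2)=(1+\sigma)^2$, which (with the symmetry below) pins down the domain $[-(1+\sigma)^2,(1+\sigma)^2]$ of $g$.

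All three bodies are symmetric about the real axis, so $K$ is too, hence $K=\{x+iy:-g(x)\le y\le g(x)\}$ with $g(x)=\max\{\Im z:z\in K,\ \Re z=x\}$; and since $N(B_3^2)=-N(B_1^2)$ and $N(B_2^2)=-N(B_2^2)$, $K$ is invariant under $z\mapsto -z$ and $g$ is even, so it suffices to compute $g$ on $[0,(1+\sigma)^2]$ and reflect. To do this I would use the support function: $K$ is compact and convex (Theorem~\ref{HauToep}), so it is the intersection of the half-planes $\{x\cos\phi+y\sin\phi\le r_\phi(A^2)\}$, and $r_\phi(A^2)=N(\phi)$ is given explicitly by Proposition~\ref{ValuesOfN} (cf.~\eqref{N}). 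On $[0,\tfrac{\pi}{2}]$ the maximum defining $N(\phi)$ is attained by $r_\phi(B_1^2)$ for $\phi\in[0,\phi^*]$ and by the constant $r_\phi(B_2^2)=1+\sigma^2$ for $\phi\in[\phi^*,\tfrac{\pi}{2}]$; consequently the part of $\partial K$ with outward normal in $(\phi^*,\tfrac{\pi}{2}]$ is the arc of the circle of radius $1+\sigma^2$, the part with outward normal in $[0,\phi^*)$ coincides with the corresponding arc of $\partial N(B_1^2)$ (because on that range $N$ equals the support function of $N(B_1^2)$, which it dominates everywhere, and the ellipse is smooth and strictly convex), and at the single corner $\phi=\phi^*$ of $N$ the boundary carries a straight edge of normal direction $\phi^*$ joining the two tangency points. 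In particular $N(B_3^2)$ contributes nothing to $\partial K$ over $x\ge 0$, which is exactly why only the disk and $N(B_1^2)$ appear there.

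The remaining work is to compute the two tangency points and check they give the stated breakpoints. From $\cos\phi^*=\sigma/(1+\sigma^2)$ one gets $\sin\phi^*=\sqrt{1+\sigma^2+\sigma^4}/(1+\sigma^2)$, so the tangency point on the circle is $(1+\sigma^2)(\cos\phi^*,\sin\phi^*)=(\sigma,\sqrt{1+\sigma^2+\sigma^4})$, which fixes the breakpoint $x=\sigma$ for the circular branch $g(x)=\sqrt{(1+\sigma^2)^2-x^2}$ (covering $[-\sigma,\sigma]$ after reflection). Imposing on $\partial N(B_1^2)$ that the outward normal point in direction $\phi^*$ and solving the resulting quadratic yields $\cos t=\sigma(1+\sigma^2)/(1+\sigma^4)$ for the ellipse parameter, whence the tangency point has abscissa $2\sigma+(1+\sigma^2)\cos t=2\sigma+\sigma(1+\sigma^2)^2/(1+\sigma^4)$; this is the breakpoint for the elliptic branch $g(x)=(1-\sigma^2)\sqrt{1-\big(\tfrac{x-2\sigma}{1+\sigma^2}\big)^2}$, valid up to $x=(1+\sigma)^2$. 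The straight edge between the two tangency points is the line through $(\sigma,\sqrt{1+\sigma^2+\sigma^4})$ of slope $-\cot\phi^*=-\sigma/\sqrt{1+\sigma^2+\sigma^4}$, i.e.~$g(x)=\big((1+\sigma^2)^2-\sigma x\big)/\sqrt{1+\sigma^2+\sigma^4}$. Applying $x\mapsto -x$ (equivalently, repeating the argument with $N(B_3^2)=-N(B_1^2)$ on $\phi\in[\tfrac{\pi}{2},\pi]$) produces the two mirrored branches for $x<0$, and the five pieces assemble exactly into $g$.

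I expect the main obstacle to be purely computational: verifying the algebraic identities in $\sigma$ that collapse the ellipse tangency abscissa to $2\sigma+\sigma(1+\sigma^2)^2/(1+\sigma^4)$ and confirm that the line through the circular tangency point with slope $-\sigma/\sqrt{1+\sigma^2+\sigma^4}$ is indeed tangent to the ellipse (so that the edge is genuinely a common tangent), together with the bookkeeping of matching the five boundary pieces to the five cases in the statement and checking continuity at the joins. None of this is conceptually deep once Theorem~\ref{N(A^2)}, Proposition~\ref{B1B2B3} and Proposition~\ref{ValuesOfN} are in hand.
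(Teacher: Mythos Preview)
Your approach is exactly the one the paper takes: invoke Theorem~\ref{N(A^2)} and Proposition~\ref{B1B2B3} (with Proposition~\ref{ValuesOfN} as an organizing tool) and then carry out the elementary computation of the convex hull of the disk and the two ellipses. The paper merely calls this ``yet another tedious but elementary computation'' without writing out the details, so your explicit determination of the tangency points, the common tangent line, and the breakpoints is in fact more than the paper provides; your algebra checks out.
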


\begin{proof}
This follows from Theorem \ref{N(A^2)} and Proposition \ref{B1B2B3} by yet another tedious but elementary computation.
\end{proof}

Thus $N(A)^2$ is surrounded by (parts of) two parabolas and two ellipses whereas $N(A^2)$ is surrounded by (parts of) a circle, two ellipses and four straight lines (see Figure \ref{sigma=0.5} for the case $\sigma = \frac{1}{2}$). It is readily seen that the ellipses are the same, respectively.

\begin{figure}
\begin{center}
\includegraphics[scale = 0.7]{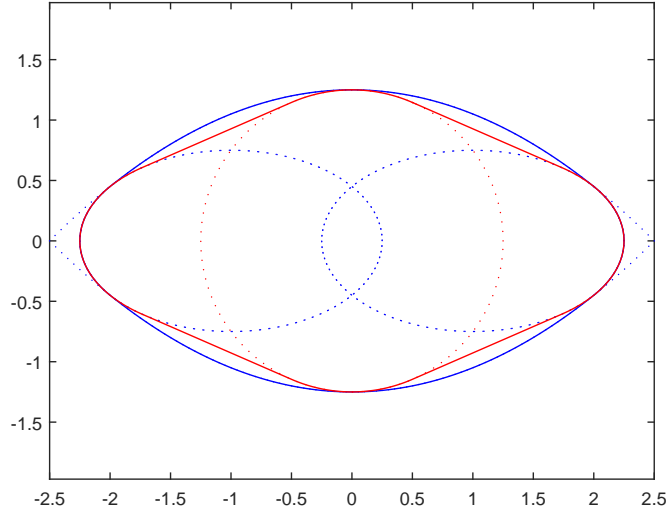}
\end{center}

\caption{The two parabolas and the two ellipses (blue, dotted), the circle (red, dotted), $N(A)^2$ (blue, solid) and $N(A^2)$ (red, solid) in the case $\sigma = \frac{1}{2}$.} \label{sigma=0.5}
\end{figure}

\begin{thm} \label{ProperSubset}
Let $\sigma \in (0,1]$, $U_{-1} = \left\{1\right\}$, $U_0 = \left\{0\right\}$, $U_1 = \left\{\pm \sigma\right\}$ and $A \in \PsiE(U_{-1},U_0,U_1)$. Then $N(A^2)$ is a proper subset of $N(A)^2$.
\end{thm}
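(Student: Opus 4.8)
The plan is to compare the two explicit regions supplied by Propositions~\ref{nrFZ^2} and~\ref{(nrFZ)^2}. Write $N(A)^2 = \{x+iy : |y|\le f(x),\ |x|\le (1+\sigma)^2\}$ and $N(A^2) = \{x+iy : |y|\le g(x),\ |x|\le (1+\sigma)^2\}$, where $f$ and $g$ are the continuous, even, nonnegative boundary functions of those propositions. Since both sets lie over the same interval and are symmetric about both axes, the inclusion $N(A^2)\subseteq N(A)^2$ is equivalent to the pointwise bound $g(x)\le f(x)$ for $x\in[0,(1+\sigma)^2]$, and it is proper precisely when $g\not\equiv f$. So I would split the work into (I) the pointwise inequality and (II) strictness.

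For (I) I would run through the pieces of $f$ and $g$. On $[0,\sigma]$ the function $g$ is the circular arc $\sqrt{(1+\sigma^2)^2-x^2}$ and $f$ is the parabola $1+\sigma^2-\tfrac{x^2}{4(1+\sigma^2)}$; both are positive, so squaring reduces $g\le f$ to $0\le \tfrac{x^2}{2}+\tfrac{x^4}{16(1+\sigma^2)^2}$, which is immediate. On $[4\sigma,(1+\sigma)^2]$ the two functions are literally the same elliptic arc (this uses $2\sigma+\sigma\tfrac{(1+\sigma^2)^2}{1+\sigma^4}\le 4\sigma$, i.e.\ $(1-\sigma^2)^2\ge 0$), so $g=f$ there. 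It remains to treat $x\in[\sigma,4\sigma]$, where $f$ is the concave parabola. Set $x_1:=2\sigma+\sigma\tfrac{(1+\sigma^2)^2}{1+\sigma^4}\in[\sigma,4\sigma]$. On $[x_1,4\sigma]$ both $f$ and $g$ take the value $\tfrac{(1-\sigma^2)^2}{1+\sigma^2}$ and the derivative $-\tfrac{2\sigma}{1+\sigma^2}$ at $x=4\sigma$; writing $\delta:=f-g$ one then has $\delta(4\sigma)=\delta'(4\sigma)=0$, so $\delta\ge 0$ on $[x_1,4\sigma]$ follows from $\delta''\ge 0$ there. On $[\sigma,x_1]$ the function $g$ is affine, so concavity of the parabola $f$ together with $g(\sigma)\le f(\sigma)$ (from the circular piece) and $g(x_1)\le f(x_1)$ (from the elliptic piece just handled) gives $g\le f$.

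The real obstacle is therefore the curvature comparison $\delta''\ge 0$ on $[x_1,4\sigma]$. With $u:=\tfrac{x-2\sigma}{1+\sigma^2}$ one computes $g''=-\tfrac{1-\sigma^2}{(1+\sigma^2)^2(1-u^2)^{3/2}}$ and $f''=-\tfrac{1}{2(1+\sigma^2)}$, so $\delta''\ge 0$ is equivalent to $(1-u^2)^{3/2}\le \tfrac{2(1-\sigma^2)}{1+\sigma^2}$ on the relevant range of $u$. The left side is decreasing in $u^2$ and $u$ is smallest at $x=x_1$, where $1-u^2=\tfrac{(1-\sigma^2)^2(1+\sigma^2+\sigma^4)}{(1+\sigma^4)^2}$; so everything reduces to the single explicit inequality $(1-\sigma^2)^2(1+\sigma^2)(1+\sigma^2+\sigma^4)^{3/2}\le 2(1+\sigma^4)^3$ for $\sigma\in(0,1]$, which I would verify directly (e.g.\ by setting $s=\sigma^2$ and using elementary estimates). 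For $\sigma=1$ the elliptic arcs degenerate and $g$ reduces on $[\sigma,4\sigma]$ to an affine piece, so this curvature step is needed only for $\sigma<1$.

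For (II), strictness, I would argue by shape rather than computation. By Proposition~\ref{nrFZ^2}, on the interval $(\sigma,x_1)$ of positive length the function $g$ is affine with nonzero slope $-\tfrac{\sigma}{\sqrt{1+\sigma^2+\sigma^4}}$ and stays positive, so $\partial N(A^2)$ contains a genuine (nondegenerate) line segment. If $N(A^2)$ equalled $N(A)^2$, that segment would lie on $\partial N(A)^2$; but by Proposition~\ref{(nrFZ)^2} the boundary of $N(A)^2$ is a union of arcs of parabolas and ellipses, none of which contains a line segment of positive length, a contradiction. Hence $N(A^2)\subsetneq N(A)^2$.
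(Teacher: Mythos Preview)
Your argument is correct and complete up to the final scalar inequality, which you state but do not verify. For the record it is easy: writing $s=\sigma^2$ and using $(1-s)(1+s+s^2)=1-s^3$, the inequality becomes $(1-s)^{1/2}(1+s)(1-s^3)^{3/2}\le 2(1+s^2)^3$, and since $(1-s)^{1/2}\le 1$, $(1-s^3)^{3/2}\le 1$ and $1+s\le 2\le 2(1+s^2)^3$, it holds trivially. With that, your piecewise comparison $g\le f$ goes through, and your strictness argument via the affine segment of $g$ is fine.

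Your route is genuinely different from the paper's. The paper first shows that $f$ is concave (checking $f''<0$), so $N(A)^2$ is convex; then, using Theorem~\ref{N(A^2)}, which says $N(A^2)=\conv\bigl(N(B_1^2)\cup N(B_2^2)\cup N(B_3^2)\bigr)$, it only needs to check that each of the three simple sets sits inside $N(A)^2$. The two ellipses $N(B_1^2)$ and $N(B_3^2)$ are exactly $N(B_1)^2$ and $N(B_3)^2$, so their inclusion follows from $N(B_i)\subseteq N(A)$ by squaring, without any elliptic-vs-parabola comparison; only the disk $N(B_2^2)$ has to be compared with $f$, and that is the circle-vs-parabola step you also do on $[0,\sigma]$ (plus a short circle-vs-ellipse check when $4\sigma<1+\sigma^2$). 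Thus the paper avoids entirely the curvature comparison on $[x_1,4\sigma]$ that forms the crux of your argument. What your approach buys is directness: you never invoke convexity of $N(A)^2$ or the decomposition from Theorem~\ref{N(A^2)}, working only with the explicit boundary functions. What the paper's approach buys is a cleaner structural reduction that sidesteps the second-derivative estimate altogether.
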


\begin{proof}
Let $f$ be as in Proposition \ref{(nrFZ)^2} and $B_1$, $B_2$, $B_3$ as in Theorem \ref{N(A^2)}. We will show that $f$ is concave, which implies that $N(A)^2$ is convex. It then remains to show that $N(A)^2$ contains $N(B_1^2)$, $N(B_2^2)$ and $N(B_3^2)$ by Theorem \ref{N(A^2)}. Using Corollary \ref{mtnrpe} and the parametrizations of $\partial N(B_1^2)$ and $\partial N(B_3^2)$ provided by Proposition \ref{B1B2B3}, it is easily seen that $N(B_1^2) = N(B_1)^2 \subset N(A)^2$ and $N(B_3^2) = N(B_3)^2 \subset N(A)^2$. It will thus suffice to consider $N(B_2^2)$.

Clearly, $f$ is continuously differentiable with
\[f'(x) = \begin{cases} 2\frac{x+2\sigma}{1+\sigma^2}\frac{1-\sigma^2}{1+\sigma^2}\left(1 - (\frac{x+2\sigma}{1+\sigma^2})^2\right)^{-1/2} & \text{for } x \in \left[-(1+\sigma)^2,-4\sigma\right),\\
-\frac{x}{2(1+\sigma^2)} & \text{for } x \in \left[-4\sigma,4\sigma\right],\\
2\frac{x-2\sigma}{1+\sigma^2}\frac{1-\sigma^2}{1+\sigma^2}\left(1 - (\frac{x-2\sigma}{1+\sigma^2})^2\right)^{-1/2} & \text{for } x \in \left(4\sigma,(1+\sigma)^2\right].\end{cases}\]
Moreover, $f'$ is piecewise continuously differentiable with
\[f''(x) = \begin{cases} -\frac{1 - \sigma^2}{1 + \sigma^2}\left(1 - (\frac{x+2\sigma}{1+\sigma^2})^2\right)^{-3/2} & \text{for } x \in \left[-(1+\sigma)^2,-4\sigma\right),\\
-\frac{1}{2(1+\sigma^2)} & \text{for } x \in \left(-4\sigma,4\sigma\right),\\
-\frac{1 - \sigma^2}{1 + \sigma^2}\left(1 - (\frac{x-2\sigma}{1+\sigma^2})^2\right)^{-3/2} & \text{for } x \in \left(4\sigma,(1+\sigma)^2\right].\end{cases}\]
Thus $f''(x) < 0$ for $x \in [-(1+\sigma)^2,(1+\sigma)^2] \setminus \set{-4\sigma,4\sigma}$, which implies that $f$ is concave.

Let $g \from [-(1+\sigma^2),1+\sigma^2] \to \R$ be defined by $\sqrt{(1+\sigma^2)^2 - x^2}$ so that
\[N(B_2^2) = \set{x + iy \in \C : -g(x) \leq y \leq g(x), -(1+\sigma^2) \leq x \leq 1+\sigma^2}\]
(see Proposition \ref{B1B2B3}). Assume first that $4\sigma \geq 1+\sigma^2$. Then
\begin{align*}
f(x) = g(x) &\Leftrightarrow 1 + \sigma^2 - \frac{x^2}{4(1+\sigma^2)} = \sqrt{(1+\sigma^2)^2 - x^2}\\
&\Leftrightarrow \left(1 + \sigma^2 - \frac{x^2}{4(1+\sigma^2)}\right)^2 = (1+\sigma^2)^2 - x^2\\
&\Leftrightarrow \frac{x^2}{2} + \frac{x^4}{16(1+\sigma^2)^2} = 0\\
&\Leftrightarrow x = 0
\end{align*}
for $x \in [-(1+\sigma^2),1+\sigma^2]$. Thus the graphs of $f$ and $g$ only intersect at $x = 0$. Since both $f$ and $g$ are continuous, it suffices to plug in some values (e.g.~$\pm (1+\sigma^2)$) to conclude $f \geq g$ and thus $N(B_2^2) \subseteq N(A)^2$. As we mentioned at the beginning of the proof, this implies $N(A^2) \subseteq N(A)^2$.

Now let $4\sigma < 1+\sigma^2$. For $x \in [-4\sigma,4\sigma]$, this is the same as above. For $x \in (4\sigma,1+\sigma^2]$ we have
\begin{align*}
f(x) = g(x) &\Leftrightarrow (1 - \sigma^2)\sqrt{1 - \left(\frac{x - 2\sigma}{1 + \sigma^2}\right)^2} = \sqrt{(1+\sigma^2)^2 - x^2}\\
&\Leftrightarrow (1 - \sigma^2)^2\left(1 - \left(\frac{x - 2\sigma}{1 + \sigma^2}\right)^2\right) = (1+\sigma^2)^2 - x^2.
\end{align*}
But this quadratic equation only has the solutions $x = 2\sigma$ and $x = -\frac{1 + \sigma^4}{\sigma}$, which are not contained in $(4\sigma,1+\sigma^2]$. Thus the graphs of $f$ and $g$ do not intersect in $(4\sigma,1+\sigma^2]$. Similarly, the graphs of $f$ and $g$ do not intersect in $[-(1+\sigma^2),-4\sigma)$. Since $f$ and $g$ are continuous, this again implies that $f \geq g$ and thus $N(A^2) \subseteq N(A)^2$.

It is now easily seen that this inclusion has to be proper.
\end{proof}

Since $N(A)$ is symmetric w.r.t.~the origin (cf. Proposition \ref{nrFZ}), Theorem \ref{ProperSubset} implies that $\sqrt{N(A^2)}$ is indeed a tighter upper bound to $\spec(A)$ than $N(A)$.

\begin{cor}
Let $\sigma \in (0,1]$, $U_{-1} = \left\{1\right\}$, $U_0 = \left\{0\right\}$, $U_1 = \left\{\pm \sigma\right\}$ and $A \in \PsiE(U_{-1},U_0,U_1)$. Then $\sqrt{N(A^2)}$ is a proper subset of $N(A)$.
\end{cor}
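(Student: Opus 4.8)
The plan is to deduce this directly from Theorem~\ref{ProperSubset} together with the origin-symmetry of $N(A)$ recorded in Proposition~\ref{nrFZ}. Recall the notation $\sqrt{N(A^2)} = \set{z \in \C : z^2 \in N(A^2)}$ introduced at the beginning of this section, and that $N(A)^2$ always means the image $\set{z^2 : z \in N(A)}$, described explicitly in Proposition~\ref{(nrFZ)^2}. The first step is the elementary monotonicity of the set-theoretic square root: since $N(A^2) \subseteq N(A)^2$ by Theorem~\ref{ProperSubset}, any $z$ with $z^2 \in N(A^2)$ satisfies $z^2 \in N(A)^2$, so
\[\sqrt{N(A^2)} \subseteq \set{z \in \C : z^2 \in N(A)^2} =: \sqrt{N(A)^2}.\]

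The second step is to identify $\sqrt{N(A)^2}$ with $N(A)$. The inclusion $N(A) \subseteq \sqrt{N(A)^2}$ is immediate from the definition of $N(A)^2$. For the reverse inclusion, suppose $z^2 \in N(A)^2$, i.e.~$z^2 = w^2$ for some $w \in N(A)$; then $z \in \set{w,-w}$, and since $N(A)$ is symmetric with respect to the origin by Proposition~\ref{nrFZ}, both $w$ and $-w$ lie in $N(A)$, whence $z \in N(A)$. Combining the two steps gives $\sqrt{N(A^2)} \subseteq N(A)$.

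Finally, to see that the inclusion is proper I would again invoke Theorem~\ref{ProperSubset}, which gives $N(A^2) \subsetneq N(A)^2$: choose $p \in N(A)^2 \setminus N(A^2)$ and write $p = z^2$ with $z \in N(A)$. Then $z \in N(A)$, but $z \notin \sqrt{N(A^2)}$ because $z^2 = p \notin N(A^2)$, so $z \in N(A) \setminus \sqrt{N(A^2)}$. There is really no obstacle here — all the substantive work has already been carried out in Theorem~\ref{ProperSubset} — and the only point that needs a moment's care is the use of the origin-symmetry of $N(A)$, without which $\sqrt{N(A)^2}$ could a priori be strictly larger than $N(A)$.
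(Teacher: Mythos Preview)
Your argument is correct and follows exactly the approach the paper indicates: the corollary is stated as an immediate consequence of Theorem~\ref{ProperSubset} together with the origin-symmetry of $N(A)$ from Proposition~\ref{nrFZ}, and you have simply spelled out the two-line deduction in full detail. There is nothing to add or correct.
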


\bigskip

\textbf{Acknowledgments.}
The author wants to thank the referee for his very detailed and accurate review and many useful comments. Moreover, the author appreciates the continuous support by Marko Lindner (TUHH).

\newpage

\bigskip

\noindent
{\bf Author's address:}

\medskip

\noindent
Raffael Hagger \hfill{\tt raffael.hagger@tuhh.de}\\
Institute of Mathematics\\
Hamburg University of Technology\\
Schwarzenbergstr. 95 E\\
D-21073 Hamburg\\
GERMANY

\end{document}